\numberwithin{equation}{section}
\theoremstyle{plain}
\newcommand{\lam}{\lambda}
\newcommand{\tU}{\tilde{U}}
\newcommand{\tu}{\tilde{u}}
\newcommand{\tv}{\tilde{V}}
\newtheorem{lemma}{Lemma}[section]
\newtheorem{theorem}{Theorem}[section]
\newtheorem{proposition}{Proposition}[section]
\newtheorem{corollary}{Corollary}[section]
\newtheorem{remark}{Remark}[section]
\def \E{\mathbb{E}}
\def\cred{\color{blue}}
\def\cblue{\color{blue}}
\def\@setcopyright{}
\def\serieslogo@{}
\title{Optimal Consumption with Loss Aversion and Reference to Past Spending Maximum}
\author{Xun LI\thanks{Department of Applied Mathematics, The Hong Kong Polytechnic University, Hung Hom, Kowloon, Hong Kong. Email:\texttt{li.xun@polyu.edu.hk}}
\and
Xiang YU\thanks{Department of Applied Mathematics, The Hong Kong Polytechnic University, Hung Hom, Kowloon, Hong Kong. Email:\texttt{xiang.yu@polyu.edu.hk}}
\and
Qinyi ZHANG\thanks{Department of Applied Mathematics, The Hong Kong Polytechnic University, Hung Hom, Kowloon, Hong Kong. Email:\texttt{qinyi-qy.zhang@connect.polyu.hk}}
}
\begin{document}

\date{\vspace{-0.7cm}}
\maketitle

\begin{abstract}
This paper studies an optimal consumption problem for a loss-averse agent with reference to past consumption maximum. To account for loss aversion on relative consumption, an S-shaped utility is adopted that measures the difference between the non-negative consumption rate and a fraction of the historical spending peak. We consider the concave envelope of the utility with respect to consumption, allowing us to focus on an auxiliary HJB variational inequality on the strength of concavification principle and dynamic programming arguments. By applying the dual transform and smooth-fit conditions, the auxiliary HJB variational inequality is solved in piecewise closed-form and some thresholds of the wealth variable are obtained. The optimal consumption and investment control can be derived in the piecewise feedback form. The rigorous verification proofs on optimality and concavification principle are provided. Some numerical sensitivity analysis and financial implications are also presented.
\ \\
\ \\
\textbf{Keywords:} Loss aversion, optimal relative consumption, path-dependent reference, concave envelope, piecewise feedback control\\
\ \\
\textbf{Mathematical Subject Classification (2020)}:  91B16, 91B42, 93E20, 49L20
\end{abstract}

\section{Introduction}\label{sec: intro}
Optimal portfolio-consumption via utility maximization has been one of the fundamental research topics in mathematical finance.
In the seminal works of Merton \cite{Mert1969RES, Mert1971JET}, the feedback optimal investment and consumption strategy is first derived by resorting to dynamic programming arguments and the solution of the associated HJB equation.
Since then, abundant influential results and methodology have been rapidly developed to accommodate more general financial market models, trading constraints and other factors in decision making.
Giving a complete list of references is beyond the scope of this paper. To partially explain the smooth consumption behavior, it has been suggested in the literature to take into account the past consumption decision in the measurement of the utility function.
By considering the relative consumption with respect to a reference that depends on the past consumption, the striking changes in consumption can essentially be ruled out from the optimal solution.
The widely used habit formation preference (see Abel \cite{Abel1990AER}, Constantinides \cite{Constantinides1990JPE}, Detemple and Zapatero \cite{DetempleZapatero1992MF}) recommends the utility maximization problem as
\begin{align*}
\sup_{(\pi,c)\in\mathcal{A}}\mathbb{E}\left[\int_0^{\infty}e^{-\rho t}U(c_t-Z_t)dt\right],
\end{align*}
where $(Z_t)_{t\geq 0}$ stands for the habit formation process taking the form $Z_t=ze^{-\alpha t}+\int_0^t \delta e^{\alpha(s-t)dt}c_sds$ with discount factors $\alpha, \delta\geq 0$ and the initial habit $z\geq 0$. That is, the satisfaction and risk aversion of the agent depend on the relative deviation of the current consumption from the weighted average of the past consumption integral. Along this direction, some recent developments can be found in Schroder and Skiadas \cite{SchroderSkiadas2002RFS}, Detemple and Karatzas \cite{DetempleKaratzas2003JET}, Englezos and Karatzas \cite{EnglezosKaratzas2009Sicon}, Yang and Yu \cite{Yu2022}, Yu \cite{Yu2015AoAP, Yu2017AoAP} and references therein.

One notable advantage of the habit formation preference is its linear dependence on consumption, which enables one to consider $\hat{c}_t=c_t-Z_t$ as an auxiliary control in a fictitious market model so that the path-dependence can be hidden. This insightful transform, first observed in \cite{SchroderSkiadas2002RFS}, reduces the complexity of the problem significantly. The martingale and duality approach can be applied by considering the adjusted martingale measure density process essentially based on Fubini theorem; see Detemple and Karatzas \cite{DetempleKaratzas2003JET} and Yu \cite{Yu2015AoAP, Yu2017AoAP}.

Another stream of research on the consumption reference focuses on its historical maximum level. Indeed, a large expenditure might signal the turning point of one's standard of living and is usually a decision after careful thought and consideration. Such historical high spending moments are consequent on adequate wealth accumulation and often give rise to some long term subsequent consumption decisions such as maintenance, repairs and upgrade.
To take into account the impact of the past consumption maximum, some previous studies incorporate the ratcheting or drawdown constraints that $c_t\geq \lambda H_t$ into the Merton optimal consumption problem where $H_t:=\max\{h, \sup_{s\leq t}c_s\}$ stands for the consumption running maximum process with the initial level $h$ and $\lam\in(0,1]$ captures the degree of adherence towards the reference level $H$. In particular, the case $\lambda=1$ accounts for the ratcheting consumption constraint as studied in Dybvig \cite{Dyb1995}, and the case $0<\lambda<1$ depicts the consumption drawdown constraint as discussed in Arun \cite{Arun2020arXiv} and Angoshtari et al. \cite{AngBay}. 

Meanwhile, it is also of great importance to understand the consumption behavior when the past spending maximum appears inside the utility. By taking the multiplicative form of reference, Guasoni et al. \cite{GuasoniHubermanR2020MFF} and Li et al. \cite{LYZ23} adopt the Cobb-Douglas utility with a zero discount factor and study the problem
\begin{align*}
\sup_{(\pi,c)\in\mathcal{A}, c\leq h}\mathbb{E}\left[\int_0^{\infty}(c_t/H_t^{\alpha})^p/p\ dt\right].
\end{align*}
Recently, Deng et al. \cite{DengLiPY2020arXiv} investigate an optimal consumption problem bearing the impact of the past spending maximum in the same form of the habit formation preference, which is defined by
\begin{align*}
\sup_{(\pi,c)\in\mathcal{A}, 0\leq c\leq h}\mathbb{E}\left[\int_0^{\infty}e^{-\rho t}U(c_t-\lambda H_t)dt\right].
\end{align*}
The exponential utility $U(x)=-e^{-\beta x}$ is considered therein and the non-negative consumption constraint $c_t\geq 0$ is enforced, yielding more regions for different consumption behavior. Although the running maximum term complicates the objective functional, the optimal consumption problems in both Guasoni et al. \cite{GuasoniHubermanR2020MFF} and Deng et al. \cite{DengLiPY2020arXiv} can be tackled successfully under the umbrella of dynamic programming.
The associated HJB variational inequalities and the feedback optimal controls can be solved in closed-form piecewisely in different regions, and some explicit and interpretable thresholds of the wealth are obtained.
One key feature in Guasoni et al. \cite{GuasoniHubermanR2020MFF} and Deng et al. \cite{DengLiPY2020arXiv} is to allow the agent to strategically consume below the reference level.
Nevertheless, from the behavioral finance perspective, one shortcoming in these studies is their incapability to distinguish agent's different risk aversion on the same-sized overperformance and falling behind with respect to the reference process. Instead, loss aversion depicts the agent's proclivity to prefer avoiding losses to acquiring equivalent gains, naturally leads to different left and right derivatives of the utility function at the reference point. In particular, it is an open problem how the loss aversion on consumption with respect to the past consumption peak may affect the optimal investment and consumption behavior.

The loss aversion with a reference point has been studied in behavioral finance predominantly on terminal wealth optimization, see among Berkelaar et al. \cite{BerklKouwbgPost2004RES}, Jin and Zhou \cite{JinZhou2008MF}, He and Zhou \cite{HeZhou2011MS, HeZhou2014QF}, He and Strub \cite{HeStrub2019Preprint}, He and Yang \cite{HeYang2019MF} and references therein. Only a handful of papers can be found to encode that the agent may hurt more when the consumption is falling below a reference, especially when the reference level is endogenously generated by past decisions. Recently, Curatola \cite{Curatola2017QREF} studies a utility maximization problem on consumption for a loss averse agent under an S-shaped utility when the reference is chosen as a specific integral of the past consumption process. Later, van Bilsen et al. \cite{BilsenLaevenN2020MS} consider a similar problem under a two-part utility when the reference process is defined as the conventional consumption habit formation process. By imposing some artificial lower bounds on consumption control, the martingale and duality approach together with the concavification principle can be employed in both papers.

By contrast, the present paper investigates the optimal consumption behavior of a loss-averse agent who feels differently when the consumption is over-performing and falling below
the past spending maximum. As the first attempt to combine the loss-aversion on relative consumption and the reference to historical consumption peak, the mathematical problem is formulated by
\begin{align*}
\sup_{(\pi,c)\in\mathcal{A}, ~0\leq c\leq h}\mathbb{E}\left[\int_0^{\infty}e^{-\rho t}U(c_t-\lambda H_t)dt\right],
\end{align*}
where $U(x)$ is described by the conventional two-part power utility (see Kahneman and Tversky \cite{KahnemanTversky2013Book}) that
\begin{align}\label{eq: utility}
U(x):=&\left\{\begin{aligned}
&\frac{x^{\beta_1}}{\beta_1},\quad   &  \mbox{if } x\geq 0, \\
&-k\frac{(-x)^{\beta_2}}{\beta_2},\quad & \mbox{if } x<0.
\end{aligned}\right.
\end{align}
Here, $k > 0$ stands for the loss aversion degree, and it is assumed in the present paper that $0<\beta_1,\beta_2<1$, which represent the risk aversion parameters over the gain domain $x\geq 0$ and the loss domain $x<0$, respectively. The utility is an S-shaped function on $\mathbb{R}$. The parameter $\lam\in(0,1]$ again reflects the degree of adherence towards the reference level $H$, which now affects the expected utility directly.

Our aim is to solve this stochastic control problem by dynamic programming arguments and the PDE approach. However, the non-concave utility introduces more challenges in solving the HJB variational inequality heuristically. We propose to focus on the realization utility $U(c-\lambda h)$ for each fixed $\lambda h$ and the control constraint $0\leq c\leq h$ and consider the concave envelope of $U(x-\lambda h)$ only with respect to the variable $0\leq x\leq h$ on the strength of concavification principle.
Similar to Deng et al. \cite{DengLiPY2020arXiv}, by considering both the wealth level $x$ and reference level $h$ as state variables, we can derive the auxiliary HJB variational inequality in the piecewise form based on the decomposition of the domain $(x,h)\in\mathbb{R}_+^2$ when the feedback optimal consumption: (i) equals 0; (ii) lies between 0 and past spending maximum; (iii) coincides with the past spending peak.
By utilizing the dual transformation only with respect to the wealth variable $x$ and treating the reference variable $h$ as a parameter, we arrive at a piecewise dual ODE problem in different regions. Together with some intrinsic boundary conditions and smooth-fit conditions, we are able to solve the piecewise ODE problem.

As opposed to the exponential utility in Deng et al. \cite{DengLiPY2020arXiv}, the concave envelope function in the current setting has no explicit form, which complicates the smooth fit arguments significantly. As a direct consequence, all coefficient functions in the solution to the dual ODE contain are implicit functions of the reference variable $h$.
After the inverse transform, all boundary curves separating different regions can still be expressed as thresholds of the wealth variable, albeit implicitly. The feedback optimal controls can also be derived analytically in terms of $x$ and $h$, in which the optimal consumption may exhibit jumps.
On account of the specific feedback form of optimal consumption in each region, the verification theorem on the optimality and concavification principle can be rigorously proved, giving the desired equivalence between the original problem and the auxiliary one using the concave envelope of the realization utility. When $U(x)$ in \eqref{eq: utility} is an S-shaped utility, it is interesting to observe that the optimal consumption $c_t^*$ exhibits a jump and it is either zero or above the reference $\lambda H^*_t$. That is, because the agent is risk-loving in the loss domain, she can never tolerate any positive consumption below the reference when the wealth is not sufficient and prefers to stop consumption right away to accumulate more capital from the financial market to sustain her future high consumption.

The rest of the paper is organized as follows.
Section \ref{sec: formulate} introduces the market model and the optimal consumption problem under the two-part utility with reference to past spending maximum. By considering the concave envelope of the utility function, we transform the original problem into an equivalent control problem.
In Section \ref{sec: main}, we solve the auxiliary HJB variational inequality.
The optimal controls for the original problem are obtained in piecewise feedback form across different regions and all boundary curves are derived analytically.
Section \ref{sec: long_run} presents some quantitative properties of the optimal controls and some numerical sensitivity analysis and their financial implications. In Section \ref{sec: proof}, we prove the verification theorem on optimality and concavification principle as well as some auxiliary results in previous sections.

\section{Model Setup and Problem Formulation}\label{sec: formulate}

\subsection{Market Model and Preference}
Let the filtered probability space $(\Omega, \mathcal{F}, \mathbb{F}, \mathbb{P})$ with $\mathbb{F}=(\mathcal{F}_t)_{t\geq 0}$ satisfy the usual conditions. The financial market consists of one riskless asset and one risky asset. The riskless asset price follows $dB_t=rB_tdt$,
where $r>0$ is the interest rate.
The risky asset price is governed by the SDE
\begin{equation}
dS_t=S_t\mu dt+S_t\sigma dW_t,\quad t\geq 0\nonumber
\end{equation}
where $W$ is a $\mathbb{F}$-adapted Brownian motion, and $\mu\in\mathbb{R}$ and $\sigma>0$ stand for the drift and volatility.
It is assumed that $\mu>r$ and the Sharpe ratio is denoted by $\kappa:= \frac{\mu-r}{\sigma} > 0$.

Let $(\pi_t)_{t\geq 0}$ be the amount of wealth that the agent allocates in the risky asset, and let $(c_t)_{t\geq 0}$ represent the consumption rate. The self-financing wealth process $(X_t)_{t\geq 0}$ satisfies
\begin{equation*}
dX_t= \left(rX_t + \pi_t(\mu -r) - c_t\right)dt+\pi_t\sigma dW_t, \quad t\geq 0,
\end{equation*}
with the initial wealth $X_0=x\geq 0$. The control pair $(c, \pi)$ is said to be \textit{admissible} if $c$ is $\mathbb{F}$-predictable and non-negative, $\pi$ is $\mathbb{F}$-progressively measurable, both satisfy the integrability condition $\int_0^{T} (c_t+\pi_t^2)dt<\infty$ a.s. for any $T>0$ as well as the no bankruptcy condition holds that $X_t\geq 0$ a.s. for $t\geq 0$. We use $\mathcal{A}(x)$ to denote the set of admissible controls $(c, \pi)$.

It is assumed in the present paper that the agent is loss averse on relative consumption in the sense that the agent suffers more from a reduction in consumption than would benefit from an increase of the same size. The reference level is chosen as a fraction of the consumption running maximum process $\lambda H_t$,
where $\lam\in(0,1)$ depicts the degree towards the reference,  $H_t:=\max{\{h,\ \sup_{s\leq t} c_s\}}$ denotes the past spending maximum, and $H_0=h\geq 0$ is the initial reference level. The utility maximization problem is defined by
\begin{equation}\label{eq: primalvalue}
u(x, h)=\sup_{(\pi, c)\in\mathcal{A}(x)}\mathbb{E}\left[\int_0^{\infty} e^{-\rho t}U(c_t-\lam H_t)dt\right],
\end{equation}
where $U(\cdot)$ is the S-shaped utility defined in \eqref{eq: utility} with different risk-aversion parameters $\beta_1$ and $\beta_2$ on gains and losses of the relative consumption, and $\rho>0$ is the subjective discount rate to guarantee the convergence of the value function.

Two main challenges in solving \eqref{eq: primalvalue} are the path-dependence of $(H_t)_{t\geq 0}$ on the control $(c_t)_{t\geq 0}$ and the non-concavity of the S-shaped utility  $U(\cdot)$.
As a remedy, we propose to consider the concave envelope of the realization utility on consumption by first assuming the validity of concavification principle (see, for example, Reichlin \cite{Reichlin2013MFE} and Dong and Zheng \cite{DongZheng2020EJOR}).
Later, we plan to characterize the optimal control under the concave envelope function and then verify that the optimal control also attains the value function in the original problem, i.e., the concavification principle indeed holds.
To be precise, for each fixed $h$, let us consider $\tU(c,h)$ as the concave envelope of $U(c-\lam h)$ with respect to the variable $c\in[0, h]$ on a constrained domain.
That is, for each fixed $h\geq 0$, let $\tU(\cdot, h)$ be the smallest concave function on $[0,h]$ such that $\tU(c,h)\geq U(c,h)$ holds for all $c\in[0,h]$.
\subsection{Concave envelope of the realization utility}\label{sec: ce}


To emphasize the concave envelope only with respect to $c\in[0,h]$ while keeping the variable $h$ fixed, let us consider an equivalent bivariate function
\begin{equation*}
U^*(c,h) := U(c-\lam h),
\end{equation*}
on the domain $\{(c,h)\in\mathbb{R}^2: c\in [0,h]\}$.
Define $U_1^*(c,h) := \frac{1}{\beta_1}(c-\lam h)^{\beta_1}$ and $U_2^*(c,h) := -\frac{k}{\beta_2}(\lam h -c)^{\beta_2}$ and denote ${U_1^*}'(c,h) := \frac{\partial U_1^*}{\partial c}(c,h)$ and ${U_2^*}'(c,h) := \frac{\partial U_2^*}{\partial c}(c,h)$. Note that ${U_1^*}'(c,h) \rightarrow +\infty $ as $c\rightarrow(\lambda h)_+$. As ${U_2^*}'(c,h) \rightarrow +\infty$ when $c\rightarrow(\lambda h)_-$, we have two different subcases:




\textbf{Subcase (i)}: If $U_1^*(h, h) + U_2^*(0, h) - h {U_1^*}'(h,h) > 0$, there exists a unique solution $z(h)\in(\lam h,h)$ to the equation
\begin{equation}\label{eq: beta<1,tagent z}
U_1^*(z(h), h) + U_2^*(0, h) - z(h){U_1^*}'(z(h), h) = 0.
\end{equation}
That is, $z(h)$ is the tangent point of the straight line at $(0, -U_2^*(0, h))$ to the curve $U_1^*(c, h)$ for $c\geq \lam h$. Note that $z(h)$ does not admit an explicit expression in this subcase.

\textbf{Subcase (ii)}: If $U_1^*(h, h) + U_2^*(0, h) - h {U_1^*}'(h,h) \leq 0$,
we simply let $z(h)=h$. The concave envelope of $U^*(c,h)$ on $[0, h]$ corresponds to the straight line through two points $(0, U_2^*(0, h))$ and $(h, U_1^*(h,h))$.

\begin{remark}\label{paraRm}
The condition of \textbf{Subcase (ii)} is fulfilled if and only if $h$ and model parameters satisfy one of the following three conditions that
$$\begin{array}{ll}
(\mbox{S1})  &\beta_2>\beta_1\geq1-\lam,~ \mathrm{and}~ h \leq ( \frac{\beta_2(1-\lam)^{\beta_1-1}(\beta_1+\lam-1)}{\beta_1k\lam^{\beta_2}} )^{\frac{1}{\beta_2-\beta_1}},\qquad \\
(\mbox{S2})  &\beta_1\geq1-\lam,~ \beta_1 > \beta_2,~ \mathrm{and}~ h \geq ( \frac{\beta_2(1-\lam)^{\beta_1-1}(\beta_1+\lam-1)}{\beta_1k\lam^{\beta_2}} )^{\frac{1}{\beta_2-\beta_1}}, \\
(\mbox{S3})  &\beta_2=\beta_1\geq 1-\lam, ~1\leq \frac{(1-\lam)^{\beta_1-1}(\beta_1+\lam-1)}{k\lam^{\beta_2}} , ~\mbox{and } h\geq0.
\end{array}$$

\end{remark}

Similar to Dong and Zheng \cite{DongZheng2020EJOR}, we can define the concave envelope of $U^*(c,h)$ for $c\in [0, h]$ by
\begin{equation}\label{eq: ce_beta<1}
\tU(c,h) = \begin{cases}
U_2^*(0, h) + \frac{U_1^*(z(h), h) - U_2^*(0, h)}{z(h)}c, & \mbox{if } 0\leq c < z(h), \\
U_1^*(c, h), & \mbox{if } z(h) \leq c \leq h.
\end{cases}
\end{equation}

\begin{figure}[htbp]\label{fig-1}
\centering
\includegraphics[height=2.2in]{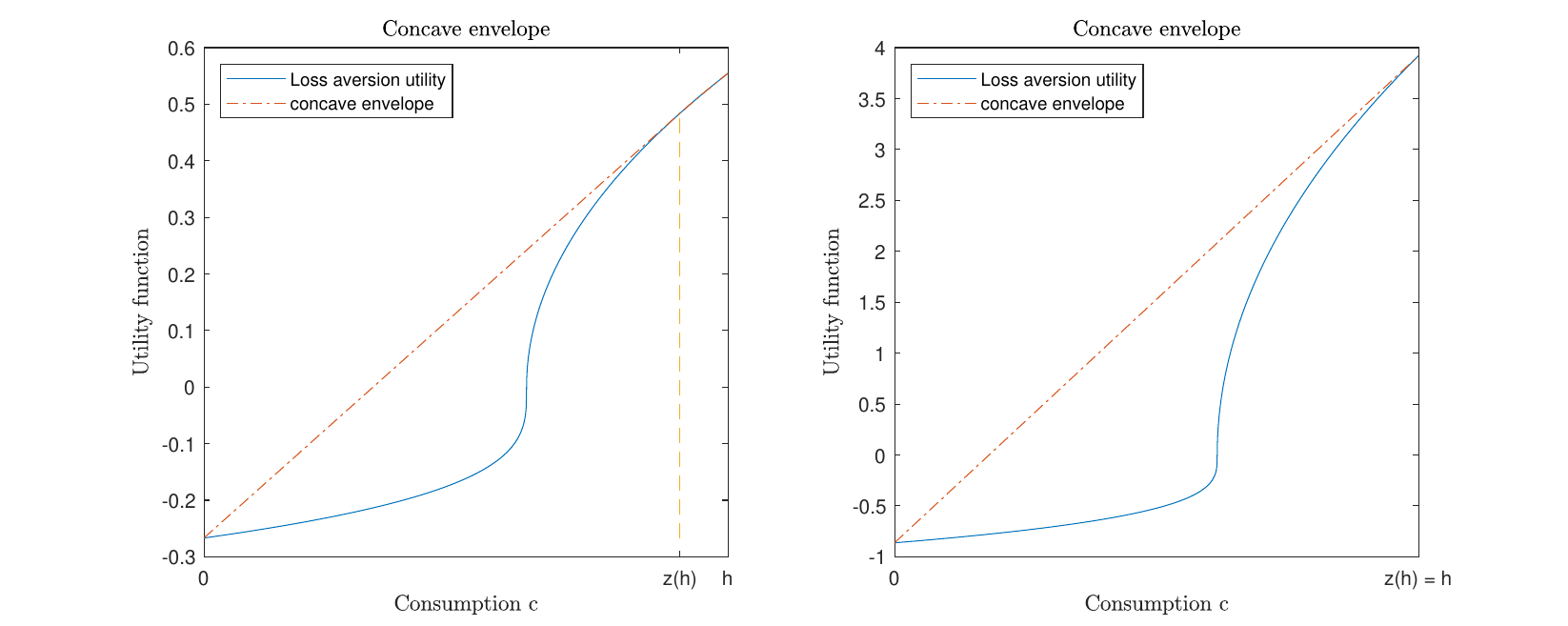}
\caption{\small{Concave envelopes when $0<\beta_2<1$: (left panel) the subcase (i) when $z(h)\neq h$; (right panel) the subcase (ii) when $z(h)= h$.}}
\end{figure}

Figure 1 illustrates two subcases of the concave envelope of the S-shaped utility $U(c,h)$. We stress that the function $\tU(c,h)$ is implicit in $h$ as $z(h)$ is an implicit function in general. To simplify the future presentation, let us also define
\begin{equation}\label{eq: def_w}
w(h) := z(h) - \lam h.
\end{equation}
Hence,  if $z(h)=h$, then $w(h) = (1-\lam)h$, i.e., $z(h) = \lam h + w(h)$.

\subsection{Equivalent problem}
We now consider the auxiliary stochastic control problem
\begin{equation}\label{eq: concavevalue}
\tilde{u}(x, h)=\sup_{(\pi, c)\in\mathcal{A}(x)}\mathbb{E}\left[\int_0^{\infty} e^{-\rho t}\tilde{U}(c_t,H_t)dt\right].
\end{equation}
The equivalence between problems \eqref{eq: primalvalue} and \eqref{eq: concavevalue} is given in the next proposition. Its proof is deferred to Section \ref{sec: proof_prop_samevalue}  after we first establish the verification proof on optimality.

\begin{proposition}[Concavification Principle]\label{prop: samevalue}
Two problems \eqref{eq: primalvalue} and \eqref{eq: concavevalue} admit the same optimal control $(\pi_t^*, c_t^*)$ so that two value functions coincide, i.e., $u(x,h) = \tu(x,h)$ for any $(x,h)\in\mathbb{R}_+\times\mathbb{R}_+$.
\end{proposition}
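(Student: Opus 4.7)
The plan is to establish the two inequalities $\tilde{u}(x,h) \geq u(x,h)$ and $\tilde{u}(x,h) \leq u(x,h)$ separately, with the second one being the substantive part.

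First I would observe that by construction of the concave envelope, $\tilde{U}(c, h) \geq U(c - \lambda h)$ for every admissible pair $(c, h)$ with $0 \leq c \leq h$. Since the admissible set $\mathcal{A}(x)$ is the same for both problems, and the running maximum process $H_t = \max\{h, \sup_{s \leq t} c_s\}$ depends only on the control $c$ (not on which utility is used), plugging any admissible control into \eqref{eq: concavevalue} dominates its value in \eqref{eq: primalvalue} pointwise. Taking the supremum on both sides yields $\tilde{u}(x,h) \geq u(x,h)$ immediately; this step is routine.

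The reverse inequality is the crux. The strategy is to take the candidate optimal control $(\pi_t^*, c_t^*)$ for the auxiliary problem \eqref{eq: concavevalue} — constructed explicitly in the piecewise feedback form derived in Section \ref{sec: main} and shown to be optimal by the verification theorem that precedes this proposition in Section \ref{sec: proof} — and argue that along its trajectory the concave envelope coincides with the original utility, i.e.,
\begin{equation*}
\tilde{U}(c_t^*, H_t^*) = U(c_t^* - \lambda H_t^*) \quad \text{a.s. for a.e. } t \geq 0.
\end{equation*}
By the explicit form \eqref{eq: ce_beta<1}, this identity holds precisely on the set $\{c = 0\} \cup \{z(h) \leq c \leq h\}$, while on the open interval $(0, z(h))$ the concave envelope lies strictly above $U(c - \lambda h)$. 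Thus the task reduces to showing $c_t^*(\omega) \in \{0\} \cup [z(H_t^*(\omega)), H_t^*(\omega)]$ almost surely.

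This set-membership assertion is exactly what the piecewise structure of the feedback optimal consumption established in Section \ref{sec: main} delivers: in the region where the wealth is insufficient the optimal control jumps to zero, and otherwise it lies in the gain domain $[z(H_t^*), H_t^*]$, consistently with the informal remark in the introduction that $c_t^*$ is either zero or above the reference $\lambda H_t^*$. I would therefore read off the required dichotomy directly from the form of $c_t^*$ and the explicit location of the boundary curves separating the wealth regions. Once this is in hand, the auxiliary optimizer $(\pi_t^*, c_t^*)$ is admissible for \eqref{eq: primalvalue} and delivers value $\tilde{u}(x,h)$, so $u(x,h) \geq \tilde{u}(x,h)$ and the two value functions coincide with a common optimizer.

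The main obstacle is the set-membership step: we must rule out the possibility that the auxiliary problem's optimal consumption ever lands in the strict interior $(0, z(H_t^*))$ of the non-concavity region, and this is not automatic from concavity alone — it relies on the detailed piecewise description and the smooth-fit conditions of Section \ref{sec: main}, together with the fact that $z(h)$ (in Subcase (i)) is defined so that the tangent line construction makes values in $(0, z(h))$ strictly suboptimal. I would handle this by case analysis across the wealth-reference regions, invoking the explicit feedback formulas and the threshold curves derived earlier, rather than by any abstract convex-analytic argument.
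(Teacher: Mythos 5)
Your proposal is correct and follows essentially the same route as the paper: the easy inequality $\tu\geq u$ from $\tU\geq U$, and the reverse inequality by verifying region-by-region (using the explicit feedback forms and the threshold curves $x_{\mathrm{zero}}, x_{\mathrm{aggr}}, x_{\mathrm{lavs}}$) that the auxiliary optimizer satisfies $c_t^*\in\{0\}\cup[z(H_t^*),H_t^*]$, where the concave envelope coincides with $U$. The paper's proof in Section \ref{sec: proof_prop_samevalue} carries out exactly this case analysis, e.g.\ noting that in the middle region $z(H_t^*)\leq c_t^*=\lam H_t^*+(f(X_t^*,H_t^*))^{\frac{1}{\beta_1-1}}\leq H_t^*$.
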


For problem \eqref{eq: concavevalue}, we can derive the auxiliary HJB variational inequality that
%
\begin{equation}\label{eq: HJB_eqn}
\left\{\begin{array}{rcl}
\underset{c\in [0,h], \pi\in\mathbb{R}}{\sup}\left[ -\rho \tu+\tu_x(rx+\pi(\mu-r)-c)+\frac{1}{2}\sigma^2\pi^2 \tu_{xx}+\tilde{U}(c,h)\right] &=& 0,  \\
\tu_h(x,h)  & \leq & 0,
\end{array}\right.
\end{equation}
for $x\geq 0$ and $h \geq 0$. The free boundary condition $\tilde{u}_h(x, h) = 0$ will be specified later. Our goal is to find the optimal feedback control $c^*(x,h)$ and $\pi^*(x,h)$. If $\tu(x,\cdot)$ is $C^2$ in $x$, the first order condition gives the optimal portfolio in a feedback form by $\pi^{\ast}(x,h)=-\frac{\mu-r}{\sigma^2}\frac{\tu_x}{\tu_{xx}}$. This implies that the HJB variational inequality \eqref{eq: HJB_eqn} can be simplified to
\begin{equation} \label{eq: HJB_main}
\sup_{c\in [0,h]} \left[ \tU(c,h)- c \tu_x \right] -\rho \tu + rx \tu_x -\frac{\kappa^2}{2}\frac{\tu_x^2}{\tu_{xx}} = 0,\ \ \text{and}\ \ \tu_h\leq 0,\ \ \forall x\geq0,h\geq 0.
\end{equation}

\section{Derivation of the Solution}\label{sec: main}
For ease of presentation and technical convenience, we only consider the case that $\rho = r>0$ in the present paper.
Computations in general cases that $\rho\neq r$ and $r=0$ can be conducted similarly. However, some additional sufficient assumptions on model parameters are needed to facilitate the proofs of the verification theorem. Given the implicit concave envelope in \eqref{eq: ce_beta<1}, we can still solve the HJB variational inequality in the analytical form. In particular, we plan to characterize some thresholds (depending on $h$) for the wealth level $x$ such that the auxiliary value function, the optimal portfolio and consumption can be expressed analytically in each region.




Let us first introduce the boundary curves $y_1(h) \geq y_2(h) > y_3(h)$ by
\begin{equation}\label{eq: y_curves_beta<1}
\begin{aligned}
y_1(h) &:= \frac{k(\lam h)^{\beta_2}}{\beta_2z(h)} + \frac{ w(h)^{\beta_1}}{\beta_1z(h)} , \\
y_2(h) &:= \min\bigg( y_1(h), \big((1-\lam)h\big)^{\beta_1-1} \bigg), \\
y_3(h) &:= (1-\lam)^{\beta_1}h^{\beta_1-1},
\end{aligned}
\end{equation}
where $z(h)$ and $w(h)$ are defined in Section \ref{sec: ce}.
Here, $y_1(h) $ and $y_2(h)$ are derivatives of the concave envelope $\tU(c,h)$ at $c=0$ and $c=h$ respectively, which are used to simplify the expression of $\sup_{c\in [0, h]} [ \tU(c,h)- c \tu_x ]$ when the maximum occurs at $c=0$ and $c=h$. We also use $y_3(h)$ to describe the free boundary curve $\tu_h=0$.
Note that if $w(h) \neq (1-\lam)h$, we have $y_1(h) > y_2(h) = ((1-\lam)h)^{\beta_1-1} > (1-\lam)^{\beta_1}h^{\beta_1-1} = y_3(h)$ as $0<\lam<1$ by \eqref{eq: beta<1,tagent z};
on the other hand, if $w(h) = (1-\lam)h$, we have $z(h) = h$, yielding that
$y_1(h) = y_2(h) = \frac{k(\lam h)^{\beta_2}}{\beta_2z(h)} + \frac{ w(h)^{\beta_1}}{\beta_1z(h)} > \frac{w(h)^{\beta_1}}{\beta_1z(h)}
= \frac{1}{\beta_1}(1-\lam)^{\beta_1}h^{\beta_1-1} > (1-\lam)^{\beta_1}h^{\beta_1-1} = y_3(h)$ as $0<\beta_1<1$.

Similar to Deng et al. \cite{DengLiPY2020arXiv}, we can heuristically decompose the domain into several regions based on the first order condition of $c$ and express the HJB equation \eqref{eq: HJB_main} piecewisely. However, the concave envelope of the S-shaped utility complicates the computations here, in which the previous $y_i(h)$ in \eqref{eq: y_curves_beta<1}, $i=1,2,3$, serve as the boundaries of these regions. We can then separate the following regions:

\noindent \emph{Region I}: on the set $\mathcal{R}_1 = \{(x,h) \in \mathbb{R}_+^2: \tu_x(x,h) > y_1(h)\},$
$\tU(c, h) - c\tu_x$ is decreasing in $c$, implying that $c^*=0$ and the HJB equation \eqref{eq: HJB_main} becomes
\begin{equation}\label{eq: HJB_beta<1_1}
-\frac{k}{\beta_2}(\lam h)^{\beta_2} - r\tu + rx\tu_x - \frac{\kappa^2\tu_x^2}{2\tu_{xx}} = 0, ~\mathrm{and}~ \tu_h\leq0.
\end{equation}

\noindent \emph{Region II}: on the set
$\mathcal{R}_2 = \{(x,h)\in \mathbb{R}_+^2 : y_2(h) \leq \tu_x(x,h) \leq y_1(h)\},$
$\tU(c, h) - c\tu_x$ is increasing on $[0, z(h)]$ and concave on $[z(h), h]$, implying that $c^*=\lam h + \tu_x^{\frac1{\beta_1-1}} \geq z(h)$ and the HJB equation \eqref{eq: HJB_main} becomes
\begin{equation}\label{eq: HJB_beta<1_2}
\frac{1-\beta_1}{\beta_1}\tu_x^{\frac{\beta_1}{\beta_1-1}} - \lam h \tu_x
- r\tu + rx\tu_x - \frac{\kappa^2\tu_x^2}{2\tu_{xx}} = 0, ~\mathrm{and}~ \tu_h\leq0.
\end{equation}

\noindent \emph{Region III}: on the set $\mathcal{R}_3  = \{(x,h) \in \mathbb{R}_+ \times \mathbb{R}_+ : \tu_x(x,h) < y_2(h)\}$, $\tU(c) - c\tu_x$ is increasing in $c$ on $[0,h]$, implying that $c^*=h$.
To distinguish whether the optimal consumption $c_t^*$ updates the past maximum process $H_t^*$ in this region, one can heuristically substitute $h=c$ in \eqref{eq: HJB_main} and apply the first order condition to $\tU(c, c) - c\tu_x$ with respect to $c$ and derive the auxiliary singular control $\hat{c}(x) := \tu_x^{\frac1{\beta_1-1}}(1-\lam)^{-\frac{\beta_1}{\beta_1-1}}$.
We then need to split \emph{Region III} further into three subsets:

\noindent \emph{Region III-(i)}: on the set
$\mathcal{D}_1= \{(x,h) \in \mathbb{R}_+ \times \mathbb{R}_+ : y_3(h) < \tu_x < y_2(h)\}$,
it is easy to see a contradiction that $\hat{c}(x) < h$, and therefore the optimal consumption $c_t^*$ does not equal to $\hat{c}$ and
we should follow the previous feedback form $c^*(x,h) = h$, in which $h$ is a previously attained maximum level. The HJB variational inequality is written as
\begin{equation}\label{eq: HJB_beta<1_3}
\frac1{\beta_1}((1-\lam)h)^{\beta_1} - h\tu_x
- r\tu + rx\tu_x - \frac{\kappa^2\tu_x^2}{2\tu_{xx}} = 0, ~\mathrm{and}~ \tu_h\leq0.
\end{equation}

\noindent \emph{Region III-(ii)}: on the set $D_2 := \{ (x,h)\in\mathbb{R}_+ \times \mathbb{R}_+ : \tu_x(x,h) = y_3(h)\}$, we get $\hat{c}(x) = h$ and the feedback optimal consumption is $c^*(x,h) = \tu_x^{\frac{1}{\beta_1-1}}(1-\lam)^{-\frac{\beta_1}{\beta_1-1}} = h$.
This corresponds to the singular control $c_t^*$ that creates a new peak for the whole path that $H_t^* > H_s^*$ for any $s<t$.  We then impose the free boundary condition $\tu_h(x,h) = 0$ in this region{\cblue,} and the HJB equation follows the same PDE in \eqref{eq: HJB_beta<1_3}.

\noindent \emph{Region III-(iii)}: on the set $\mathcal{D}_3 := \{(x,h) \in\mathbb{R}_+ \times \mathbb{R}_+ : \tu_x(x,h) < y_3(h)\}$, we get $\hat{c}(x) > h$.
The optimal consumption is again a singular control $c^*(x) = \tu_x^{\frac{1}{\beta_1-1}}(1-\lam)^{-\frac{\beta_1}{\beta_1-1}} > h$, which pulls the associated $H_{t-}^*$ upward to the new value $\tu_x(X_t^*, H_t^*)^{\frac{1}{\beta_1-1}}(1-\lam)^{-\frac{\beta_1}{\beta_1-1}}$, in which $\tu(x,h)$ is the solution of the HJB equation on the set $\mathcal{D}_2$.
This suggests that for any given initial value $(x,h)$ in the set $\mathcal{D}_3$, the feedback control $c^*(x,h)$ pushes the value function jumping immediately to the point $(x, \hat{h})$ on the boundary set $\mathcal{D}_2$.

In summary, it is sufficient to consider the effective domain defined by
\begin{equation}\label{eq: effective domain}
\begin{aligned}
\mathcal{C} &:= \{(x,h) \in \mathbb{R}_+ \times \mathbb{R}_+ :
\tu_x(x,h) \geq y_3(h)\} \\
&= \mathcal{R}_1 \cup \mathcal{R}_2 \cup \mathcal{D}_1 \cup \mathcal{D}_2 \subset \mathbb{R}_+^2,
\end{aligned}\end{equation}
and $(x,h) \in \mathcal{D}_3$ can only occur at the initial time $t=0$.

Therefore, the HJB variational inequality \eqref{eq: HJB_main} can be written to
\begin{equation}\label{eq: HJB_beta<1}
\begin{aligned}
-r\tu + rx\tu_x - \frac{\kappa^2\tu_x^2}{2\tu_{xx}} = -V(u_x, h) , ~\mathrm{and}~ \tu_h\leq0, \\
\tu_h = 0, ~\mbox{if}~ \tu_x = y_3(h),
\end{aligned}\end{equation}
where
$$
V(q, h) := \begin{cases}
-\frac{k}{\beta_2}(\lam h)^{\beta_2}, & \mbox{if } q > y_1(h),  \\
-\frac{\beta_1-1}{\beta_1}q^{\frac{\beta_1}{\beta_1-1}} - \lam h q, & \mbox{if } y_2(h) \leq q \leq y_1(h), \\
\frac1{\beta_1}((1-\lam)h)^{\beta_1} - hq, & \mbox{if } y_3(h) \leq q < y_2(h).
\end{cases}
$$
To solve the above equation, some boundary conditions are also needed.
First, to guarantee the global regularity of the solution, we need to impose smooth-fit conditions along two free boundaries that $\tu_x(x,h) = y_1(h)$ and $\tu_x(x,h) = y_2(h)$.
Next, if we start with 0 initial wealth, to avoid bankruptcy, the optimal investment and the consumption rate should be 0 at all times.
Therefore, we have that
\begin{equation}\label{eq: boundary_x->0}
\lim\limits_{x\rightarrow0} \frac{\tu_x(x,h)}{\tu_{xx}(x,h)} = 0
~~\mathrm{and}~~
\lim\limits_{x\rightarrow0} \tu(x, h) =  \int_0^{+\infty} -\frac{k}{\beta_2}(\lam h)^{\beta_2}e^{-rt}dt = -\frac{k}{r\beta_2}(\lam h)^{\beta_2}.
\end{equation}
On the other hand, when the initial wealth tends to infinity, one can consume as much as possible, leading to an infinitely large consumption rate.
In addition, a small variation of initial wealth only leads to a negligible change of the value function.
It follows that
\begin{equation}\label{eq: boundary_x->infty}
\lim\limits_{x\rightarrow+\infty} \tu(x,h) = +\infty
~~\mathrm{and}~~
\lim\limits_{x\rightarrow+\infty} \tu_x(x,h) = 0.
\end{equation}

We also note that, as the initial value $x$ is large enough, we have $(x,h) \in \mathcal{D}_2$ and thus $c^*(x) = \tu_x(x, h)^{\frac{1}{\beta_1-1}}(1-\lam)^{-\frac{\beta_1}{\beta_1-1}}.$
Intuitively, our problem is similar to the Merton problem \cite{Mert1969RES} along the free boundary $\mathcal{D}_2$, in which the optimal consumption is asymptotically proportional to the wealth. Therefore, we expect to have that
\begin{equation}\label{eq: boundary_x->infty_2}
\lim\limits_{\substack{x\rightarrow+\infty \\ (x,h)\in \mathcal{D}_2}}
\frac{\tu_x(x, h)^{\frac{1}{\beta_1-1}}}{x} = c_\infty,
\end{equation}
for some constant $c_\infty >0$.
This condition will be verified later in Corollary \ref{cor: asy_infty_wealth}.

To tackle the nonlinear HJB equation \eqref{eq: HJB_beta<1}, we employ the dual transform only with respect to the variable $x$ and treat the variable $h$ as a parameter; see similar dual transform arguments in Deng et al. \cite{DengLiPY2020arXiv} and Bo et al. \cite{BLY2021SICON}. That is,
we consider $v(y,h) := \sup_{x\geq0}\{\tu(x,h) - xy\}$, $y\geq y_3(h)$.
For a given $(x,h)\in\mathcal{C}$, define the variable $y = \tu_x(x,h)$ and it holds that $\tu(x,h) = v(y,h) + xy$.
We can further deduce that
$x = -v_y(y,h)$, $\tu(x,h) = v(y,h) - yv_y(y,h)$, and $\tu_{xx}(x,h) = -\frac{1}{v_{yy}(y,h)}$. The nonlinear ODE \eqref{eq: HJB_beta<1} can be linearized to
\begin{equation}\label{eq: LODE_beta<1}
\frac{\kappa^2}{2}y^2 v_{yy} - rv =  -V(y,h),
\end{equation}
and the free boundary condition is transformed to $y = y_3(h)$. As $h$ can be regarded as a parameter, we can study the above equation as an ODE problem of the variable $y$. Based on the dual transform, the boundary conditions \eqref{eq: boundary_x->infty}  can be written as
\begin{equation}\label{eq: boundary_dual_y->0}
\lim\limits_{y\rightarrow0} v_y(y,h) = -\infty,  ~~ \mathrm{and} ~~
\lim\limits_{y\rightarrow0} (v(y,h)-yv_y(y,h)) = +\infty.
\end{equation}
The boundary condition \eqref{eq: boundary_x->infty_2} becomes
\begin{equation}\label{eq: boundary_dual_y->0_2}
\lim\limits_{y\rightarrow0 }
 \frac{y^{\frac{1}{\beta_1-1}}}{v_y(y,h)} = -c_\infty,
\end{equation}
along the boundary curve $y_3(h) = (1-\lam)^{\beta_1}h^{\beta_1-1}$.
The boundary condition \eqref{eq: boundary_x->0} is equivalent to
\begin{equation}\label{eq: boundary_dual_y->infty}
yv_{yy}(y,h) \rightarrow 0
~~\mathrm{and}~~
v(y,h) - yv_{y}(y,h) \rightarrow -\frac{k}{r\beta_2}(\lam h)^{\beta_2}
~~\mathrm{as}~~
v_y(y,h) \rightarrow 0.
\end{equation}
It holds by the dual transform that $v_y(y,h) = -x$, and one can derive that
$\tu_h(x,h)= v_h(y,h) + (v_y(y,h) + x)\frac{dy(h)}{dh}= v_h(y,h)$. The free boundary condition \eqref{eq: HJB_beta<1} is translated to
\begin{equation}\label{eq: free_boundary_dual}
 v_h(y,h)  = 0 ~~\mathrm{for}~~ y = y_3(h).
\end{equation}

Although the dual ODE problem looks similar to the one in Deng et al. \cite{DengLiPY2020arXiv}, we emphasize that the boundary curves $y_1(h)$ and $y_2(h)$ are implicit functions of $h$ within which the implicit function z(h) lies. As a result, it becomes more complicated to apply smooth-fit conditions to derive the solution analytically and to prove the verification theorem. It is inevitable that all coefficient functions (in terms of $h$) in the solution involve $z(h)$. In particular, the following assumption on model parameters is needed, which is used in showing that the obtained solution $v(y,h)$ is convex in $y$, and in the verification proof of the optimal control. \\

\noindent \textbf{Assumption (A1)} $\beta_j < -\frac{r_2}{r_1}$, $j=1,2$, where $r_1>1$ and $r_2<0$ are two roots to the equation $\eta^2 - \eta - \frac{2r}{\kappa^2} = 0$.

Note that $\beta_j < -\frac{r_2}{r_1}$ implies that $\gamma_j = \frac{\beta_j}{\beta_j-1} > r_2$, $r_1\beta_j+r_2 = (\gamma_j-r_2)(\beta_j-1) < 0$, for $j=1,2$.
\vspace{0.1in}



\begin{proposition}\label{prop: dual_solution_beta<1}
Let \textbf{Assumption (A1)} hold. Under boundary conditions \eqref{eq: boundary_dual_y->0}, \eqref{eq: boundary_dual_y->0_2}, \eqref{eq: boundary_dual_y->infty}, the free boundary condition \eqref{eq: free_boundary_dual}, and the smooth-fit conditions with respect to $y$ along $y = y_1(h)$ and $y = y_2(h)$, ODE \eqref{eq: LODE_beta<1} in $\{y\in\mathbb{R}: y\geq y_3(h)\}$ admits the unique solution that
\begin{equation}\label{eq: dual_solu_beta<1}
v(y,h) = \begin{cases}
C_2(h)y^{r_2} - \frac{k}{r\beta_2}(\lam h)^{\beta_2}, & \mbox{if } y> y_1(h),  \\
C_3(h)y^{r_1} + C_4(h)y^{r_2} + \frac{2}{\kappa^2\gamma_1(\gamma_1-r_1)(\gamma_1-r_2)}y^{\gamma_1} - \frac{\lam h}{r}y, & \mbox{if } y_2(h) \leq y \leq y_1(h), \\
C_5(h)y^{r_1} + C_6(h)y^{r_2} + \frac{1}{r\beta_1}((1-\lam)h)^{\beta_1} - \frac{h}{r}y, & \mbox{if } y_3(h) \leq y < y_2(h),  \\
\end{cases}
\end{equation}
where $\gamma_1 = \frac{\beta_1}{\beta_1-1} < 0$,
$w(h)$ is defined in \eqref{eq: def_w},
$r_1>1$ and $r_2<0$ are given in \textbf{Assumption (A1)},
$y_1(h), y_2(h)$ and $y_3(h)$ are given in \eqref{eq: y_curves_beta<1},
and functions $C_i(h)$, $i=2, \ldots, 6$, are defined by
\begin{align}\label{paraC}
C_2(h) &:= C_4(h) + \frac{y_1(h)^{-r_2}}{r(r_1-r_2)}
\bigg( \frac{kr_1}{\beta_2}(\lam h)^{\beta_2} + \frac{r_1r_2}{\gamma_1(\gamma_1-r_2)}y_1(h)^{\gamma_1}+ \lam h r_2 y_1(h) \bigg),\nonumber\\
C_3(h) &:= \frac{y_1(h)^{-r_1}}{r(r_1-r_2)}
\bigg( \frac{kr_2}{\beta_2}(\lam h)^{\beta_2} + \frac{r_1r_2}{\gamma_1(\gamma_1-r_1)}y_1(h)^{\gamma_1}+ \lam h r_1 y_1(h) \bigg),\nonumber\\
C_4(h) &:= C_6(h) + \frac{y_2(h)^{-r_2}}{r(r_1-r_2)}\bigg( \frac{r_1}{\beta_1}((1-\lam)h)^{\beta_1} - \frac{r_1r_2}{\gamma_1(\gamma_1-r_2)}y_2(h)^{\gamma_1} + (1-\lam) h r_2y_2(h) \bigg),\nonumber\\
C_5(h) &:= C_3(h) + \frac{y_2(h)^{-r_1}}{r(r_1-r_2)}\bigg( \frac{r_2}{\beta_1}((1-\lam)h)^{\beta_1} - \frac{r_1r_2}{\gamma_1(\gamma_1-r_1)}y_2(h)^{\gamma_1} + (1-\lam) h r_1y_2(h) \bigg),\nonumber\\
C_6(h) &:= \int_{h}^{+\infty} (1-\lam)^{(r_1-r_2)\beta_1} C'_5(s) s^{(r_1-r_2)(\beta_1-1)}ds.
\end{align}
\end{proposition}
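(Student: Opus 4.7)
The plan is to reduce the nonlinear HJB to the linear Euler-type ODE \eqref{eq: LODE_beta<1}, solve it piecewise in $y$ by identifying homogeneous and particular solutions in each region, and then pin down the coefficients $C_2,\ldots,C_6$ sequentially by invoking the growth, smooth-fit, and free-boundary conditions in the right order.

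First, treating $h$ as a parameter, the homogeneous equation $\tfrac{\kappa^2}{2}y^2 v_{yy}-rv=0$ is Euler, with two linearly independent solutions $y^{r_1}$ and $y^{r_2}$, where $r_1,r_2$ are the roots of $\eta^2-\eta-\tfrac{2r}{\kappa^2}=0$; in particular $r_1+r_2=1$ and $r_1r_2=-\tfrac{2r}{\kappa^2}$. Particular solutions in each of the three subregions are read off by matching the source $V(y,h)$: a constant in Region~I; an ansatz $Ay^{\gamma_1}+By$ in Region~II gives $A=\tfrac{2}{\kappa^2\gamma_1(\gamma_1-r_1)(\gamma_1-r_2)}$ (using $\tfrac{\kappa^2}{2}\gamma_1(\gamma_1-1)-r=\tfrac{\kappa^2}{2}(\gamma_1-r_1)(\gamma_1-r_2)$) and $B=-\tfrac{\lambda h}{r}$; and a constant plus linear ansatz in Region~III.

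Next, I would impose the auxiliary conditions in sequence. The finite-limit condition in \eqref{eq: boundary_dual_y->infty} as $y\to\infty$ forces the coefficient of $y^{r_1}$ in Region~I to vanish, leaving $v(y,h)=C_2(h)y^{r_2}-\tfrac{k}{r\beta_2}(\lambda h)^{\beta_2}$ there. The $C^1$ smooth-fit at $y=y_1(h)$ gives two linear equations in $(C_2,C_3,C_4)$; forming $r_2\cdot(\text{continuity})-y_1\cdot(\text{smooth-fit})$ eliminates $C_2-C_4$ and produces $C_3(h)$ in closed form, after which continuity yields $C_2(h)=C_4(h)+\cdots$. The analogous elimination at $y=y_2(h)$ between Regions~II and III gives $C_5(h)=C_3(h)+\cdots$ and $C_4(h)=C_6(h)+\cdots$. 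Repeated algebraic simplifications use $r_1+r_2=1$, $r_1r_2=-\tfrac{2r}{\kappa^2}$, and identities such as $\tfrac{r_j-\gamma_1}{(\gamma_1-r_1)(\gamma_1-r_2)}=-\tfrac{1}{\gamma_1-r_{3-j}}$ to collapse expressions into the form claimed in \eqref{paraC}.

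Finally, the free-boundary condition $v_h(y,h)\big|_{y=y_3(h)}=0$ closes the system. Differentiating the Region~III expression in $h$, the contribution of the particular part is $\tfrac{1}{r}(1-\lambda)^{\beta_1}h^{\beta_1-1}-\tfrac{y}{r}$, which vanishes precisely at $y=y_3(h)$, reducing the free-boundary condition to $C_5'(h)y_3(h)^{r_1}+C_6'(h)y_3(h)^{r_2}=0$, i.e.
\[
C_6'(h)=-(1-\lambda)^{\beta_1(r_1-r_2)}\,h^{(\beta_1-1)(r_1-r_2)}\,C_5'(h).
\]
Integrating from $h$ to $+\infty$ with the terminal condition $C_6(+\infty)=0$, imposed so that along $y=y_3(h)$ the Region~III asymptotics match the Merton-type scaling \eqref{eq: boundary_dual_y->0_2} as $h\to\infty$, recovers the claimed integral representation of $C_6(h)$.

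The main obstacle will be verifying convergence of the improper integral defining $C_6$ and checking that $C_6(+\infty)=0$ is indeed the correct terminal prescription, since $C_5(h)$ inherits implicit $h$-dependence through $z(h)$, $y_1(h)$ and $y_2(h)$. This is exactly where Assumption~A1 enters: the bound $\beta_j<-r_2/r_1$, equivalently $r_1\beta_j+r_2<0$, controls the asymptotic power of $s$ in the integrand $C_5'(s)\,s^{(\beta_1-1)(r_1-r_2)}$ at infinity and delivers its integrability; once the solution is inverted back, the same condition underwrites $v_{yy}(y,h)>0$ in each region, which is the convexity/duality ingredient needed to close the argument and establish uniqueness via the linearity of the ODE in each region together with the already-imposed matching conditions.
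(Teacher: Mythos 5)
Your proposal follows essentially the same route as the paper: solve the Euler equation piecewise with homogeneous solutions $y^{r_1},y^{r_2}$ and matched particular solutions, kill the $y^{r_1}$ mode in Region I via the growth condition \eqref{eq: boundary_dual_y->infty}, solve the two smooth-fit systems at $y_1(h)$ and $y_2(h)$ for $C_3$, $C_2-C_4$, $C_3-C_5$, $C_4-C_6$, and close the system by reducing the free-boundary condition at $y_3(h)$ to $C_5'y_3^{r_1}+C_6'y_3^{r_2}=0$ (exploiting the cancellation of the particular part there) and integrating with $C_6(+\infty)=0$. The "obstacle" you flag at the end — justifying $C_6(+\infty)=0$ and the convergence of the improper integral via \eqref{eq: boundary_dual_y->0_2} and the exponent bound $r_1\beta_j+r_2<0$ from Assumption (A1) — is exactly how the paper completes the argument (through its asymptotic-order lemma), so your identification of the mechanism is correct and complete.
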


\begin{remark}
Note that all $C_i(h)$, $i=2,\ldots, 6$, are implicit functions of $h$. In particular, $C_2(h)$, $C_4(h)$ and $C_6(h)$ are written in the integral form. $C_3(h)$ and $C_5(h)$ are written in terms of implicit functions $y_1(h)$ and $y_2(h)$. Some technical efforts are needed to handle these semi-analytical functions in the later verification proof.
\end{remark}

\begin{theorem}[Verification Theorem]\label{thm: beta<1}
Let $(x,h) \in \mathcal{C}$, $h\in\mathbb{R}$ and $0<\lam <1$, where $x\geq0$ stands for the initial wealth, $h\geq0$ is the initial reference level, and $\mathcal{C}$ is the effective domain \eqref{eq: effective domain}.
Let \textbf{Assumption (A1)} hold. For $(y,h)\in\{(y,h)\in\mathbb{R}_+^2: y \geq y_3(h)\}$, define the feedback functions that
\begin{equation}\label{eq: c_y_beta<1}
c^\dag(y,h) = \begin{cases}
0, & \mbox{if } y > y_1(h),  \\
\lam h + y^{\frac1{\beta_1-1}}, & \mbox{if } y_2(h) \leq y \leq y_1(h), \\
h, & \mbox{if } y_3(h) < y < y_2(h), \\
y^{\frac{1}{\beta_1-1}}(1-\lam)^{-\frac{\beta_1}{\beta_1-1}}, & \mbox{if } y = y_3(h),
\end{cases}
\end{equation}
and
\begin{equation}\label{eq: pi_y_beta<1}
\begin{aligned}
&\pi^\dag(y,h) = \frac{\mu-r}{\sigma^2}y v_{yy}(y,h) \\
= & \frac{\mu - r}{\sigma^2}
\begin{cases}
\frac{2r}{\kappa^2} C_2(h)y^{r_2-1}, & \mbox{if } y > y_1(h), \\
\frac{2r}{\kappa^2} C_3(h)y^{r_1-1} +  \frac{2r}{\kappa^2} C_4(h)y^{r_2-1}
+\frac{2(\gamma_1-1)}{\kappa^2(\gamma_1-r_1)(\gamma_1-r_2)}y^{\gamma_1-1},
& \mbox{if } y_2(h) \leq y \leq y_1(h),\\
\frac{2r}{\kappa^2} C_5(h)y^{r_1-1} +  \frac{2r}{\kappa^2} C_6(h)y^{r_2-1} , & \mbox{if } y_3(h) \leq y < y_2(h).
\end{cases}
\end{aligned}
\end{equation}
We consider the process $Y_t := y^* e^{rt}M_t$, where $M_t := e^{-(r+\frac{\kappa^2}{2})t - \kappa W_t}$ is the discounted rate state price density process, and $y^* = y^*(x,h)$ is the unique solution to the budget constraint $\mathbb{E}[ \int_0^\infty c^\dag(Y_t(y), H_t^\dag(y))$ $M_tdt ] = x$ with
$$
H_t^\dag(y) := h\vee \sup\limits_{s\leq t} c^\dag(Y_s(y), H_s^\dag(y))
=h\vee \bigg((1-\lam)^{-\frac{\beta_1}{\beta_1-1}} (\inf_{s\leq t}Y_s(y))^{\frac{1}{\beta_1-1}} \bigg){\cblue.}
$$
The value function $\tu(x,h)$ can be attained by employing the optimal consumption and portfolio strategies in the feedback form that $c_t^* = c^\dag(Y_t^*, H_t^*)$ and $\pi_t^* = \pi^\dag(Y_t^*, H_t^*)$ for all $t\geq0$, where $Y_t^*:=Y_t(y^*)$ and $H_t^*=H_t^\dag(y^*)$.
\end{theorem}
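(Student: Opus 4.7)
The plan is to employ a standard Itô-based verification argument combined with the dual/martingale framework. Starting from the candidate dual $v(y,h)$ in Proposition \ref{prop: dual_solution_beta<1}, I would first reconstruct the candidate primal value via $\tu(x,h) := v(y,h) - yv_y(y,h)$, where $y = y(x,h)$ is implicitly defined by $x = -v_y(y,h)$. The first preparatory step is to verify that $\tu$ is a bona fide classical solution (indeed $C^{2,1}$ in the interior of each region and $C^1$ across the free boundaries) of the HJB variational inequality \eqref{eq: HJB_beta<1} on the effective domain $\mathcal{C}$.

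For this I would check three things in order: (i) the smooth-fit conditions at $y = y_1(h)$ and $y = y_2(h)$ imposed in Proposition \ref{prop: dual_solution_beta<1} propagate to $C^{2,1}$-regularity of $\tu$ across the corresponding wealth thresholds, because the dual transform converts smoothness in $y$ into smoothness in $x$; (ii) $v(y,h)$ is strictly convex in $y$, so that $\tu(x,h)$ is strictly concave in $x$ and the first-order condition used to obtain the feedback portfolio $\pi^\dag$ is valid — this is where Assumption (A1) enters, since the inequalities $\gamma_j > r_2$ and $r_1\beta_j + r_2 < 0$ ensure each piece of $v_{yy}$ has the correct sign; (iii) the free boundary condition $\tu_h(x,h)=0$ on $\{y=y_3(h)\}$ is satisfied, and $\tu_h \leq 0$ elsewhere in $\mathcal{C}$, using the identity $\tu_h = v_h$ and the integral form of $C_6(h)$ to do the bookkeeping on the implicit dependence on $h$.

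Next I would apply the generalized Itô formula to $e^{-rt}\tu(X_t, H_t)$ for an arbitrary admissible control $(\pi,c) \in \mathcal{A}(x)$. Because $H$ is non-decreasing (with at most a single initial jump when $(x,h)\in\mathcal{D}_3$), the decomposition produces a drift term controlled by $-r\tu + rx\tu_x + \pi(\mu-r)\tu_x + \tfrac12\sigma^2\pi^2\tu_{xx} - c\tu_x + \tU(c,h)$, a non-positive contribution $\int_0^t e^{-rs}\tu_h(X_s,H_s)\,dH_s$ coming from the running-maximum term (since $\tu_h\leq 0$), and a local-martingale piece. The HJB variational inequality then gives, after standard localization and Fatou,
\begin{equation*}
\tu(x,h) \geq \mathbb{E}\left[\int_0^\infty e^{-rt}\tU(c_t,H_t)\,dt\right] + \liminf_{T\to\infty} e^{-rT}\mathbb{E}[\tu(X_T, H_T)].
\end{equation*}
For the feedback control $(c^*,\pi^*)$, pointwise equality holds in the HJB supremum, and the free boundary condition ensures that $dH^*$ is supported on the set $\{\tu_h=0\}$, so the singular contribution vanishes and the inequality saturates.

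The main obstacles I expect are threefold. First, the implicit dependence of $y_1(h), y_2(h), z(h)$ and hence of all $C_i(h)$ on $h$ makes verifying the sign of $\tu_h$ in the middle region delicate; one has to differentiate the expression for $v(y,h)$ with respect to $h$ while keeping $y$ fixed, and the smooth-fit identities at $y=y_i(h)$ must be used to cancel the derivative-of-boundary terms. Second, establishing the transversality estimate $\lim_{T\to\infty} e^{-rT}\mathbb{E}[\tu(X_T^*,H_T^*)]=0$ requires a growth bound on $\tu$ that survives the running maximum; the natural route is to control $|\tu(x,h)|$ by a linear term in $x$ plus a polynomial in $h$, and then to bound $\mathbb{E}[(H_T^*)^{\beta_2}]$ using the explicit representation $H_T^* = h \vee ((1-\lambda)^{-\beta_1/(\beta_1-1)}(\inf_{s\leq T} Y_s)^{1/(\beta_1-1)})$, whose moments are finite under Assumption (A1). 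Third, admissibility of $(c^*,\pi^*)$ — in particular the no-bankruptcy property $X_t^*\geq 0$ — must be extracted from the identity $X_t^* = -v_y(Y_t^*, H_t^*)$ together with $v_y \leq 0$ (a consequence of convexity and the boundary behaviour at $y\to 0$). Once these ingredients are in place, existence and uniqueness of $y^*$ solving the budget equation follow from the strict monotonicity and continuity in $y$ of $y\mapsto \mathbb{E}[\int_0^\infty c^\dag(Y_t(y), H_t^\dag(y))\,M_t\,dt]$, completing the argument.
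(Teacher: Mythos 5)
Your proposal follows the primal HJB/It\^o verification route, whereas the paper proves Theorem \ref{thm: beta<1} by the martingale--duality method: for an arbitrary admissible $(\pi,c)$ it writes the objective as $\mathbb{E}[\int_0^\infty e^{-rt}(\tU(c_t,H_t)-Y_t(y)c_t)dt]+y\,\mathbb{E}[\int_0^\infty c_tM_tdt]$, bounds the first term by the pointwise Legendre inequality $\tU(c,h)-yc\leq V(y,h)$ and the second by the budget constraint $\mathbb{E}[\int_0^\infty c_tM_tdt]\leq x$, and then identifies $\mathbb{E}[\int_0^\infty e^{-rt}V(Y_t(y),\hat H_t(y))dt]$ with $v(y,h)$ (Lemmas \ref{lemma: 1}--\ref{lemma: 2}), attaining equality at $y^*$ via Lemma \ref{lemma: 3}. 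The It\^o computation and the transversality estimates are thereby needed only on the dual side, for the explicit process $Y_t(y)$ and the explicit functional $\hat H_t(y)$ of its running infimum, whose moments are computable under Assumption (A1) (Lemmas \ref{lemma: claim1}--\ref{lemma: claim2}).

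This difference is not cosmetic, and two steps of your plan have genuine gaps. First, your supermartingale inequality leaves the term $\liminf_{T\to\infty}e^{-rT}\mathbb{E}[\tu(X_T,H_T)]$, which must be shown nonnegative for \emph{every} admissible control. Since $\tu(x,h)\geq -\frac{k}{r\beta_2}(\lam h)^{\beta_2}$ and $H_T=h\vee\sup_{s\leq T}c_s$, this amounts to $e^{-rT}\mathbb{E}[H_T^{\beta_2}]\to 0$ for arbitrary admissible $c$; but the budget constraint controls only $\mathbb{E}[\int_0^\infty c_tM_tdt]$, not the running supremum of $c$ (a consumption spike of height $e^{2rT/\beta_2}$ on a vanishing time interval is admissible and defeats the bound). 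One would have to restrict to controls with finite objective value, or argue through the large negative utility such a spike incurs forever after --- neither of which is ``standard localization and Fatou.'' The dual proof never needs this estimate. Second, your route requires verifying $\tu_h\leq 0$ on all of $\mathcal{C}$ so that the contribution $\int_0^t e^{-rs}\tu_h\,dH_s$ is nonpositive; the paper never proves this inequality --- it only uses the identity $\tu_h=v_h$ together with the free-boundary condition $v_h(y_3(h),h)=0$ --- and establishing it would require differentiating every implicit coefficient $C_i(h)$ and the implicit boundaries $y_1(h),y_2(h),z(h)$ in $h$ and signing the result, a substantial computation you have not supplied. The remaining ingredients you list (convexity of $v$ in $y$, i.e.\ Lemma \ref{lemma: beta<1}; continuity and monotonicity giving existence and uniqueness of $y^*$; transversality for the optimal pair) do correspond to auxiliary results the paper actually proves.
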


\begin{remark}\label{remark: predictable}
Note that the optimal consumption $c_t^*$ has a jump when $Y_t^*=y_1(H_t^*)$ and $c_t^*>\lambda H_t^*$ whenever $c_t^*>0$. Meanwhile, we note that the running maximum process $H_t^*$ still has continuous paths for $t>0$. Indeed, from the feedback form, $c_{t-}^*$ jumps only when $c_{t-}^*<H_t^*$ and we also have that $c_t^*\leq H_t^*$ after the jump, i.e., the jump can never increase $H_t^*$. Therefore, both $X_t^*$ and $H_t^*$ still have continuous paths.
\end{remark}

By the dual representation, we have that $x = g(\cdot, h) := -v_y(\cdot,h)$.
Define $f(\cdot, h)$ as the inverse of $g(\cdot, h)$, then $\tu(x,h) = v\circ(f(x,h), h) + xf(x,h)$.
Note that the function $f$ should have a piecewise form across different regions.
The invertibility of the map $x\mapsto g(x,h)$ is guaranteed by the next lemma.

\begin{lemma}\label{lemma: beta<1}
Let \textbf{Assumption (A1)} hold. The function $v(y,h)$ is convex in all regions so that the inverse Legendre transform $\tu(x,h) = \inf_{y\geq y_3(h)} [v(y,h) + xy]$ is well defined.
Moreover, it implies that the feedback optimal portfolio $\pi^*(y,h) > 0$.
\end{lemma}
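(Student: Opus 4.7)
The plan is to verify that $v_{yy}(y,h) > 0$ on each of the three pieces of the representation \eqref{eq: dual_solu_beta<1} by direct differentiation, then harvest the two conclusions at once: convexity of $v(\cdot,h)$ makes $g(y,h) := -v_y(y,h)$ strictly monotone in $y$, so its inverse $f(\cdot,h)$ exists and the Legendre identity $\tu(x,h) = \inf_{y \geq y_3(h)}[v(y,h) + xy]$ is well defined; and the feedback portfolio formula \eqref{eq: pi_y_beta<1} becomes manifestly positive because $\pi^\dag(y,h) = \frac{\mu-r}{\sigma^2} y v_{yy}(y,h)$ with $\mu > r$, $y > 0$, and $v_{yy}(y,h) > 0$.

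For the region-by-region analysis I would exploit the sign constraints on the exponents that follow from Assumption (A1), namely $r_1 > 1 > 0 > r_2$, $\gamma_1 < 0$, $\gamma_1 - r_2 > 0$, $\gamma_1 - r_1 < 0$, $\gamma_1 - 1 < 0$, and $r_1 \beta_j + r_2 < 0$ for $j=1,2$. In the outermost region $\{y > y_1(h)\}$, $v_{yy}(y,h) = r_2(r_2 - 1) C_2(h) y^{r_2 - 2}$; the prefactor $r_2(r_2-1)$ is positive since $r_2 < 0$, so convexity reduces to $C_2(h) > 0$, which I would extract from the explicit formula in \eqref{paraC} under Assumption (A1) or, alternatively, from the asymptotics encoded in \eqref{eq: boundary_dual_y->infty}. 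In the middle region $\{y_2(h) \leq y \leq y_1(h)\}$, the particular-solution contribution carries the coefficient $\frac{2(\gamma_1 - 1)}{\kappa^2 (\gamma_1 - r_1)(\gamma_1 - r_2)}$, which is positive because both numerator and denominator are negative; the smooth-fit conditions at $y = y_1(h)$ match $v_y$ and $v_{yy}$ across the boundary, so positivity propagates inward once the signs of $C_3(h)$ and $C_4(h)$ are controlled. In the innermost region $\{y_3(h) \leq y < y_2(h)\}$, smooth fit at $y = y_2(h)$ again propagates convexity, while the integral representation $C_6(h) = \int_h^{\infty}(1-\lam)^{(r_1-r_2)\beta_1} C_5'(s) s^{(r_1-r_2)(\beta_1-1)} ds$ together with the free boundary condition $v_h(y_3(h), h) = 0$ fixes the sign of the $y^{r_2-2}$ contribution.

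The hardest step is signing the implicit coefficients $C_3(h), C_4(h), C_5(h)$ cleanly, since they are built from mixtures of positive and negative terms involving the implicit functions $y_1(h), y_2(h), z(h)$ and could a priori cancel. My plan is to anchor the analysis at the two extremes by pinning the sign of $C_2(h)$ from the asymptotic behavior \eqref{eq: boundary_dual_y->infty} at $y \to \infty$ and the sign of $C_6(h)$ from the free boundary condition \eqref{eq: free_boundary_dual} at $y = y_3(h)$, and then use Assumption (A1) in the sharper form $r_1 \beta_j + r_2 < 0$ to guarantee that the smooth-fit identities at $y_1(h)$ and $y_2(h)$ transfer positivity across the region boundaries without any sign reversal. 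Once $v_{yy} > 0$ holds throughout $\{y \geq y_3(h)\}$, strict monotonicity of $g(\cdot,h)$ and the positivity of $\pi^\dag$ both follow immediately.
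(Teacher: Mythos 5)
Your high-level skeleton is exactly the paper's: establish $v_{yy}(y,h)>0$ separately on the three pieces of \eqref{eq: dual_solu_beta<1}, which simultaneously yields invertibility of $y\mapsto -v_y(y,h)$ (hence the Legendre transform) and positivity of $\pi^\dag=\frac{\mu-r}{\sigma^2}yv_{yy}$. The signs you list ($r_2(r_2-1)=r_1(r_1-1)=\frac{2r}{\kappa^2}>0$, $\frac{\gamma_1-1}{(\gamma_1-r_1)(\gamma_1-r_2)}>0$) are also correct. However, the mechanisms you propose for the hard steps would not close the argument. First, smooth fit at $y_1(h)$ and $y_2(h)$ matches only $v$ and $v_y$ (and, through the ODE, $v_{yy}$ at the single matching point); positivity of $v_{yy}$ at a boundary point does not ``propagate'' into the interior of the adjacent region, since $v_{yy}$ there is a mixture of powers $y^{r_1-2},y^{r_2-2},y^{\gamma_1-2}$ with coefficients of a priori unknown sign. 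In particular the sign of $C_4(h)$ cannot be ``controlled'' in the way you suggest: the paper never shows $C_4(h)>0$. Instead it uses $C_4(h)>C_4(h)-C_6(h)$ (with $C_4-C_6<0$), observes that $(C_4-C_6)y^{r_2-\gamma_1}$ is minimized over $[y_2(h),y_1(h)]$ at $y=y_2(h)$, and verifies the exact cancellation
\begin{equation*}
(C_4(h)-C_6(h))\big((1-\lam)h\big)^{(r_2-\gamma_1)(\beta_1-1)}+\frac{\gamma_1-1}{r(\gamma_1-r_1)(\gamma_1-r_2)}=\frac{\gamma_1-1}{r(\gamma_1-r_1)(r_1-r_2)}>0.
\end{equation*}
This explicit worst-case evaluation is the substance of the middle-region argument and is absent from your plan.

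Second, the sign of $C_2(h)$ cannot be ``anchored'' by the asymptotics \eqref{eq: boundary_dual_y->infty}: as $y\to+\infty$ every term $C_2(h)y^{r_2}$, $C_2(h)y^{r_2-1}$ vanishes whatever the sign of $C_2(h)$, so the boundary condition is sign-blind (arguing $-v_y=x\geq0$ would be circular, since that is part of what the lemma establishes). The paper instead writes $C_2>C_2-C_6=(C_2-C_4)+(C_4-C_6)$ and evaluates both differences using the tangency identity $\frac{k}{\beta_2}(\lam h)^{\beta_2}=\frac{1}{\gamma_1}w(h)^{\beta_1}+\lam h\,w(h)^{\beta_1-1}$ from \eqref{eq: beta<1,tagent z}. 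Likewise, signing $C_6(h)$ from the free boundary relation $C_6'(h)=-C_5'(h)y_3(h)^{r_1-r_2}$ requires first proving $C_5'(h)>C_3'(h)>0$, which is another computation relying on the tangency equation and $w'(h)>0$; and $C_5(h)>0$ itself comes from $C_3(h)>0$ together with $C_3(h)-C_5(h)<0$, not from propagation. Finally, the whole analysis must be split into the cases $y_1(h)>y_2(h)$ and $y_1(h)=y_2(h)$ (the middle region degenerating to a point in the latter), a case distinction your proposal omits. So the strategy is right, but the three concrete sign determinations that constitute the proof are replaced by mechanisms that do not work.
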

\begin{proof}
The proof is given in Section \ref{sec: proof_lemma_beta<1}.
\end{proof}
Thanks to Lemma \ref{lemma: beta<1}, we can apply the inverse Legendre transform to the solution $v(y, h)$ in \eqref{eq: dual_solu_beta<1}.
Similar to Section 3.1 in \cite{DengLiPY2020arXiv}, we can derive the following three boundary curves $x_\mathrm{zero}(h),$ $x_\mathrm{aggr}(h),$ and $x_\mathrm{lavs}(h)$ that
\begin{equation}\label{eq: boundary_beta<1}
\begin{aligned}
x_{\mathrm{zero}}(h) &:= -y_1(h)^{r_2-1}C_2(h)r_2,\\
x_{\mathrm{aggr}}(h) &:= -C_3(h)r_1y_2(h)^{r_1-1} - C_4(h)r_2y_2(h)^{r_2-1} - \frac{2}{\kappa^2(\gamma_1-r_1)(\gamma_1-r_2)}y_2(h)^{\gamma_1-1} + \frac{\lam h}{r},\\
x_{\mathrm{lavs}}(h) &:= -C_5(h)r_1y_3(h)^{r_1-1}- C_6(h)r_2y_3(h)^{r_2-1} + \frac{h}{r},
\end{aligned}
\end{equation}
and it holds that the feedback function of the optimal consumption satisfies: (i) $c^*(x,h) = 0$ when $0<x\leq x_\mathrm{zero}(h)$;
(ii) $0 < c^*(x,h) \leq h$ when $x_\mathrm{zero}(h) \leq x \leq x_\mathrm{aggr}(h)$; (iii) $c^*(x,h)=h$ when $x_\mathrm{aggr}(h) < x \leq x_\mathrm{lavs}(h)$.
In particular, the condition $u_x(x,h) \geq y_3(h)$ in the effective domain can be explicitly expressed as $x\leq x_\mathrm{lavs}(h)$.
Moreover, the following inverse function is well defined that
\begin{equation}\label{eq: tildeh}
\tilde{h}(x) := (x_\mathrm{lavs})^{-1}(x), x\geq0.
\end{equation}
Along the boundary $x=x_\mathrm{lavs}(h)$, the feedback form of the optimal consumption in \eqref{eq: c_beta<1} is $c^*(x,h)=(1-\lam)^{-\frac{\beta_1}{\beta_1-1}}f(x,\tilde{h}(x))^{-\frac1{\beta_1-1}}$. Using the dual relationship and Proposition \ref{prop: dual_solution_beta<1}, the function $f$ can be implicitly determined as follows:
\begin{itemize}
\item[(i)] If 
$x < x_\mathrm{zero}(h)$, Lemma \ref{lemma: beta<1} implies that $v_y(y,h)$ is strictly increasing in $y$ and $f(x,h) = f_1(x,h)$ can be uniquely determined by
\begin{equation*}\label{eq: f_1_beta<1}
x = -C_2(h)r_2(f_1(x,h))^{r_2-1}.
\end{equation*}
\item[(ii)] If 
$x_\mathrm{zero}(h) \leq x \leq x_\mathrm{aggr}(h)$, Lemma \ref{lemma: beta<1} implies that $v_y(y,h)$ is strictly increasing in $y$ and $f(x,h) = f_2(x,h)$ can be uniquely determined by
\begin{equation}\label{eq: f_2_beta<1}
\begin{aligned}
x = &-C_3(h)r_1(f_2(x,h))^{r_1-1} - C_4(h)r_2(f_2(x,h))^{r_2-1}- \frac{2}{\kappa^2(\gamma_1-r_1)(\gamma_1-r_2)}(f_2(x,h))^{\gamma_1-1} + \frac{\lam h}{r}.
\end{aligned}\end{equation}
\item[(iii)] If 
$x_\mathrm{aggr}(h) < x \leq x_\mathrm{lavs}(h)$, Lemma \ref{lemma: beta<1} implies that $v_y(y,h)$ is strictly increasing in $y$ and $f(x,h) = f_3(x,h)$ can be uniquely determined by
\begin{equation}\label{eq: f_3_beta<1}
x = -C_5(h)r_1(f_3(x,h))^{r_1-1} - C_6(h)r_2(f_3(x,h))^{r_2-1} + \frac{h}{r}.
\end{equation}
\end{itemize}

\begin{corollary}\label{cor: beta<1}
For $(x,h) \in \mathcal{C}$, $0<\lam<1$, $\beta_1<1$ and $\beta_2<1$, under \textbf{Assumption (A1)}, we define the piecewise function
$$
f(x,h) = \begin{cases}
           \left( \frac{-x}{C_2(h)r_2} \right)^{\frac1{r_2-1}}, & \mbox{if }x< x_\mathrm{zero}(h),  \\
           f_2(x,h), & \mbox{if }x_\mathrm{zero}(h) \leq x \leq x_\mathrm{aggr}(h),  \\
           f_3(x,h), & \mbox{if }x_\mathrm{aggr}(h) < x \leq x_\mathrm{lavs}(h),
         \end{cases}
$$
where $f_2(x,h)$ and $f_3(x,h)$ are defined in \eqref{eq: f_2_beta<1} and \eqref{eq: f_3_beta<1}, respectively.

The value function $\tu(x,h)$ in \eqref{eq: concavevalue} can be written as
\begin{equation*}\label{eq: u_beta<1}
\tu(x,h) = \begin{cases}
C_2(h)(f(x,h))^{r_2} - \frac{k}{r\beta_2}(\lam h)^{\beta_2} + xf(x,h), & \mbox{if } x < x_\mathrm{zero}(h), \\
\begin{aligned}
&C_3(h)(f(x,h))^{r_1} + C_4(h)(f(x,h))^{r_2} \\
&+ \frac{2}{\kappa^2\gamma_1(\gamma_1-r_1)(\gamma_1-r_2)}(f(x,h))^{\gamma_1} - \frac{\lam h}{r}f(x,h) + xf(x,h),
\end{aligned}
& \mbox{if } x_\mathrm{zero}(h) \leq x \leq x_\mathrm{aggr}(h), \\
\begin{aligned}
&C_5(h)(f(x,h))^{r_1} + C_6(h)(f(x,h))^{r_2} \\ &+\frac{1}{r\beta_1}((1-\lam)h)^{\beta_1} - \frac{h}{r}f(x,h) + xf(x,h),
\end{aligned}
& \mbox{if } x_\mathrm{aggr}(h) < x \leq x_\mathrm{lavs}(h),
\end{cases}
\end{equation*}
where the free boundaries $x_\mathrm{zero}(h)$, $x_\mathrm{aggr}(h)$, and $x_\mathrm{lavs}(h)$ are given explicitly in \eqref{eq: boundary_beta<1}.
The feedback optimal consumption and portfolio can be expressed in terms of primal variables $(x,h)$ that
\begin{equation}\label{eq: c_beta<1}
c^*(x,h) = \begin{cases}
0, & \mbox{if } x < x_\mathrm{zero}(h), \\
\lam h + (f(x,h))^{\frac1{\beta_1-1}}, & \mbox{if } x_\mathrm{zero}(h) \leq x \leq x_\mathrm{aggr}(h) , \\
h, & \mbox{if } x_\mathrm{aggr}(h) < x < x_\mathrm{lavs}(h), \\
(1-\lam)^{-\frac{\beta_1}{\beta_1-1}}f(x,\tilde{h}(x))^{-\frac1{\beta_1-1}}, & \mbox{if } x = x_\mathrm{lavs}(h) ,
\end{cases}
\end{equation}
where $\tilde{h}(x)$ is given in \eqref{eq: tildeh}, and
\begin{equation}\label{eq: pi_beta<1}
\begin{aligned}
&\pi^*(x,h) \\
=&\frac{\mu-r}{\sigma^2}
\begin{cases}
(1-r_2)x, & \mbox{if } x < x_\mathrm{zero}(h), \\
\begin{aligned}
&\bigg(\frac{2r}{\kappa^2} C_3(h)f^{r_1-1}(x,h) +  \frac{2r}{\kappa^2} C_4(h)f^{r_2-1}(x,h) \\
&+ \frac{2(\gamma_1-1)}{\kappa^2(\gamma_1-r_1)(\gamma_1-r_2)}f^{\gamma_1-1}(x,h)\bigg),
\end{aligned}
& \mbox{if } x_\mathrm{zero}(h) \leq x \leq x_\mathrm{aggr}(h), \\
\frac{2r}{\kappa^2} C_5(h)f^{r_1-1}(x,h) +  \frac{2r}{\kappa^2}C_6(h)f^{r_2-1}(x,h) ,
& \mbox{if } x_\mathrm{aggr}(h)< x\leq x_\mathrm{lavs}(h).
\end{cases}
\end{aligned}
\end{equation}

Moreover, for any initial value $X_0^*, H_0^* = (x,h) \in \mathcal{C}$, the stochastic differential equation
\begin{equation}\label{eq: SDE}
dX_t^* = rX_t^*dt + \pi^*(\mu-r)dt + \pi^*\sigma dW_t - c^*dt,
\end{equation}
has a unique strong solution given the optimal feedback control $(c^*, \pi^*)$ as above.
\end{corollary}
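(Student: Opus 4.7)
My plan is to obtain everything in this corollary by transforming the dual-side results of Proposition \ref{prop: dual_solution_beta<1} and Theorem \ref{thm: beta<1} back to primal variables via the inverse Legendre transform, and then to verify the SDE well-posedness separately. The starting point is Lemma \ref{lemma: beta<1}, which guarantees that $v(\cdot,h)$ is convex on $\{y\geq y_3(h)\}$. Consequently the map $y \mapsto -v_y(y,h)=:g(y,h)$ is strictly increasing, so it has a well-defined inverse $f(\cdot,h)$ on the image $\{x: 0\leq x\leq x_\mathrm{lavs}(h)\}$, and the inverse Legendre transform formula $\tu(x,h)=v(f(x,h),h)+xf(x,h)$ is valid.

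Next I would identify the three wealth thresholds by evaluating $g$ at the three $y$-thresholds. Differentiating the piecewise expression for $v$ in \eqref{eq: dual_solu_beta<1} and substituting $y=y_1(h), y_2(h), y_3(h)$ respectively produces exactly the formulas in \eqref{eq: boundary_beta<1} for $x_\mathrm{zero}(h)$, $x_\mathrm{aggr}(h)$, $x_\mathrm{lavs}(h)$. In the top region $y>y_1(h)$, $v_y(y,h)=C_2(h)r_2y^{r_2-1}$ depends on $y$ only through a single monomial, so the relation $x=-C_2(h)r_2 y^{r_2-1}$ inverts explicitly to $f(x,h)=(-x/(C_2(h)r_2))^{1/(r_2-1)}$; in the middle and bottom regions, the defining relations \eqref{eq: f_2_beta<1}--\eqref{eq: f_3_beta<1} are strictly monotone in $y$ by Lemma \ref{lemma: beta<1}, so $f_2$ and $f_3$ are well-defined implicit functions. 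Plugging $y=f(x,h)$ into the three branches of \eqref{eq: dual_solu_beta<1} yields the piecewise formula for $\tu(x,h)$; plugging it into the feedback forms \eqref{eq: c_y_beta<1} and \eqref{eq: pi_y_beta<1} from Theorem \ref{thm: beta<1} gives the corresponding expressions \eqref{eq: c_beta<1} and \eqref{eq: pi_beta<1}. The only subtlety here is the singular boundary case $x=x_\mathrm{lavs}(h)$: along this curve $y=y_3(h)=(1-\lam)^{\beta_1}h^{\beta_1-1}$, so $h$ is pinned down by $x$ via the inverse $\tilde{h}(x)=(x_\mathrm{lavs})^{-1}(x)$ defined in \eqref{eq: tildeh}, and the feedback formula from Theorem \ref{thm: beta<1} can be rewritten as $(1-\lam)^{-\beta_1/(\beta_1-1)}f(x,\tilde{h}(x))^{-1/(\beta_1-1)}$.

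The main obstacle is the existence and uniqueness of a strong solution to \eqref{eq: SDE}, because the feedback consumption $c^*(\cdot,h)$ has a jump across $x=x_\mathrm{zero}(h)$ and the feedback map is only piecewise smooth, so standard Lipschitz SDE theory does not apply directly. My strategy is to bypass this by constructing the solution pathwise through the dual state process. Specifically, define $Y_t=y^*e^{rt}M_t$ as in Theorem \ref{thm: beta<1} (this is a strictly positive continuous process for each fixed $y^*$), let $H_t^\dag$ be given by the running-minimum formula displayed in Theorem \ref{thm: beta<1} (which gives a continuous, non-decreasing process, as noted in Remark \ref{remark: predictable}), and set $X_t^*:=-v_y(Y_t,H_t^\dag)$. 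An application of It\^o's formula to $v$, using the free boundary condition $v_h=0$ on $\{y=y_3(h)\}$ to kill the $dH_t^\dag$ term, the dual ODE \eqref{eq: LODE_beta<1} to simplify the drift, and the identities $\pi^\dag=(\mu-r)/\sigma^2\cdot yv_{yy}$ and $c^\dag=f(x,h)^{1/(\beta_1-1)}+\lam h$ to recognize the coefficients, shows that $X_t^*$ solves \eqref{eq: SDE} with the prescribed feedback controls. For uniqueness, I would argue that any solution $\bar X$ to \eqref{eq: SDE} satisfies $f(\bar X_t,\bar H_t)=Y_t$ almost surely (by matching the state-price density relation implicit in the construction), and since $f(\cdot,h)$ is strictly monotone, $\bar X_t$ must coincide with $X_t^*$; the continuity of $H_t^*$ from Remark \ref{remark: predictable} ensures the jump set of $c^*$ is hit at Lebesgue-negligible times, so the drift ambiguity at the jump does not affect the integrated dynamics.
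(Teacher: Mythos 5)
Your reduction of the formulas for $\tu$, $c^*$, $\pi^*$ and of the three wealth thresholds to the inverse Legendre transform of Proposition \ref{prop: dual_solution_beta<1} is exactly how the paper proceeds (this part is carried out in the text preceding the corollary, with Lemma \ref{lemma: beta<1} supplying the strict monotonicity of $y\mapsto -v_y(y,h)$), so that portion is fine. The paper then regards the entire content of the proof of the corollary as the remaining claim, namely strong existence and uniqueness for the closed-loop SDE \eqref{eq: SDE}, and it establishes this by regularity of the feedback coefficients: Lemma \ref{lemma: 5} gives the piecewise $C^1$ property of $f$ and its continuity across the interfaces, and Lemma \ref{lemma: pi_Lip} (the bulk of the work, with the bounds on $A_1/B_1$, $A_2/B_2$ and on the derivatives $C_i'(h)$) shows that $\pi^*$ is Lipschitz on $\mathcal{C}$, after which standard SDE theory applies as in \cite{DengLiPY2020arXiv}. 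Your route for this step is genuinely different and has two gaps.

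First, in your pathwise construction $X_t^*:=-v_y(Y_t,H_t^\dag)$, the singular term you must eliminate when applying It\^o's formula is $-v_{yh}(Y_t,H_t^\dag)\,dH_t^\dag$, not $v_h\,dH_t^\dag$: the free boundary condition $v_h(y_3(h),h)=0$ that you invoke controls the dynamics of $v(Y_t,H_t^\dag)$, not of its $y$-derivative, and $v_{yh}=0$ along $\{y=y_3(h)\}$ is neither imposed in \eqref{eq: free_boundary_dual} nor a consequence of it (differentiating $v_h(y_3(h),h)=0$ in $h$ only yields $v_{yh}y_3'+v_{hh}=0$ there). Since $dH_t^\dag$ is a nontrivial singular continuous measure supported on $\{Y_t=y_3(H_t^\dag)\}$, this term must be handled explicitly, or the constructed process is not of the It\^o form \eqref{eq: SDE}. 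Second, your uniqueness step --- that any solution $\bar X$ of \eqref{eq: SDE} satisfies $f(\bar X_t,\bar H_t)=Y_t$ --- is asserted rather than proved; to establish it one must apply It\^o's formula to $\tu_x(\bar X_t,\bar H_t)$ along an arbitrary solution and use the HJB equation, which requires exactly the piecewise $C^1$ regularity of $f$ and the boundedness/Lipschitz estimates on the feedback coefficients that the paper develops in Lemmas \ref{lemma: 5} and \ref{lemma: pi_Lip}. In short, the regularity analysis you hoped to bypass is still needed, and without it neither your existence nor your uniqueness argument closes.
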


\begin{proof}
The proof is given in Section \ref{sec:proofC3.1}.
\end{proof}

\section{Properties of Optimal Controls}\label{sec: long_run}
First, comparing with the main results in the standard Merton's problem with power utility (see \cite{Mert1969RES}), our optimal feedback controls $\pi^*(x,h)$ and $c^*(x,h)$ are fundamentally different, as they are expressed as the piecewise implicit nonlinear functions of both variables $x$ and $h$. In particular, our optimal consumption process exhibits jumps when the wealth level crosses the threshold $x_\mathrm{zero}(h)$. The more complicated solution structure is rooted in the path-dependent reference process $H_t$ inside the utility and the S-shaped utility accounting for the loss aversion.

Moreover, based on Corollary \eqref{cor: beta<1}, we can show some asymptotic results on the optimal consumption-wealth ratio $c_t^*/X_t^*$ and the optimal portfolio-wealth ratio $\pi_t^*/X_t^*$, whose proof is given in Section \ref{sec: proof_asy_infty_wealth}.

\begin{corollary}\label{cor: asy_infty_wealth}
As $x\leq x_{\text{lavs}}(h)$, the asymptotic behavior of large wealth $x\rightarrow+\infty$ is equivalent to $\lim_{h\rightarrow+\infty}x_{\text{lavs}}(h)=+\infty$. We then have that
\begin{align*}
\lim_{h\rightarrow+\infty} \frac{c^*(x_{\text{lavs}}(h),h)}{x_{\text{lavs}}(h)}=L_1,\quad\quad \lim_{h\rightarrow+\infty} \frac{\pi^*(x_{\text{lavs}}(h),h)}{x_{\text{lavs}}(h)}=L_2{\cred,}
\end{align*}
for some constants $L_1$ and $L_2$. In addition, as $\lam\rightarrow0$, two limits $L_1$ and $L_2$ coincide with the asymptotic results in the infinite-horizon Merton's problem \cite{Mert1969RES} with power utility $U^*(x) = \frac{1}{\beta_1}x^{\beta_1}$. As a result, the boundary conditions \eqref{eq: boundary_x->infty_2} and \eqref{eq: boundary_dual_y->0_2} hold in our problem.
\end{corollary}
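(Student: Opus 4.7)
The plan is to exploit the explicit piecewise formulas of Corollary \ref{cor: beta<1} evaluated along the free boundary $x = x_\mathrm{lavs}(h)$. On that boundary, by construction $f(x_\mathrm{lavs}(h),h) = y_3(h) = (1-\lam)^{\beta_1} h^{\beta_1-1}$, and a direct substitution into the formula for $c^*$ gives $c^*(x_\mathrm{lavs}(h),h) = h$, while the optimal portfolio simplifies to
\begin{equation*}
\pi^*(x_\mathrm{lavs}(h), h) = \frac{\mu-r}{\sigma^2}\cdot\frac{2r}{\kappa^2}\bigl[C_5(h)\, y_3(h)^{r_1-1} + C_6(h)\, y_3(h)^{r_2-1}\bigr].
\end{equation*}
So both target limits reduce to understanding the leading-order behavior of $x_\mathrm{lavs}(h)$ and of the bracketed expression as $h\to+\infty$. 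The equivalence between $x\to+\infty$ (for $x\leq x_\mathrm{lavs}(h)$) and $h\to+\infty$ follows because $x_\mathrm{lavs}(\cdot)$ is continuous, monotone and unbounded: this is a consequence of the strict convexity of $v(\cdot,h)$ provided by Lemma \ref{lemma: beta<1} combined with the identity $v_y(y_3(h),h) = -x_\mathrm{lavs}(h)$.

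The core step is to pin down the asymptotic scalings of $C_5(h)$ and $C_6(h)$ by a dominant-balance argument. Since $y_1(h), y_2(h), y_3(h) \to 0$ and the effective utility on Region III is $\beta_1$-homogeneous in $h$, a natural ansatz is $C_5(h)\sim \alpha_5\, h^{1-(\beta_1-1)(r_1-1)}$ and $C_6(h)\sim \alpha_6\, h^{1-(\beta_1-1)(r_2-1)}$, chosen so that $C_5(h)y_3(h)^{r_1-1}$ and $C_6(h)y_3(h)^{r_2-1}$ are of order $h$. I would verify this ansatz using the explicit definitions in \eqref{paraC}, together with the behavior of the implicit tangent point $z(h)$ extracted from \eqref{eq: beta<1,tagent z} (which forces $z(h)/h$ to converge to a positive constant). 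The contributions coming from $C_3(h), C_4(h)$ and from the $k(\lam h)^{\beta_2}$ terms of Region I decay faster than the leading Region III contribution---since $\beta_2<1$ and $y_1(h)$ decays at least as fast as $y_3(h)$---and therefore enter only at subleading order. This yields
\begin{equation*}
L_1 = \lim_{h\to\infty}\frac{h}{x_\mathrm{lavs}(h)},\qquad L_2 = \lim_{h\to\infty}\frac{\pi^*(x_\mathrm{lavs}(h),h)}{x_\mathrm{lavs}(h)},
\end{equation*}
in closed form in terms of $\alpha_5,\alpha_6,r_1,r_2,r,\kappa,\mu,\sigma,\lam,\beta_1$.

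For the Merton limit, I send $\lam\to 0$ inside those closed-form expressions. At $\lam=0$ the reference term disappears from the utility, the loss branch is never active on the boundary, and the residual HJB ODE in Region III coincides with the one solved by Merton for $U(c) = c^{\beta_1}/\beta_1$; the constants $\alpha_5,\alpha_6$ then collapse to those obtained by solving that classical ODE directly, yielding the familiar Merton consumption and portfolio ratios. Once $L_1,L_2$ are established, the boundary conditions \eqref{eq: boundary_x->infty_2} and \eqref{eq: boundary_dual_y->0_2} follow by evaluating $\tu_x = y_3(h)$ and $v_y(y_3(h),h) = -x_\mathrm{lavs}(h)$ along the free boundary and invoking the limit $h/x_\mathrm{lavs}(h) \to L_1$. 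The principal obstacle is the bookkeeping in the dominant-balance step: one must rigorously control the implicit function $z(h)$ for large $h$ through \eqref{eq: beta<1,tagent z}, and then show via the (nested) definitions in \eqref{paraC} that the subleading $\beta_2$-terms and Region I--II contributions do not disturb the conjectured linear-in-$h$ scaling of $x_\mathrm{lavs}(h)$.
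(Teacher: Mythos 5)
Your overall route is the same as the paper's: evaluate the feedback formulas along $x=x_{\mathrm{lavs}}(h)$ (where indeed $f=y_3(h)$ and $c^*=h$), reduce everything to the $h\to+\infty$ scalings of $C_5(h)$ and $C_6(h)$, and then send $\lam\to0$. Your conjectured exponents are also correct: since $r_1+r_2=1$, your $h^{1-(\beta_1-1)(r_1-1)}$ and $h^{1-(\beta_1-1)(r_2-1)}$ are exactly the paper's $h^{r_1+r_2\beta_1}$ and $h^{r_2+r_1\beta_1}$.

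The genuine gap is in your dominant-balance step, where you assert that the contributions from $C_3(h)$, $C_4(h)$ and from the $k(\lam h)^{\beta_2}$ terms ``decay faster'' and only enter at subleading order. This is false. By \eqref{paraC}, $C_5(h)=C_3(h)+[\cdots]$, and Lemma \ref{remark: C_order} shows $C_3(h)$ is of the \emph{same} order $h^{r_1+r_2\beta_1}$ (up to the $h^{\beta_2-\beta_1}$ correction) as $C_5(h)$; in the paper's computation $\lim_h C_3(h)h^{-r_1-r_2\beta_1}$ is a nonzero constant that appears explicitly in $L_1$ and $L_2$ through $w_0^{r_2(\beta_1-1)}(r_2w_0+\lam(\gamma_1-r_1))$. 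Moreover, the loss-aversion data $(k,\beta_2)$ do not drop out: the tangent equation \eqref{eq: tagent z_eq} forces $w_0:=\lim_h w(h)/h$ to equal $-\lam\gamma_1$, $w(1)$, or $0$ according to whether $\beta_2<\beta_1$, $\beta_2=\beta_1$, or $\beta_2>\beta_1$ (so $z(h)/h$ does converge to a positive constant, but its value---and hence $L_1,L_2$---depends on this trichotomy), and when $\beta_2=\beta_1$ the term $\frac{k}{\beta_2}\lam^{\beta_2}$ survives directly in the limits (the $\mathbf{1}_{\{\beta_2=\beta_1\}}$ terms in the paper). A complete argument also needs the separate case in which $y_1(h)=y_2(h)$ for all large $h$ (Remark \ref{paraRm}), where $C_5(h)=\frac{h}{r(r_1-r_2)}y_1(h)^{r_2}$ has a different closed form. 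Your argument does recover the correct Merton limit as $\lam\to0$ (since all these extra contributions vanish with $\lam$), but as written it would produce wrong constants $L_1,L_2$ for general $\lam$, $k$, $\beta_2$.
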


\begin{remark}
As the wealth level gets sufficiently large, both the optimal consumption and the optimal portfolio amount are asymptotically proportional to the wealth level that $c^*_t\approx L_1 X^*_t$ and $\pi^*_t\approx L_2X^*_t$, in a similar fashion to the asymptotic results in the standard Merton's problem with the power utility. However, it is important to note that our asymptotic limits differ significantly from the ones in the Merton's problem, which now sensitively depends on the reference degree parameter $\lambda$ and risk aversion parameters $\beta_1, \beta_2, k$ from the S-shaped utility. One can see that, even when the agent's wealth level is very high, the impacts from the reference level $\lambda H_t$ and the loss aversion preference do not fade out in our model due to the fact that the large consumption rate $c^*_t$ also lifts up the reference $\lambda H_t^*$ to a new high level. Only in the extreme case when the reference degree $\lam\rightarrow0$, i.e., there is no reference process, our asymptotic results coincide with the ones in the standard Merton's problem under the power utility.

\end{remark}

Next, we can characterize the average fraction of time that the agent expects to stay in each region.
\begin{corollary}\label{thm: long_run}
The following properties hold:
\begin{enumerate}
\item The long-run fraction of time that the agent stays in the region $\{x_\mathrm{aggr}(H^*_t)\leq X^*_t \leq x_\mathrm{lavs}(H^*_t)\}$ equals $\lim\limits_{h\rightarrow+\infty} \frac{y_2(h)}{y_1(h)}$.
\item The long-run fraction of time that the agent stays in the region $\{0\leq X^*_t\leq x_\mathrm{zero}(H^*_t)\}$ equals $1-\lim\limits_{h\rightarrow+\infty} \frac{y_3(h)}{y_1(h)}$.

\item Starting from $(x,h)\in\{(x,h): x\in(x_\mathrm{zero}(h), x_\mathrm{lavs}(h)]\}$, let us consider the first hitting time of zero consumption that $\tau_\mathrm{zero} := \inf\{t\geq0, X_t = x_\mathrm{zero}(H_t)\}$. We have that
    $$
    \E[\tau_\mathrm{zero}] = \overline{C}_1(h)f(x,h)^{2} + \overline{C}_2(h) + \frac{\log f(x,h)}{\kappa^2},
    $$
    where $\overline{C}_1(h)$ and  $\overline{C}_2(h)$ satisfy:
    $$
    \begin{aligned}
    &\overline{C}_1(h)y_1(h)^2 + \overline{C}_2(h) + \frac{\log y_1(h)}{\kappa^2} = 0, \\
    &\overline{C}_1'(h)y_3(h)^2 + \overline{C}_2'(h) = 0.
    \end{aligned}
    $$
\item Starting from $(x,h)\in\{(x,h): x\in[x_\mathrm{zero}(h), x_\mathrm{lavs}(h))\}$, we define the first hitting time to update the historical consumption maximum $\tau_\mathrm{lavs} := \inf\{t\geq0: X_t = x_\mathrm{lavs}(H_t)\}$. We have that
    $$
    \E[\tau_\mathrm{lavs}] = \frac{2}{\kappa^2}\log\bigg(\frac{f(x,h)h^{1-\beta_1}}{(1-\lam)^{\beta_1}}\bigg).
    $$

\end{enumerate}
\end{corollary}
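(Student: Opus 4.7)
The plan is to work in the dual coordinates of Theorem~\ref{thm: beta<1}. Writing $Y_t^*=y^*e^{rt}M_t$, one has $dY_t^*/Y_t^*=-\kappa\,dW_t$, so $Y^*$ is a driftless geometric Brownian motion whose logarithm is a Brownian motion with drift $-\kappa^2/2$ and volatility $\kappa$. The explicit formula for $H_t^*$ supplied in Theorem~\ref{thm: beta<1} gives the key identification $y_3(H_t^*)=y_3(h)\wedge I_t$ where $I_t:=\inf_{s\le t}Y_s^*$; in particular, once $I_t$ has drifted below $y_3(h)$ (which happens in finite time almost surely since $\log Y^*$ has negative drift), the reference is pinned to the running infimum of $Y^*$.

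For Parts (1) and (2), introduce the log-ratio $L_t:=\log(Y_t^*/y_3(H_t^*))$. L\'evy's representation of a drifted Brownian motion minus its running infimum shows that $L_t$ is, after the initial transient, a reflected Brownian motion on $[0,\infty)$ with drift $-\kappa^2/2$ and volatility $\kappa$; it is ergodic with stationary law $\mathrm{Exp}(1)$. Rewriting the consumption regions in $L$-coordinates,
\begin{equation*}
\{c_t^*=0\}=\{L_t>\log(y_1(H_t^*)/y_3(H_t^*))\},\quad \{c_t^*=H_t^*\}=\{L_t\in[0,\log(y_2(H_t^*)/y_3(H_t^*)))\},
\end{equation*}
and noting $H_t^*\to\infty$ a.s.\ so that each threshold $\log(y_i(H_t^*)/y_3(H_t^*))$ converges to $\log\lim_{h\to\infty} y_i(h)/y_3(h)$ (whose existence follows from \eqref{eq: y_curves_beta<1} together with the asymptotic scaling of $z(h)/h$), the Birkhoff ergodic theorem applied to $L$ evaluates the long-run fractions as the stationary probabilities of these limiting intervals.

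For Part (3), set $u(y,h):=\mathbb{E}[\tau_{\mathrm{zero}}\mid Y_0^*=y,H_0=h]$ on $\{y_3(h)\le y< y_1(h)\}$. In the interior $H_t^*$ is frozen, so Dynkin's formula produces the Cauchy--Euler equation $\tfrac12\kappa^2 y^2 u_{yy}(y,h)=-1$, whose general solution takes the form $u(y,h)=\bar C_1(h)\phi_1(y)+\bar C_2(h)\phi_2(y)+\psi(y)$ with $\phi_1,\phi_2$ the two homogeneous solutions and $\psi$ a particular solution involving $\log y$. At the target impose $u(y_1(h),h)=0$. For the free boundary at $y=y_3(h)$, apply It\^o's formula to $u(Y_t^*,H_t^*)+t$: the interior PDE takes care of the martingale part on $\{Y_t^*>y_3(H_t^*)\}$, while $H_t^*$ contributes a finite-variation term supported on the singular set $\{Y_t^*=I_t\}$ with coefficient proportional to $u_h/y_3'(H_t^*)$. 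Using $dH_t^*=dI_t/y_3'(H_t^*)$ together with $\int_0^t\mathbf{1}\{Y_s^*=I_s\}\,dY_s^*=I_t-I_0$, the martingale requirement forces the Neumann-type condition $u_h(y_3(h),h)=0$. Substituting into these two boundary conditions then determines $\bar C_1(h)$ and $\bar C_2(h)$.

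Part (4) follows most directly: while $I_t>y_3(h)$ the reference satisfies $H_t^*=h$, so $\tau_{\mathrm{lavs}}=\inf\{t:Y_t^*=y_3(h)\}$ is the first passage of $Y^*$ to a fixed level below its start. Since $\log Y_t^*-\log y=-\tfrac12\kappa^2 t-\kappa W_t$ is a Brownian motion with negative drift, the classical one-sided formula yields $\mathbb{E}[\tau_{\mathrm{lavs}}]=\tfrac{2}{\kappa^2}\log(y/y_3(h))$, and substituting $y_3(h)=(1-\lambda)^{\beta_1}h^{\beta_1-1}$ recovers the stated expression. The main obstacle lies in Part (3): the singular nature of $dH_t^*$, supported on a Lebesgue-null set, demands care in separating the martingale and finite-variation parts of the It\^o decomposition so as to extract the correct free-boundary condition $u_h=0$; Parts (1) and (2) additionally require verifying existence of $\lim_{h\to\infty} y_i(h)/y_3(h)$ from the implicit function $z(h)$.
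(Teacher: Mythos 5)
Your route for Parts (1), (2) and (4) is essentially the paper's: pass to the dual variable, identify $y_3(H_t^*)$ with $y_3(h)\wedge\inf_{s\le t}Y_s^*$, use the $\mathrm{Exp}(1)$ stationary law of the reflected log-process for the occupation times (the paper outsources this step to Theorem 5.1 of Guasoni et al., which is precisely the L\'evy-representation/ergodic argument you sketch), and use the one-sided passage formula for Part (4). One caveat: you stop short of evaluating the stationary probabilities. Carrying the computation to the end gives $1-\lim_{h\to\infty}y_3(h)/y_2(h)$ for the region in Part (1) and $\lim_{h\to\infty}y_3(h)/y_1(h)$ for Part (2); these are the values the paper's own proof terminates with, and they are not literally the expressions displayed in the corollary, so you need to state explicitly which value your argument produces rather than leaving the evaluation implicit.

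The substantive gap is in Part (3), and it is a mismatch between your plan and the formula you are asked to prove. Since $dY_t^*=-\kappa Y_t^*\,dW_t$, Dynkin's formula in the interior indeed yields your equation $\tfrac{1}{2}\kappa^2 y^2u_{yy}=-1$; but its homogeneous solutions are $1$ and $y$ and a particular solution is $\tfrac{2}{\kappa^2}\log y$, so imposing $u(y_1(h),h)=0$ and $u_h(y_3(h),h)=0$ on your general solution produces an expression of the form $\overline{C}_1(h)f(x,h)+\overline{C}_2(h)+\tfrac{2}{\kappa^2}\log f(x,h)$. The identity you must prove, $\overline{C}_1(h)f(x,h)^{2}+\overline{C}_2(h)+\tfrac{1}{\kappa^2}\log f(x,h)$, instead solves the equation the paper actually posits, namely $\tfrac{\kappa^2}{2}y^2u_{yy}-\tfrac{\kappa^2}{2}yu_{y}=-1$, whose homogeneous solutions are $1$ and $y^{2}$. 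So your plan, executed as written, cannot terminate at the stated closed form: you must either justify the first-order term $-\tfrac{\kappa^2}{2}yu_y$ (which the driftless dynamics of $Y^*$ do not supply) or accept that your route proves a different formula. I note that the paper's own It\^o step, which equates the $dt$-drift of the transformed process with $-1$ and a martingale plus a $dH^*$ term, is only consistent with the driftless generator — i.e., with your equation — so the discrepancy appears to originate in the paper's choice of ODE; but as a verification of the displayed statement this remains a gap you have to confront and resolve explicitly rather than glide past.
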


We next present some numerical examples of the thresholds and the optimal feedback functions and discuss some financial implications.

\begin{figure}[htb]
\centering
\includegraphics[width=0.85\textwidth]{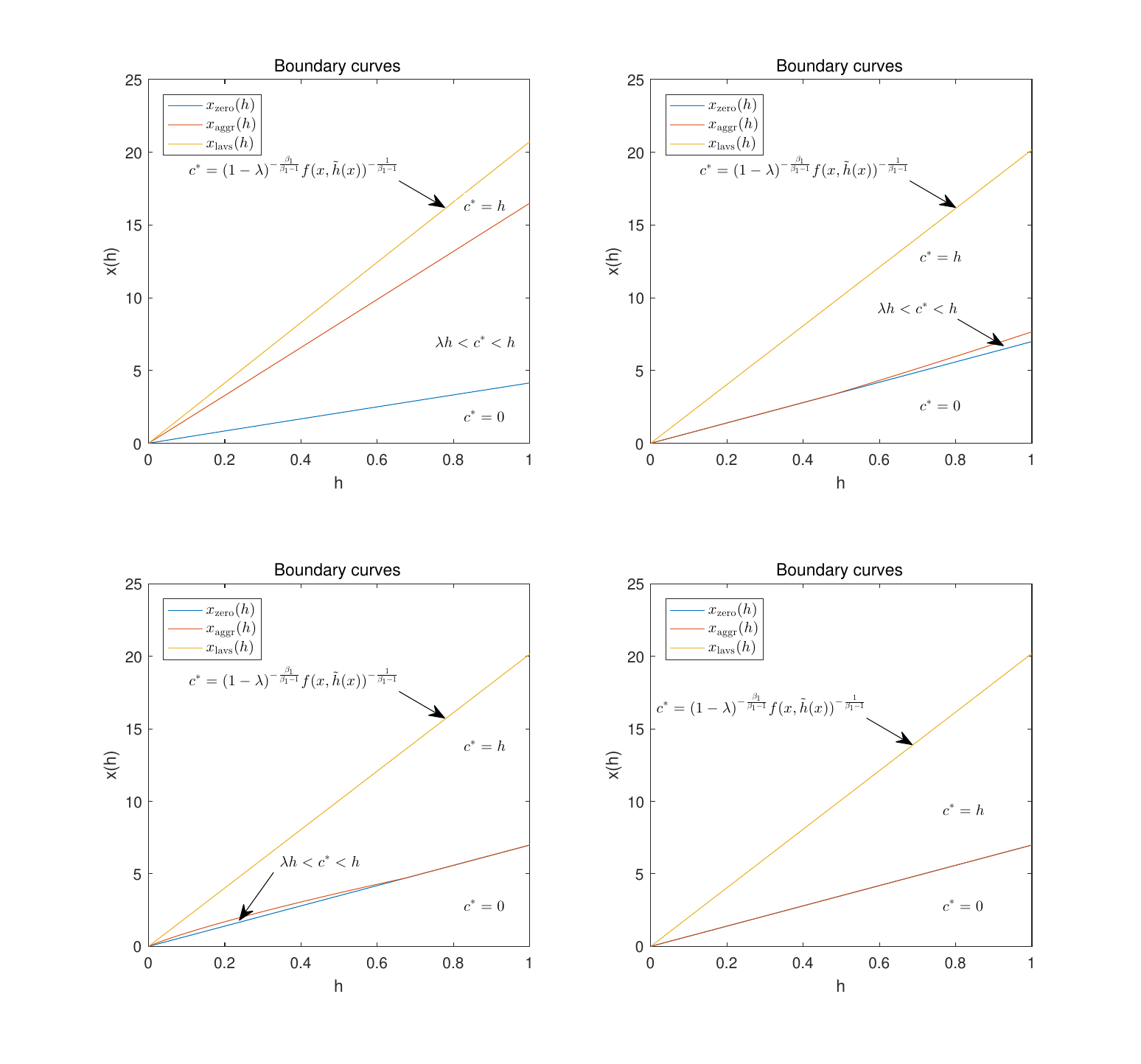}
\vspace{-0.3in}
\caption{{\small Four cases of boundary curves caused by different parameters}}
\label{fig: pic_free_boundary_beta<1_1}
\end{figure}

We first plot in Figure \ref{fig: pic_free_boundary_beta<1_1} the boundary curves $x_\mathrm{zero}(h)$, $x_\mathrm{aggr}(h)$ and $x_\mathrm{lavs}(h)$ as functions of $h$, separating the regions for different feedback forms of the optimal consumption.
First, comparing with Figure 1 in Deng et al. \cite{DengLiPY2020arXiv}, it is interesting to note that we need to distinguish four different cases in total depending on whether two boundary curves $x_\mathrm{zero}(h)$ and $x_\mathrm{aggr}(h)$ may coincide or not. To be more precise, we know by definition that $x_\mathrm{zero}(h) = x_\mathrm{aggr}(h)$ if and only if $y_1(h) = y_2(h)$, where $y_1(h)$ and $y_2(h)$ are given in \eqref{eq: y_curves_beta<1}.
In view of Remark \ref{paraRm}, $y_1(h)=y_2(h)$ in three different scenarios.
The upper left panel in Figure \ref{fig: pic_free_boundary_beta<1_1} corresponds to the case that two boundaries $x_\mathrm{zero}(h)$ and $x_\mathrm{aggr}(h)$ are completely separated for all $h>0$, i.e., $y_1(h)>y_2(h)$ for $h>0$ (with parameters $r=\rho=0.05$, $\mu=0.1$, $\sigma=0.25$, $\beta_1=0.2$, $\beta_2=0.3$, $k=1.5$, $\lam=0.5$);
the upper right panel in Figure \ref{fig: pic_free_boundary_beta<1_1} corresponds to the case that two boundaries $x_\mathrm{zero}(h)=x_\mathrm{aggr}(h)$ when the reference level is low that $h\leq h^*$ for some critical point $h^*>0$ (with parameters $r=\rho=0.05$, $\mu=0.1$, $\sigma=0.25$, $\beta_1=0.2$, $\beta_2=0.3$, $k=1.5$, $\lam=0.92$);
the lower left panel in Figure \ref{fig: pic_free_boundary_beta<1_1} corresponds to the case that two boundaries $x_\mathrm{zero}(h)=x_\mathrm{aggr}(h)$ when the reference level is high that $h\geq h^*$ for some $h^*>0$ (with parameters $r=\rho=0.05$, $\mu=0.1$, $\sigma=0.25$, $\beta_1=0.2$, $\beta_2=0.1$, $k=1.5$, $\lam=0.973$); and the lower right panel in Figure \ref{fig: pic_free_boundary_beta<1_1} corresponds to the case that $x_\mathrm{zero}(h)=x_\mathrm{aggr}(h)$ for all $h\geq 0$ (with parameters $r=\rho=0.05$, $\mu=0.1$, $\sigma=0.25$, $\beta_1=0.2$, $\beta_2=0.2$, $k=1.5$, $\lam=0.95$).

Second, Figure \ref{fig: pic_free_boundary_beta<1_1} illustrates again that the positive optimal consumption can never fall below the reference level, i.e., we must have $c^*(x,h)>\lambda h$ if $c^*(x,h)>0$ so that there exists a jump when the wealth process $X_t^*$ crosses the boundary curve $x_\mathrm{zero}(H_t^*)$.
In particular, for some value of $h$ such that $x_\mathrm{zero}(h) = x_\mathrm{aggr}(h)$ hold, the optimal consumption may jump from $0$ to the current maximum level $H_t^*=h$ immediately, indicating that the agent consumes at the historical maximum level $h$ if the agent starts to consume.
This differs substantially from the continuous optimal consumption process derived in Deng et al. \cite{DengLiPY2020arXiv}.
The jump of consumption is caused by the risk-loving attitude over the loss domain in the S-shaped utility, which corresponds to the linear piece of the concave envelop. In this wealth region, the agent prefers to stop the current consumption if it cannot surpass the reference level.
Therefore, our result under the S-shaped utility can depict the extreme behavior of some agents who cannot endure any positive consumption plan below the current reference.
We emphasize that, all the boundary curves in Figure \ref{fig: pic_free_boundary_beta<1_1}  are generally nonlinear functions of $h$, featuring the necessity of two dimensional state processes of $X_t$ and $H_t$ in our control problem. Only in the extreme case when $\beta_1=\beta_2$, the boundary curves can be expressed in a linear manner, and the dimension reduction can be conducted.

\begin{remark}
When risk aversion parameters satisfy $\beta_1=\beta_2$,  our problem has a homogeneous property that $\tilde{u}(x,h) = h^{\beta_1}\tilde{u}(x/h,1)$, and it is sufficient to consider the function $\hat{u}(\omega) := \tilde{u}(\omega, 1)$ to reduce the dimension. In this case, the boundary curves degenerate to boundary points for the new state variable $\omega=x/h$.
\end{remark}

Fix the model parameters $r=0.05$, $\rho=0.05$, $\mu = 0.1$, $\sigma=0.25$, $\beta_1=0.2$, $\beta_2=0.3$, $k=1.5$, and reference level $h=1$.
We numerically illustrate the sensitivity with respect to the reference degree $\lam$ with values $\lam\in\{0.1,0.3, 0.5, 0.7,0.9\}$.
The value function, the optimal feedback consumption, and the optimal feedback portfolio together with marked boundaries $x_\mathrm{zero}$, $x_\mathrm{aggr}$ and $x_\mathrm{lavs}$ are plotted in Figure \ref{fig: pic_lam}.
From all panels, one can observe that the boundary curve $x_\mathrm{zero}(1;\lam)$ is increasing in $\lam$, while boundary curves $x_\mathrm{aggr}(1;\lam)$ and $x_\mathrm{lavs}(1;\lam)$ are both decreasing in $\lam$. On one hand, one can explain that, when the agent has higher reference with the larger $\lambda$ and the current wealth is low, it is more likely that the optimal consumption falls below the reference, leading to zero consumption. Therefore, the threshold for positive consumption is getting larger for a larger $\lambda$. On the other hand, when the wealth is sufficiently large, larger reference degree $\lambda$ results in more aggressive consumption (see the middle panel) and it is more likely that the agent may lower the threshold to consume at the global maximum level even by reducing the portfolio amount. 

Moreover, when $\lam$ increases, we can also observe that $\lam H_t^*$ actually increases faster than the consumption $c_t^*$ during the life cycle, which leads to a drop of $c_t^* -\lam H_t^*$ and a decline in the value function from the left panel. From the right panel, when wealth decreases to the region $x < x_\mathrm{zero}(1;\lam)$, the optimal consumption stays at 0 due to the linear piece of the concave envelop, but the optimal portfolio is increasing in $x$ with a large slope. This can be interpreted by the fact that the agent needs to invest very aggressively to pull the wealth level back to the threshold $x_\mathrm{zero}(1;\lam)$ driven by the strong desire of positive consumption under the loss aversion preference. When wealth starts to surpass the threshold $x_\mathrm{zero}(1;\lam)$, the agent chooses the positive consumption above the reference level $\lambda h$, and the right panel shows that the agent may strategically withdraw some wealth from the risky asset account to support the high consumption plan. In addition, the higher reference degree $\lambda$ is, the more drastic decreasing in portfolio with respect to $x$ can be observed. When wealth tends to be larger, both the optimal consumption and optimal portfolio become increasing in $x$. The right panel shows that, when wealth is very large, the optimal portfolio is decreasing in the reference degree $\lam$, which is consistent with the fact that the agent needs more cash to support the more aggressive consumption as $\lam$ increases.

\begin{figure}[htbp]
\centering
\includegraphics[width=\textwidth]{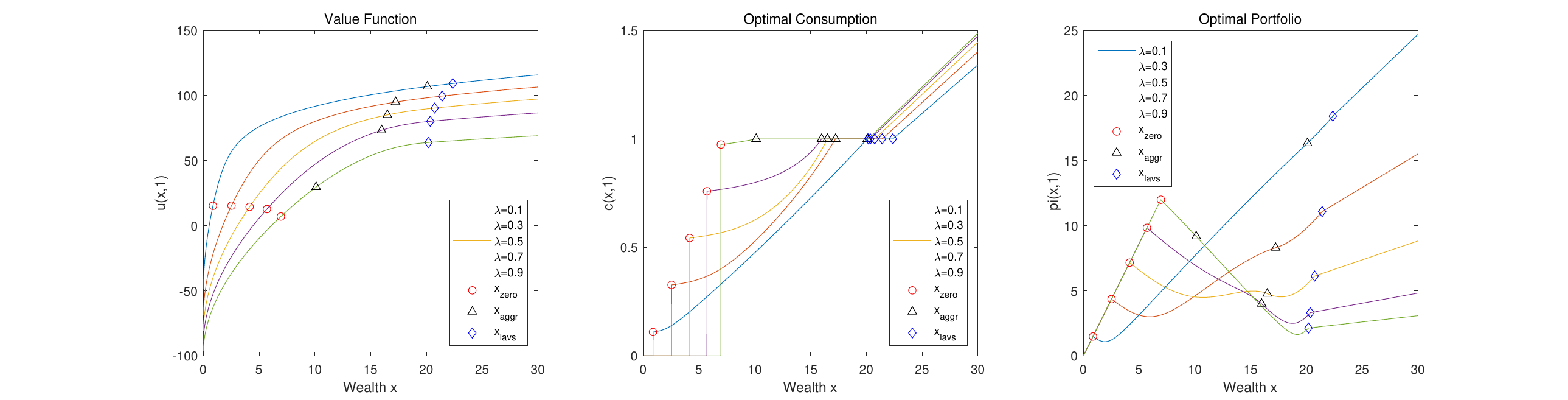}
\vspace{-0.2in}
\caption{{\small Sensitivity analysis on the reference degree $\lambda$.}}
\label{fig: pic_lam}
\end{figure}

Finally, we illustrate the sensitivity with respect to the expected return $\mu$ of the risky asset.
Fix model parameters $r=0.05$, $\rho=0.05$, $\lambda = 0.5$, $\sigma=0.25$, $\beta_1=0.2$, $\beta_2=0.3$, $k=1.5$, $h=1$ and consider $\mu\in\{0.06, 0.08, 0.1, 0.12, 0.14\}$. The value function and the optimal feedback controls together with marked boundaries are plotted in Figure \ref{fig: pic_mu}. First, the left panel shows that the value function increases in $\mu$, which matches with the intuition that the better market performance guarantees the higher wealth and the larger consumption plan. From all panels, it is interesting to observe that both boundaries $x_\mathrm{zero}(1;\mu)$ and $x_\mathrm{aggr}(1;\mu)$ are decreasing in $\mu$, while the boundary $x_\mathrm{lavs}(1;\mu)$ is increasing in $\mu$. On one hand, the higher return from the financial market secures the better wealth growth, leading to lower thresholds for the agent to start the positive consumption and the consumption at the historical peak level. On the other hand, higher return rate $\mu$ also motivates the agent to invest more in the risky asset as one can see the optimal portfolio is increasing in $\mu$ from the right panel. As a result, the agent does not blindly lower the threshold $x_\mathrm{lavs}(1;\mu)$ to create the new global consumption peak as it becomes more beneficial in the long run to invest more cash into the risky asset when $x$ is sufficiently large. Therefore, the threshold $x_\mathrm{lavs}(1;\mu)$ actually increases with $\mu$. 

One can also observe that for $x_\mathrm{zero}(1;\mu)\leq x\leq x_\mathrm{lavs}(1;\mu)$, as the expected return $\mu$ increases, the agent gradually shifts from the willingness of high consumption plan by sacrificing the portfolio to the more aggressive investment behavior to accumulate the larger wealth. Figures \ref{fig: pic_lam} and \ref{fig: pic_mu} also show that the optimal portfolio $\pi^*(x,1;\lambda,\mu)$ is decreasing in $\lambda$ but increasing in $\mu$ when $x$ is large, suggesting that some agents with the large reference degree $\lambda$  only invest more wealth in the financial market when the expected return is excessively high. This is consistent to and may partially help to explain the \textit{equity premium puzzle} (see Mehra and Prescott \cite{Mehra}) that the market premium needs to be very high to attract some agents (possibly those agents with the large reference degree if they adopt our proposed preference on consumption performance) to actively invest in the risky asset.

\begin{figure}[htbp]
\centering
\includegraphics[width=\textwidth]{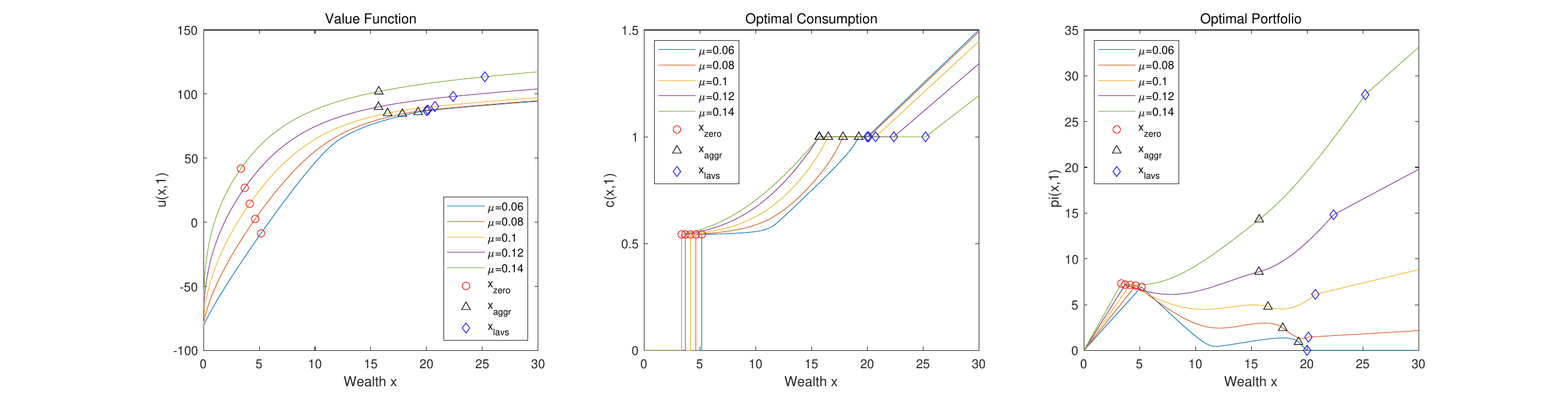}
\vspace{-0.2in}
\caption{{\small Sensitivity analysis on the expected return $\mu$.}}
\label{fig: pic_mu}
\end{figure}

\section{Proofs}\label{sec: proof}
\subsection{Proof of Proposition \ref{prop: dual_solution_beta<1}}
It is straightforward to see that the linear ODE \eqref{eq: LODE_beta<1} admits the general solution
\begin{equation*}
v(y,h) = \begin{cases}
C_1(h)y^{r_1} + C_2(h)y^{r_2} - \frac{k}{r\beta_2}(\lam h)^{\beta_2}, & \mbox{if } y> y_1(h),  \\
C_3(h)y^{r_1} + C_4(h)y^{r_2} + \frac{2}{\kappa^2\gamma_1(\gamma_1-r_1)(\gamma_1-r_2)}y^{\gamma_1} - \frac{\lam h}{r}y, & \mbox{if } y_2(h) \leq y \leq y_1(h), \\
C_5(h)y^{r_1} + C_6(h)y^{r_2} + \frac{1}{r\beta_1}((1-\lam)h)^{\beta_1} - \frac{h}{r}y, & \mbox{if } y_3(h) \leq y < y_2(h),
\end{cases}
\end{equation*}
where $C_1(\cdot), \cdots, C_6(\cdot)$ are functions of $h$ to be determined.

The free boundary condition $v_y(y,h)\rightarrow0$ in \eqref{eq: boundary_dual_y->infty} implies that $y\rightarrow+\infty$.
Together with free boundary conditions in \eqref{eq: boundary_dual_y->infty} and the formula of $v(y,h)$ in the region $y\geq y_1(h)$, we deduce that $C_1(h)\equiv 0$. To determine the remaining parameters, we consider the smooth-fit conditions with respect to the variable $y$ along $y = y_1(h)$ and $y = y_2(h)$ that
\begin{equation}\label{eq: dual_smooth_beta<1}
\begin{aligned}
&-C_3(h)y_1(h)^{r_1} + (C_2(h)-C_4(h))y_1(h)^{r_2} \\
&= \frac{k}{r\beta_2}(\lam h)^{\beta_2} + \frac{2}{\kappa^2\gamma_1(\gamma_1-r_1)(\gamma_1-r_2)}y_1(h)^{\gamma_1} - \frac{\lam h}{r}y_1(h), \\
&-r_1C_3(h)y_1(h)^{r_1-1} + r_2(C_2(h)-C_4(h))y_1(h)^{r_2-1} \\
&= \frac{2}{\kappa^2(\gamma_1-r_1)(\gamma_1-r_2)}y_1(h)^{\gamma_1-1} - \frac{\lam h}{r}, \\
&(C_3(h)-C_5(h))y_2(h)^{r_1} + (C_4(h)-C_6(h))y_2(h)^{r_2}  \\
&= \frac1{r\beta_1}((1-\lam)h)^{\beta_1}- \frac{2}{\kappa^2\gamma_1(\gamma_1-r_1)(\gamma_1-r_2)}y_2(h)^{\gamma_1}-\frac{(1-\lam)h}{r}y_2(h) , \\
& r_1(C_3(h)-C_5(h))y_2(h)^{r_1-1} + r_2(C_4(h)-C_6(h))y_2(h)^{r_2-1} \\
&= -\frac{2}{\kappa^2(\gamma_1-r_1)(\gamma_1-r_2)}y_2(h)^{\gamma_1-1}
-\frac{(1-\lam)h}{r}.
\end{aligned}
\end{equation}
The equations in \eqref{eq: dual_smooth_beta<1} can be treated as linear equations for $C_3(h)$, $C_2(h) - C_4(h)$, and $C_3(h)-C_5(h)$ and $C_4(h) - C_6(h)$.
By solving the system of equations, we can obtain
\begin{equation*}
\begin{aligned}
C_3(h) &= \frac{y_1(h)^{-r_1}}{r(r_1-r_2)}\bigg(\frac{kr_2}{\beta_2}(\lam h)^{\beta_2} + \frac{r_1r_2}{\gamma_1(\gamma_1-r_1)}y_1(h)^{\gamma_1} + \lam h r_1 y_1(h) \bigg),\\
C_2(h) - C_4(h) &= \frac{y_1(h)^{-r_2}}{r(r_1-r_2)}\bigg(\frac{kr_1}{\beta_2}(\lam h)^{\beta_2} + \frac{r_1r_2}{\gamma_1(\gamma_1-r_2)}y_1(h)^{\gamma_1} + \lam h r_2 y_1(h) \bigg),\\
C_3(h) - C_5(h) &= \frac{y_2(h)^{-r_1}}{r(r_1-r_2)}\bigg( -\frac{r_2}{\beta_1}((1-\lam)h)^{\beta_1}+ \frac{r_1r_2}{\gamma_1(\gamma_1-r_1)}y_2(h)^{\gamma_1} - (1-\lam) h r_1y_2(h) \bigg),\\
C_4(h) - C_6(h) &= \frac{y_2(h)^{-r_2}}{r(r_1-r_2)}\bigg( \frac{r_1}{\beta_1}((1-\lam)h)^{\beta_1} - \frac{r_1r_2}{\gamma_1(\gamma_1-r_2)}y_2(h)^{\gamma_1} + (1-\lam) h r_2y_2(h) \bigg).
\end{aligned}
\end{equation*}
Therefore, $C_2(h)$ to $C_5(h)$ can be expressed by \eqref{paraC}. To solve $C_2(h),C_4(h)$ and $C_6(h)$, we find $C_6(h)$ first, and $C_4(h)$ and $C_2(h)$ can then be determined.

Indeed, as $h\rightarrow+\infty$, we get $y\rightarrow0$ in the region $y_3(h) \leq y < y_2(h)$, and the boundary condition \eqref{eq: boundary_dual_y->0_2} leads to
$\lim\limits_{h\rightarrow+\infty }
\frac{h}{v_y(y_3(h),h)} = C$, where $C$ is a positive constant.
Along the free boundary, we have
$v_y(y_3(h),h) = r_1C_5(h)y_3(h)^{r_1-1} + r_2C_6(h)y_3(h)^{r_2-1} + \frac{h}{r}$. It follows from $\lim\limits_{h\rightarrow+\infty }
\frac{h}{v_y(y_3(h),h)} > 0$ that $v_y(y_3(h), h) = O(h)$ as $h\rightarrow+\infty$.
Therefore, we deduce that
$C_6(h) = O(C_5(h)h^{(r_1-r_2)(\beta_1-1)}) + O(h^{r_1(\beta_1-1) +1 })$.
By Lemma \ref{remark: C_order} and the definition $y_3(h) = (1-\lam)^{\beta_1}h^{\beta_1-1}$, it follows that
$$\begin{aligned}
C_6(h) &= O(C_5(h)h^{(r_1-r_2)(\beta_1-1)}) + O(h^{r_1(\beta_1-1)+1})  \\
&= O(h^{(r_1-r_2)(\beta_1-1) + r_2\beta_1+r_1+(\beta_2-\beta_1) }) + O(h^{(r_1-r_2)(\beta_1-1) + r_2\beta_1+r_1}) + O(h^{r_1\beta_1+r_2}) \\
&= O(h^{r_1\beta_2+r_2}) + O(h^{r_1\beta_1+r_2}),
\end{aligned}$$
as $h\rightarrow+\infty$, where the last equation holds because
$$\min(r_1\beta_1+r_2, r_1\beta_2+r_2) \leq r_1\beta_1+r_2 + (\beta_2-\beta_1) \leq \max(r_1\beta_1+r_2, r_1\beta_2+r_2).$$
From \textbf{Assumption (A1)}, it follows that $\lim\limits_{h\rightarrow+\infty} C_6(h) = 0$.
Therefore, we can write $C_6(h) = -\int_h^\infty C_6'(s)ds$. We then apply the free boundary condition \eqref{eq: free_boundary_dual} at $y_3(h) = (1-\lam)^{\beta_1}h^{\beta_1-1}$ that
$$
C'_5(h)y_3(h)^{r_1} + C'_6(h)y_3(h)^{r_2} + \frac{1}{r}(1-\lam)^{\beta_1}h^{\beta_1-1} - \frac{1}{r}y_3(h) = 0,
$$
which implies the desired result of $C_6(h)$ in \eqref{paraC}.

\subsection{Proof of Theorem  \ref{thm: beta<1} (Verification Theorem)}\label{sec:vefproof}
The proof of the verification theorem boils down to show that the solution of the auxiliary HJB variational inequality \eqref{eq: HJB_eqn} coincides with the value function, i.e. there exists $(\pi^*, c^*)\in\mathcal{A}(x)$ such that $\tu(x,h) = \mathbb{E}[ \int_0^\infty e^{-rt}\tU(c_t^*, H_t^*)dt ]$.
For any admissible strategy $(\pi, c) \in \mathcal{A}(x)$, we have $\mathbb{E}[ \int_0^\infty c_tM_tdt] \leq x$ by the supermartingale property and the standard budget constraint argument, see Karatzas et al. \cite{KLSX}.

Let $(\lam, h)$ be regarded as parameters, the dual transform of $U$ with respect to $c$ in the constrained domain that $V(q,h) := \sup_{c\geq0} [\tU(c, h)-cq]$ defined in \eqref{eq: LODE_beta<1}. Moreover, $V$ can be attained by the construction of the feedback optimal control $c^\dag(y,h)$ in \eqref{eq: c_y_beta<1}.

In what follows, we distinguish the two reference processes, namely $H_t := h\vee \sup_{s\leq t}c_s$ and $H_t^\dag(y):= h\vee \sup_{s\leq t} c^\dag(Y_s(y), H_s^\dag(y))$ that correspond to the reference process under an arbitrary consumption process $c_t$ and under the optimal consumption process $c^\dag$ with an arbitrary $y>0$.
Note that the global optimal reference process is defined later by $H_t^* := H_t^\dag(y^*)$ with $y^*>0$ to be determined.
Let us now further introduce
\begin{equation}\label{eq: H_hat}
\hat{H}_t(y) := h \vee
\bigg((1-\lam)^{-\frac{\beta_1}{\beta_1-1}} (\inf_{s\leq t}Y_s(y))^{\frac{1}{\beta_1-1}} \bigg),
\end{equation}
where $Y_t(y) = ye^{rt}M_t$ is the discounted martingale measure density process.

For any admissible controls $(\pi, c) \in \mathcal{A}(x)$, recall the reference process $H_t = h\vee \sup_{s\leq t}c_s$, and for all $y>0$, we see that
\begin{equation}\label{eq: verification_main}
\begin{aligned}
\mathbb{E}\bigg[ \int_0^\infty e^{-rt}\tU(c_t, H_t)dt \bigg]
&= \mathbb{E}\bigg[ \int_0^\infty e^{-rt}(\tU(c_t, H_t)-Y_t(y)c_t)dt \bigg]
+ y\mathbb{E}\bigg[ \int_0^\infty c_tM_tdt \bigg] \\
&\leq \mathbb{E}\bigg[ \int_0^\infty e^{-rt} V(Y_t(y), H_t^\dag(y))dt \bigg] + yx \\
&= \mathbb{E}\bigg[ \int_0^\infty e^{-rt} V(Y_t(y), \hat{H}_t(y))dt \bigg] + yx \\
&= v(y,h) + yx,
\end{aligned}
\end{equation}
the third equation holds because of Lemma \ref{lemma: 2}, and the last equation is verified by Lemma \ref{lemma: 1}.
In addition, Lemma \ref{lemma: 3} guarantees the equality with the choice of $c_t^* = c^\dag(Y_t(y^*), H_t^\dag(y^*))$, in which $y^*$ satisfies that $\mathbb{E}\big[ \int_0^\infty c^\dag(Y_t(y^*), H_t^\dag(y^*))M_tdt \big] = x$ for a given $x\in\mathbb{R}_+$ and $h\geq0$. In conclusion, we have that
\begin{align*}
\sup\limits_{(\pi, c)\in\mathcal{A}(x)}
\mathbb{E}\bigg[ \int_0^\infty e^{-rt}\tU(c_t, H_t) dt\bigg]
= \inf_{y>0}(v(y,h) + yx) = \tu(x,h).
\end{align*}

Then we prove some auxiliary results that have been used above.
We first need some asymptotic results on the coefficients in Proposition \ref{prop: dual_solution_beta<1}.
\begin{lemma}\label{remark: C_order}
Based on the semi-analytical forms in Proposition \eqref{prop: dual_solution_beta<1}, we have that
$$\begin{aligned}
C_2(h) &= O\big( h^{\beta_2}w(h)^{-r_2(\beta_1-1)} \big) + O\big(w(h)^{r_1\beta_1+r_2}\big) + O\big(w(h)^{(\gamma_2-r_2)(\beta_1-1)}\big), \\
C_3(h) &= O\big(w(h)^{r_2\beta_1+r_1+(\beta_2-\beta_1)}\big) + O\big(w(h)^{r_2\beta_1+r_1}), \\
C_4(h) &= O\big(h^{r_1\beta_1+r_2 + (\beta_2-\beta_1)}\big) +O \big(h^{r_1\beta_1+r_2}\big), \\
C_5(h) &= O\big(h^{r_2\beta_1+r_1+(\beta_2-\beta_1)}\big) + O\big(h^{r_2\beta_1+r_1}\big), \\
C_6(h) &= O\big(h^{r_1\beta_1+r_2 + (\beta_2-\beta_1)}\big) +O \big(h^{r_1\beta_1+r_2}\big),
\end{aligned}$$
where $\gamma_2 = \frac{\beta_2}{\beta_2-1}$, and $w(h)$ is defined in \eqref{eq: def_w}.
Moreover, the function $w(h)$ satisfies
$$
w(h) = O(h), ~~w(h)^{-1} = O(h^{-1}) + O(h^{-\frac{\beta_2-1}{\beta_1-1}}), ~~h = O(w(h)) + O(w(h)^{\frac{\beta_2-1}{\beta_1-1}}),~~ h^{-1} = O(w(h)^{-1}).
$$
\end{lemma}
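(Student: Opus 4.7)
The plan is to proceed in three stages: first establish the asymptotic orders of $w(h)$ as $h\to\infty$; then translate these into orders for $y_1(h),y_2(h),y_3(h)$; and finally substitute into the closed-form expressions \eqref{paraC} to read off the orders of $C_2(h),\ldots,C_6(h)$.

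First, I would analyze $w(h)$. In Subcase (ii) of Section \ref{sec: ce}, $w(h)=(1-\lam)h$ and every claimed bound on $w$ is trivial, so attention is on Subcase (i). Substituting $w=z-\lam h$ into the defining identity \eqref{eq: beta<1,tagent z} and simplifying yields the implicit equation
\begin{equation*}
\frac{1-\beta_1}{\beta_1}w(h)^{\beta_1}-\lam h\, w(h)^{\beta_1-1}=\frac{k(\lam h)^{\beta_2}}{\beta_2}.
\end{equation*}
Since the right-hand side is strictly positive and $w^{\beta_1-1}>0$, the bracket $\tfrac{1-\beta_1}{\beta_1}w-\lam h$ must be positive, giving $w(h)>\tfrac{\lam\beta_1}{1-\beta_1}h$; combined with the built-in upper bound $w(h)<(1-\lam)h$ this already delivers $w(h)=\Theta(h)$, hence $w(h)=O(h)$ and $h=O(w(h))$. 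To get the second-order refinement $w(h)^{-1}=O(h^{-1})+O(h^{-(\beta_2-1)/(\beta_1-1)})$ and its dual $h=O(w(h))+O(w(h)^{(\beta_2-1)/(\beta_1-1)})$, I would rearrange the implicit equation to the form $w^{1-\beta_1}[\tfrac{1-\beta_1}{\beta_1}w-\lam h]=\tfrac{\beta_1 k\lam^{\beta_2}h^{\beta_2}}{\beta_2(1-\beta_1)}\cdot w^{\beta_1-1}\cdot w^{1-\beta_1}$ (after cleaning) and perform a standard dominant-balance analysis separating $\beta_1>\beta_2$ and $\beta_1<\beta_2$; the exponent $(\beta_2-1)/(\beta_1-1)$ emerges by matching $hw^{\beta_1-1}$ with $h^{\beta_2}$.

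Second, with $z(h)=\lam h+w(h)=\Theta(h)$ in hand I would substitute into \eqref{eq: y_curves_beta<1}. This immediately gives $y_3(h)=\Theta(h^{\beta_1-1})$, while $y_1(h)=\tfrac{k(\lam h)^{\beta_2}}{\beta_2 z(h)}+\tfrac{w(h)^{\beta_1}}{\beta_1 z(h)}=O(h^{\beta_2-1})+O(w(h)^{\beta_1}h^{-1})$, and $y_2(h)=\min(y_1(h),((1-\lam)h)^{\beta_1-1})=O(h^{\beta_1-1})$. These coarse orders are what feeds into the next stage.

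Third, I would plug the orders of $y_1(h),y_2(h)$ into \eqref{paraC}. For $C_3(h)$, the prefactor $y_1(h)^{-r_1}$ and the three bracket terms $h^{\beta_2}$, $y_1(h)^{\gamma_1}$, $h\,y_1(h)$ combine, with the dominant contribution coming from the $h^{\beta_2}$ piece (and its mixture with $w^{\beta_1}$ through $y_1$), yielding the two power laws in $w$ listed in the lemma; the calculation for $C_5(h)$ is structurally identical via $y_2(h)$. For $C_4(h)$ and $C_2(h)$ I would use the additive decompositions $C_4=C_6+(\cdot)$ and $C_2=C_4+(\cdot)$ from \eqref{paraC}: the added corrections are pure power laws in $h$ (for $C_4$) and in $h$ and $y_1$ (for $C_2$), so their orders follow by direct inspection once $C_6$ is bounded. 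The key step for $C_6$ is to differentiate the already-established order of $C_5$ to get $C_5'(s)=O(s^{r_2\beta_1+r_1+(\beta_2-\beta_1)-1})+O(s^{r_2\beta_1+r_1-1})$, then plug into the integral formula $C_6(h)=\int_h^\infty(1-\lam)^{(r_1-r_2)\beta_1}C_5'(s)s^{(r_1-r_2)(\beta_1-1)}ds$. Assumption \textbf{(A1)} ($\beta_j<-r_2/r_1$) is exactly the hypothesis that makes the integrand decay fast enough at infinity for the improper integral to converge and to be of the orders claimed for $C_6$.

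The main obstacle is the bookkeeping in the third stage: each $C_i$ is the sum of several terms whose exponents depend on both $\beta_1$ and $\beta_2$, and one must track which term is dominant to conclude the stated $O$-estimate without any accidental cancellations. The implicit analysis of $w(h)$ in stage one, while conceptually the source of difficulty, simplifies dramatically once one notices the positivity argument that pins down $w(h)=\Theta(h)$; after that, stages two and three are essentially algebraic manipulation of power laws that can be verified term by term.
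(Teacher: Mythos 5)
Your first stage rests on a sign error, and the cornerstone claim $w(h)=\Theta(h)$ is false. The tangency condition, written correctly, is the requirement that the slope $y_1(h)=\frac{k(\lam h)^{\beta_2}}{\beta_2 z(h)}+\frac{w(h)^{\beta_1}}{\beta_1 z(h)}$ of the linear piece of the envelope equal ${U_1^*}'(z(h),h)=w(h)^{\beta_1-1}$; equivalently (this is the paper's equation \eqref{eq: tagent z_eq}, and note that \eqref{eq: beta<1,tagent z} as printed carries a sign slip in front of $U_2^*(0,h)$),
\begin{equation*}
\frac{1-\beta_1}{\beta_1}w(h)^{\beta_1}-\lam h\,w(h)^{\beta_1-1}=-\frac{k}{\beta_2}(\lam h)^{\beta_2},
\end{equation*}
with a \emph{negative} right-hand side. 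Your positivity argument therefore yields $\frac{1-\beta_1}{\beta_1}w(h)-\lam h<0$, i.e.\ only the upper bound $w(h)<\frac{\lam\beta_1}{1-\beta_1}h$, not the lower bound you claim. Indeed $w(h)=\Theta(h)$ genuinely fails when $\beta_2>\beta_1$: as $h\to\infty$ the middle term of \eqref{eq: tagent z_eq} blows up like $h^{\beta_2-\beta_1}$, forcing $\lam(w(h)/h)^{\beta_1-1}\to\infty$, hence $w(h)/h\to0$ and $w(h)\asymp h^{(\beta_2-1)/(\beta_1-1)}$ with exponent strictly less than $1$. This is precisely why the lemma retains the terms $O(h^{-(\beta_2-1)/(\beta_1-1)})$ and $O(w(h)^{(\beta_2-1)/(\beta_1-1)})$, and why $y_1(h)$, $C_2(h)$ and $C_3(h)$ must be estimated in powers of $w(h)$ rather than $h$. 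Under your (false) $w=\Theta(h)$ all these distinctions collapse, so the third stage cannot deliver the stated estimates; the two-case dominant balance ($\beta_1<\beta_2$ versus $\beta_1>\beta_2$, each as $h\to\infty$ and as $h\to0$) that you defer to a "refinement" is in fact the essential step, and it is what the paper carries out.

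A second gap: for $C_6$ you propose to differentiate the already-established bound $C_5(h)=O(h^{\alpha})$ to conclude $C_5'(h)=O(h^{\alpha-1})$. An $O$-estimate on a function does not control its derivative. The paper avoids this by differentiating the explicit formulas for $C_3$ and $C_3-C_5$ term by term, which in turn requires the orders of $y_1'(h)$, $y_2'(h)$ and $w'(h)$ (in particular $w'(h)>0$, $w'(h)=O(1)$ and $hw'(h)=O(w(h))$, obtained by implicit differentiation of the tangency equation). None of these derivative estimates appear in your outline, and they cannot be bypassed.
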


\begin{proof}
We first discuss the asymptotic results of $y_1(h)$ and $y_2(h)$.
It is easy to see that $(1-\lam)^{\beta_1}h^{\beta_1-1} = y_3(h) < y_2(h) \leq ((1-\lam)h)^{\beta_1-1} $ and thus $y_2(h) = O(h^{\beta_1-1})$, $y_2(h)^{-1} = O(h^{1-\beta_1})$.
Moreover, if $y_1(h) > y_2(h)$, we have $y_1(h) = w(h)^{\beta_1-1}$; if $y_1(h) = y_2(h)$, indicating that $w(h) = (1-\lam)h$, thus we have $ y_1(h) \leq ((1-\lam)h)^{\beta_1-1} = w(h)^{\beta_1-1}$ and $y_1(h) > y_3(h) = (1-\lam)^{\beta_1}h^{\beta_1-1} = (1-\lam)w(h)^{\beta_1-1}$.
Therefore, we have $y_1(h) = O(w(h)^{\beta_1-1})$ and $y_1(h)^{-1} = O(w(h)^{1-\beta_1})$.

To obtain the asymptotic properties of $C_2(h)$ to $C_6(h)$, we need to derive the asymptotic property of $w(h)$.
If $y_1(h) > y_2(h)$, the equation \eqref{eq: beta<1,tagent z} indicates that
\begin{equation}\label{eq: tagent z_eq}
\frac{1-\beta_1}{\beta_1} \bigg(\frac{w(h)}{h}\bigg)^{\beta_1} + \frac{k}{\beta_2}\lam^{\beta_2}h^{\beta_2-\beta_1} - \lam \bigg(\frac{w(h)}h\bigg)^{\beta_1-1} = 0,
\end{equation}
where $0<\beta_1<1$, $0<\beta_2<1$, $0<\lam<1$ and $h>0$.
We obtain the asymptotic property of $w$ in two cases: $\beta_1<\beta_2$ and $\beta_1>\beta_2$ respectively.
In the sequel of the proof below, let $C>0$ be a generic positive constant independent of $(x,h)$, which may be different from line to line.
If $\beta_1 < \beta_2$, as $h\rightarrow+\infty$,
the second term of equation \eqref{eq: tagent z_eq} goes to infinity,
yielding $(\frac{w(h)}{h})^{\beta_1-1} - Ch^{\beta_2-\beta_1} \rightarrow0$ and thus $w \geq C h^{\frac{\beta_2-1}{\beta_1-1}}$;
as $h\rightarrow0$, the second term goes to 0,
yielding $(\frac{w(h)}{h})^{\beta_1} - C(\frac{w(h)}{h})^{\beta_1-1}\rightarrow0$ and thus $w(h)\geq Ch$.
If $\beta_1 > \beta_2$, we can similarly obtain that $w(h) \geq Ch$ and $w(h)\geq Ch^{\frac{\beta_2-1}{\beta_1-1}}$ as $h$ goes to infinity and 0 respectively.
Together with the fact that $w(h) \leq (1-\lam)h$, we deduce that
\begin{equation*}
h = O(w(h)) + O(w(h)^{\frac{\beta_1-1}{\beta_2-1}}) , ~\mathrm{and}~
            h^{-1} = O(w(h)^{-1}) .
\end{equation*}

If $y_1(h) = w(h)^{\beta_1-1} > y_2(h) = ((1-\lam)h)^{\beta_1-1}$, then $y_1'(h) = (\beta_1-1)w(h)^{\beta_1-2}w'(h) = O\big(w(h)^{\beta_1-2}w'(h)\big)$, and $y_2'(h) = O\big( h^{\beta_1-2} \big)$.
If $y_1(h) = y_2(h) = \frac{k}{\beta_2}\lam^{\beta_2}h^{\beta_2-1} + \frac{1}{\beta_1}(1-\lam)^{\beta_1}h^{\beta_1-1} $,
then $w(h) = (1-\lam)h$, $w'(h) = 1-\lam$, and thus $y_1'(h) = \frac{k}{\gamma_2}\lam^{\beta_2}h^{\beta_2-2} + \frac{1}{\gamma_1}(1-\lam)^{\beta_1}h^{\beta_1-1} = O\big( h^{-1}y_1(h) \big) = O(h^{-1}w(h)^{\beta_1-1}) = O(w(h)^{\beta_1-2}w'(h)),$
and $y_2'(h) = O\big( \frac{y_2(h)}{h} \big) = O\big( h^{\beta_1-2} \big)$.
In summary, we have $y_1'(h) = O(w(h)^{\beta_1-2}w'(h))$ and $y_2'(h) = O\big( h^{\beta_1-2} \big)$.

We further discuss the asymptotic property of $w'(h)$.
If $w(h) = (1-\lam)h$, it is obvious that $w'(h) = 1-\lam = O(1)$.
Otherwise, 
we have
$$
w'(h) = \frac{\lam }{1-\beta_1}\cdot \frac{w(h)^{\beta_1-1} - k(\lam h)^{\beta_2-1}}{w(h)^{\beta_1-1} + \lam h w(h)^{\beta_1-2}}.
$$
Since $\lam h w(h)^{\beta_1-1} > \frac{k}{\beta_2}(\lam h)^{\beta_2} > k(\lam h)^{\beta_2}$, we can derive that $w'(h) > 0$, $w'(h) < C$, $w'(h) = O(1)$, and $hw'(h) = O(w(h))$.

Based on the asymptotic property of $y_1(h)$ and $y_2(h)$, we can find the asymptotic results of $C_2(h)$ to $C_6(h)$. Let us begin with $C_3(h)$ and $C_5(h)$.
It is easy to see that
$$
C_3(h) = O(w(h)^{r_2\beta_1+r_1+(\beta_2-\beta_1)}) + O(w(h)^{r_2\beta_1+r_1}).
$$
Note that
$$\begin{aligned}
C_3(h) &= \frac{1}{r(r_1-r_2)}\bigg\{\frac{kr_2}{\beta_2}(\lam h)^{\beta_2}y_1(h)^{-r_1} + \frac{r_1r_2}{(\gamma_1-r_1)\gamma_1}y_1(h)^{\gamma_1-r_1} + \lam h r_1 y_1(h)^{r_2} \bigg\} \\
&= C^1h^{\beta_2}y_1(h)^{-r_1} + C^2y_1(h)^{\gamma_1-r_1} + C^3hy_1(h)^{r_2},
\end{aligned}$$
and
$$\begin{aligned}
C_3(h) - C_5(h) &= \frac{y_2(h)^{-r_1}}{r(r_1-r_2)}\bigg( -\frac{r_2}{\beta_1}((1-\lam)h)^{\beta_1}+ \frac{r_1r_2}{\gamma_1(\gamma_1-r_1)}y_2(h)^{\gamma_1} - (1-\lam) h r_1y_2(h) \bigg)\\
&= C^1h^{\beta_1}y_2(h)^{-r_1} + C^2y_2(h)^{\gamma_1-r_1} + C^3 hy_2(h)^{r_2},
\end{aligned}$$
where $C^1$ to $C^3$ are discriminant constants in each equation.
Then by $y_1(h) = O\big( w(h)^{\beta_1-1} \big)$, $y_1(h)^{-1} = O\big( w(h)^{1-\beta_1} \big) = O\big( h^{1-\beta_1} \big)$, $y_1'(h) = O\big( w(h)^{\beta_1-2}w'(h) \big)$, $y_2(h) = O\big( h^{\beta_1-1} \big)$, $y_2(h)^{-1} = O\big( h^{1-\beta_1} \big)$, $y_2'(h) = O\big( h^{\beta_1-2} \big)$,
$w(h) = O(h)$, $w'(h) = O(1)$ and $hw'(h) = O(w(h))$, we have
$$\begin{aligned}
C_3'(h) =& C^1 h^{\beta_2-1}y_1(h)^{-r_1} + C^2 h^{\beta_2}y_1(h)^{-r_1-1}y_1'(h)
+ C^3 y_1(h)^{\gamma_1-r_1-1}y_1'(h) \\
&+ C^4 y_1(h)^{r_2} + hy_1(h)^{r_2-1}y_1'(h) \\
=& O\big( h^{r_2(\beta_1-1)+(\beta_2-\beta_1)} \big) + O\big( h^{r_2(\beta_1-1)} \big),
\end{aligned}$$
and
$$\begin{aligned}
C_3'(h) - C_5'(h) =& C^1h^{\beta_1-1}y_2(h)^{-r_1} + C^2 h^{\beta_1}y_2(h^{-r_1-1}y_2'(h)) + C^3y_2(h)^{\gamma_1-r_1-1}y_2'(h) \\
&+ C^4y_2(h)^{r_2} + C^5 hy_2(h)^{-r_2-1}y_2'(h) \\
=& O(h^{r_2(\beta_1-1)+(\beta_2-\beta_1)}) + O(h^{r_2(\beta_1-1)}) ,
\end{aligned}$$
where $C^1$ to $C^5$ are discriminant constants, and thus
$$
C_5'(h) = O(h^{r_2(\beta_1-1)+(\beta_2-\beta_1)}) + O(h^{r_2(\beta_1-1)}).
$$

Recall that
$$\begin{aligned}
C'_6(h) = -(1-\lam)^{(r_1-r_2)\beta_1} C'_5(h) h^{(r_1-r_2)(\beta_1-1)}= O(h^{r_1(\beta_1-1) + (\beta_2-\beta_1)})
+O(h^{r_1(\beta_1-1)}).
\end{aligned}$$
We can get the asymptotic property of $C_6(h)$ that
$$\begin{aligned}
C_6(h) &= -\int_h^\infty C_6'(h)dh = O(h^{r_1\beta_1+r_2 + (\beta_2-\beta_1)})
+O(h^{r_1\beta_1+r_2}) .
\end{aligned}$$ Finally, it follows that
$$
C_4(h) = O(h^{r_1\beta_1+r_2 + (\beta_2-\beta_1)}) +O (h^{r_1\beta_1+r_2}).
$$
and
$$C_2(h) = O(h^{\beta_2}w(h)^{-r_2(\beta_1-1)}) + O(w(h)^{r_1\beta_1+r_2}) + O(w(h)^{(\gamma_2-r_2)(\beta_1-1)}), $$
in view that $h = O(w(h)) + O\big(w(h)^{\frac{\beta_1-1}{\beta_2-1}}\big)$.
\end{proof}

Following similar proofs of Lemma 5.1 and Lemma 5.2 in Deng et al. \cite{DengLiPY2020arXiv} and using asymptotic results in Lemma \ref{remark: C_order}, we can readily obtain the next two lemmas.
\begin{lemma}\label{lemma: 1}
For any $y>0$ and $h\geq0$, the dual transform $v(y,h)$ of value function $\tu(x,h)$ satisfies
$$
v(y,h) = \mathbb{E}\bigg[ \int_0^\infty e^{-rt}V(Y_t(y), \hat{H}_t(y)) dt\bigg],
$$
where $V(\cdot,\cdot)$ is defined in \eqref{eq: LODE_beta<1}, $Y_t(\cdot)$ and $\hat{H}_t(\cdot)$ are defined in \eqref{eq: H_hat}.
\end{lemma}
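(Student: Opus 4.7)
The plan is to recognize $v(y,h)$ as the Feynman--Kac representation associated with the ODE \eqref{eq: LODE_beta<1} together with the free boundary condition $v_h(y_3(h),h)=0$. Concretely, I would apply It\^o's formula to $e^{-rt}v(Y_t(y),\hat{H}_t(y))$ on $[0,T]$, verify that the drift collapses to $-e^{-rt}V(Y_t,\hat{H}_t)\,dt$, and then send $T\to\infty$.

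For the It\^o step, I first compute $dY_t=-\kappa Y_t\,dW_t$ (so $Y_t$ is a positive local martingale with $d\langle Y\rangle_t=\kappa^2 Y_t^2\,dt$), and observe that $\hat{H}_t$ is continuous, non-decreasing, and can only increase when $\hat{H}_t^{\beta_1-1}=(1-\lam)^{-\beta_1}Y_t$, i.e.\ exactly on the boundary curve $\{Y_t=y_3(\hat{H}_t)\}$ where the free boundary condition \eqref{eq: free_boundary_dual} forces $v_h=0$. Applying It\^o's formula, substituting \eqref{eq: LODE_beta<1}, and using the above support property to kill the $v_h\,d\hat{H}_t$ term yields
\begin{equation*}
e^{-rT}v(Y_T,\hat{H}_T)-v(y,h)=-\int_0^T e^{-rt}V(Y_t,\hat{H}_t)\,dt-\kappa\int_0^T e^{-rt}Y_t v_y(Y_t,\hat{H}_t)\,dW_t.
\end{equation*}
Taking expectations, once the stochastic integral is known to be a true martingale on $[0,T]$, gives $v(y,h)=\mathbb{E}[\int_0^T e^{-rt}V(Y_t,\hat{H}_t)\,dt]+\mathbb{E}[e^{-rT}v(Y_T,\hat{H}_T)]$.

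The hard part, as anticipated in the excerpt, is justifying the passage $T\to\infty$: the transversality condition $\mathbb{E}[e^{-rT}v(Y_T,\hat{H}_T)]\to 0$, the integrability of the running integral, and the martingale property of the stochastic term must all be checked. For this I would substitute the piecewise formula \eqref{eq: dual_solu_beta<1} and bound each summand by finitely many expressions of the form $e^{-rt}C_i(\hat{H}_t)Y_t^{\alpha}$ with $\alpha\in\{r_1,r_2,\gamma_1\}$ and $e^{-rt}\hat{H}_t^{\beta_j}$, then invoke the asymptotic estimates of $C_i(\cdot)$ from Lemma \ref{remark: C_order} together with standard moment bounds for $Y_t$ and for the running minimum $\inf_{s\leq t}Y_s$ that controls $\hat{H}_t$. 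Because $\mathbb{E}[e^{-rt}Y_t^{\alpha}]=y^{\alpha}\exp(\tfrac{\kappa^2}{2}\alpha(\alpha-1)t-rt)$ decays exponentially precisely when $r_2<\alpha<r_1$, \textbf{Assumption (A1)} is tailored so that every exponent appearing after pairing $C_i(h)$ with the corresponding $Y_t^{\alpha}$ lies in this favourable window. Dominated convergence then delivers both the transversality limit and the convergence of the running integral, while the same bounds supply the $L^2$ integrability of $e^{-rt}Y_t v_y(Y_t,\hat{H}_t)$ on every $[0,T]$, so the local martingale is indeed a true martingale. Passing to the limit produces the claimed identity, the argument closely mirroring the Feynman--Kac verification in Lemma 5.1 of Deng et al.\ \cite{DengLiPY2020arXiv}.
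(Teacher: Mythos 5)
Your proposal is correct and follows essentially the same route as the paper, which does not spell out the argument but defers to Lemma 5.1 of Deng et al.\ \cite{DengLiPY2020arXiv}: apply It\^o's formula to $e^{-rt}v(Y_t,\hat H_t)$, use the dual ODE \eqref{eq: LODE_beta<1} to reduce the drift to $-e^{-rt}V$, kill the $v_h\,d\hat H_t$ term via the free boundary condition since $\hat H$ increases only on $\{Y_t=y_3(\hat H_t)\}$, and pass to the limit $T\to\infty$. The estimates you invoke for transversality, integrability of the running integral, and the martingale property are exactly the paper's Lemmas \ref{remark: C_order}, \ref{lemma: 4}, \ref{lemma: claim1} and \ref{lemma: claim2}, so the proposal matches the intended proof.
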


\begin{lemma}\label{lemma: 2}
Let $V(\cdot,\cdot)$, $Y_t$, $H_t^*$ and $\hat{H}_t$ be the same as in Lemma \ref{lemma: 1}, then for all $y>0$, we have $H_t^\dag = \hat{H}_t(y)$, $t\geq0$, and hence
$$
\mathbb{E}\bigg[ \int_0^\infty e^{-rt} V(Y_t(y), H_t^\dag(y))dt \bigg]
= \mathbb{E}\bigg[ \int_0^\infty e^{-rt} V(Y_t(y), \hat{H}_t(y))dt \bigg].
$$
\end{lemma}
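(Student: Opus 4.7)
The plan is to establish the pathwise identity $H_t^\dag(y) = \hat{H}_t(y)$ for every $y>0$ and every $t\geq 0$; once this is proved, the equality of the two expected integrals follows immediately by substitution into $V(\cdot,\cdot)$. The argument is a Skorohod-reflection-type computation that exploits the piecewise structure of $c^\dag$ given in \eqref{eq: c_y_beta<1} together with the fact that the controlled state $(X_t^\dag,H_t^\dag)$ must remain in the effective domain $\mathcal{C}$.

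\textbf{Key step 1: locating the increase times of $H^\dag$.} Inspecting \eqref{eq: c_y_beta<1} region by region, I observe that $c^\dag(y,h)\leq h$ throughout $\mathcal{C}$, with equality holding only on the free boundary $y = y_3(h)$, where
\[
c^\dag(y,h) = (1-\lam)^{-\beta_1/(\beta_1-1)} y^{1/(\beta_1-1)} = h.
\]
Consequently, the non-decreasing process $H_t^\dag$ can strictly increase only at times $\tau$ at which $Y_\tau = y_3(H_\tau^\dag)$, and at such a time $\tau$ we have the explicit representation $H_\tau^\dag = (1-\lam)^{-\beta_1/(\beta_1-1)} Y_\tau^{1/(\beta_1-1)}$.

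\textbf{Key step 2: identifying the push-up value with a running infimum of $Y$.} Let $\tau\leq t$ denote the most recent increase time of $H^\dag$ prior to $t$ (if any). Since $H_s^\dag$ is constant on $[\tau,t]$ and $(X_s^\dag,H_s^\dag)$ must remain in $\mathcal{C}$, we have $Y_s\geq y_3(H_s^\dag)=y_3(H_\tau^\dag)=Y_\tau$ for all $s\in[\tau,t]$. An analogous argument between any two consecutive increase times $\tau_j<\tau_{j+1}$ shows $Y_s\geq Y_{\tau_j}$ on $[\tau_j,\tau_{j+1}]$. Moreover, since $y\mapsto y^{1/(\beta_1-1)}$ is strictly decreasing (as $\beta_1<1$) and $H^\dag$ is non-decreasing, the sequence of values $Y_{\tau_j}$ at successive push-up times is strictly decreasing in $j$, whence $Y_\tau = \inf_{s\leq t} Y_s$. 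It follows that
\[
H_t^\dag(y) = H_\tau^\dag = (1-\lam)^{-\beta_1/(\beta_1-1)} \bigl(\inf_{s\leq t} Y_s(y)\bigr)^{1/(\beta_1-1)}.
\]
If no push-up has occurred by time $t$, then $H_t^\dag = h$ and the constraint $\inf_{s\leq t} Y_s \geq y_3(h)$ forces the candidate expression above to lie below $h$; taking the maximum with $h$ in both cases yields $H_t^\dag(y) = \hat{H}_t(y)$.

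\textbf{Main obstacle.} The principal difficulty is the rigorous justification that push-ups of $H^\dag$ are confined to the boundary set $\{Y_s = y_3(H_s^\dag)\}$, and that $H^\dag$ itself has continuous paths (as flagged in Remark \ref{remark: predictable}) so that no jump contribution is missed in the running-supremum computation. This reduces to the piecewise case analysis across the three curves $y_1(h),y_2(h),y_3(h)$: in Regions I, II, and III-(i), the feedback $c^\dag$ satisfies $c^\dag(Y_s,H_s^\dag)\leq H_s^\dag$ pointwise, so only the boundary $y_3$ can drive the reference upward, and when that boundary is active, $c^\dag$ equals $H^\dag$ rather than overshooting it, keeping $H^\dag$ continuous. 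Handling these boundary crossings carefully—together with verifying that $Y_s\geq y_3(H_s^\dag)$ is indeed preserved under the feedback dynamics—is the technical crux of the argument.
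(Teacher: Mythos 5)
Your overall strategy -- identifying $H^\dag$ with the Skorokhod-type reflection of $Y$ off the curve $y_3(\cdot)$, so that the push-up value is governed by $\inf_{s\le t}Y_s$ -- is the right idea and is in the spirit of the argument the paper invokes (the paper itself gives no proof here, deferring to Lemmas 5.1--5.2 of Deng et al.). However, there is a genuine gap at your Key Step 1. The assertion that $c^\dag(y,h)\le h$ ``with equality holding only on the free boundary $y=y_3(h)$'' is false: by \eqref{eq: c_y_beta<1}, $c^\dag(y,h)=h$ identically throughout Region III-(i), i.e.\ for all $y_3(h)<y<y_2(h)$ (and also at $y=y_2(h)$ in Region II, where $\lam h+y_2(h)^{1/(\beta_1-1)}=h$). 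As a consequence, the implicit relation $H_t^\dag=h\vee\sup_{s\le t}c^\dag(Y_s,H_s^\dag)$ does \emph{not} confine the increase times of $H^\dag$ to $\{Y_s=y_3(H_s^\dag)\}$: a process $H$ that increases strictly while $(Y_s,H_s)$ remains inside Region III-(i) also satisfies the relation, because there $c^\dag(Y_s,H_s)=H_s$ and hence $\sup_{s\le t}c^\dag(Y_s,H_s)=H_t$ tautologically. Your ``Main obstacle'' paragraph flags exactly this point but then resolves it with the same inequality $c^\dag\le H^\dag$, which cannot rule out these spurious solutions precisely because the inequality is an equality on a region of positive size. So the converse direction you attempt (``any solution of the fixed-point equation equals $\hat H$'') fails without an additional minimality or flat-off condition that you have not imposed.

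The standard (and intended) repair is to run the argument in the verification direction: take $\hat H_t(y)$ as defined in \eqref{eq: H_hat} and check directly that it satisfies $h\vee\sup_{s\le t}c^\dag(Y_s(y),\hat H_s(y))=\hat H_t(y)$. The upper bound follows from $c^\dag(Y_s,\hat H_s)\le \hat H_s\le \hat H_t$ for $s\le t$. For the lower bound, when $\hat H_t>h$ the infimum $\inf_{u\le t}Y_u$ is attained at some $s^*\le t$ by path continuity; at that time $Y_{s^*}=\inf_{u\le s^*}Y_u$, so $\hat H_{s^*}=(1-\lam)^{-\beta_1/(\beta_1-1)}Y_{s^*}^{1/(\beta_1-1)}=\hat H_t$ and $Y_{s^*}=y_3(\hat H_{s^*})$, whence $c^\dag(Y_{s^*},\hat H_{s^*})=\hat H_{s^*}=\hat H_t$ and the supremum is attained. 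This establishes that $\hat H$ is consistent with the feedback recursion defining $H^\dag$, which is all the verification theorem requires; your Key Step 2 computation (monotonicity of $Y$ along increase times, the inequality $Y_s\ge y_3(H_s^\dag)\ge Y_\tau$) is essentially this argument read backwards and can be salvaged once the direction is reversed.
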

%
Let us then continue to prove some other auxiliary results.

\begin{lemma}\label{lemma: 3}
The inequality in \eqref{eq: verification_main} becomes equality with $c_t^* = c^\dag(Y_t(y^*), \hat{H}_t(y^*))$, $t\geq0$, with $y^* = y^*(x,h)$ as the unique solution to
\begin{equation}\label{eq: ECM<=x}
\mathbb{E}\bigg[\int_0^\infty c^\dag(Y_t(y^*), \hat{H}_t(y^*))M_tdt\bigg] = x.
\end{equation}
\end{lemma}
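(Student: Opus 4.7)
I would split the argument into two steps: (a) show that with the candidate $c_t^{\ast}=c^\dag(Y_t(y^{\ast}),\hat H_t(y^{\ast}))$ both inequalities used to obtain \eqref{eq: verification_main} become equalities; (b) exhibit a unique multiplier $y^{\ast}>0$ solving the budget equation \eqref{eq: ECM<=x}.

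For step (a) the argument is essentially Fenchel--Young attainment combined with Lemma \ref{lemma: 2}. The pointwise bound
\[
\tU(c,\hat H_t(y)) - Y_t(y)\,c \;\le\; V(Y_t(y),\hat H_t(y)), \qquad c\in[0,\hat H_t(y)],
\]
is attained with equality at $c=c^\dag(Y_t(y),\hat H_t(y))$, because the piecewise feedback \eqref{eq: c_y_beta<1} was built precisely as the unique maximizer of $\tU(c,h)-yc$ on $[0,h]$ in each of the three regions delimited by $y_1(h),y_2(h),y_3(h)$. Lemma \ref{lemma: 2} then identifies $H_t^{\dag}(y)=\hat H_t(y)$, so the first inequality in \eqref{eq: verification_main} collapses. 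The remaining inequality $y\,\mathbb{E}[\int_0^\infty c_t M_t\,dt]\le yx$ becomes an equality exactly when the budget constraint \eqref{eq: ECM<=x} binds, which is the defining property of $y^{\ast}$.

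For step (b) I would introduce $F(y):=\mathbb{E}[\int_0^\infty c^\dag(Y_t(y),\hat H_t(y))\,M_t\,dt]$ and relate it to the dual representation of Lemma \ref{lemma: 1}. Using the envelope identity $V_q(q,h)=-c^\dag(q,h)$ for $q\ge y_3(h)$ --- readily checked piecewise, as $V_q=0$ on $q>y_1(h)$, $V_q=-(\lambda h+q^{1/(\beta_1-1)})$ on $y_2(h)\le q\le y_1(h)$, and $V_q=-h$ on $y_3(h)\le q<y_2(h)$ --- and differentiating $v(y,h)=\mathbb{E}[\int_0^\infty e^{-rt}V(Y_t(y),\hat H_t(y))\,dt]$ under the expectation yields $v_y(y,h)=-F(y)$. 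By Lemma \ref{lemma: beta<1}, $v(\cdot,h)$ is strictly convex, so $F$ is continuous and strictly decreasing. The dual boundary conditions \eqref{eq: boundary_dual_y->0} and \eqref{eq: boundary_dual_y->infty} translate into $F(y)\to+\infty$ as $y\to 0^+$ and $F(y)\to 0$ as $y\to+\infty$, which delivers a unique $y^{\ast}=y^{\ast}(x,h)>0$ solving $F(y^{\ast})=x$ for every $x>0$.

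The main obstacle will be justifying rigorously the interchange of $\partial_y$ with $\mathbb{E}$ in the derivation $v_y=-F$, because $c^\dag(\cdot,h)$ has a jump at $y_1(h)$ and $\hat H_t(y)$ itself depends on $y$ through $\inf_{s\le t}Y_s(y)$. I plan to handle this by combining the explicit piecewise formula \eqref{eq: c_y_beta<1} with the polynomial growth bounds on the coefficients $C_i(h)$ from Lemma \ref{remark: C_order}, which together give a dominating integrable envelope uniform in $y$ on every compact subinterval. Admissibility of $c_t^{\ast}$ --- predictability, nonnegativity, and nonnegativity of the wealth process $X_t^{\ast}$ --- then follows from the standard martingale representation/budget-constraint argument, since the choice $y=y^{\ast}$ turns the relevant supermartingale into a true martingale.
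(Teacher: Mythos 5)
Your step (a) is exactly the paper's argument (Fenchel--Young attainment of $V$ by $c^\dag$ plus Lemma \ref{lemma: 2}), so that part is fine. The issue is in step (b). The paper establishes existence of $y^*$ directly from the feedback form: writing $c^\dag(Y_t(y),\hat H_t(y))=\hat H_t(y)F_t(y,Y_t(y))$, it sends $y\to0_+$ (so $\hat H_t(y)\to+\infty$ and the budget functional blows up) and $y\to+\infty$ (so $\hat H_t(y)\to h_+$, $F_t\to0_+$ and the functional vanishes), and then proves continuity in $y$ by a domination argument; the intermediate value theorem does the rest. You instead route everything through the identity $v_y(y,h)=-F(y)$ and the convexity of $v(\cdot,h)$.

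The gap is in your derivation of $v_y(y,h)=-F(y)$ by differentiating the representation of Lemma \ref{lemma: 1} under the expectation. The chain rule produces \emph{two} terms: besides $\mathbb{E}\big[\int_0^\infty e^{-rt}V_q(Y_t(y),\hat H_t(y))\,\partial_y Y_t(y)\,dt\big]=-F(y)$, there is the cross term
\begin{equation*}
\mathbb{E}\bigg[\int_0^\infty e^{-rt}V_h\big(Y_t(y),\hat H_t(y)\big)\,\partial_y\hat H_t(y)\,dt\bigg],
\end{equation*}
because $\hat H_t(y)=h\vee\big((1-\lam)^{-\frac{\beta_1}{\beta_1-1}}(\inf_{s\leq t}Y_s(y))^{\frac1{\beta_1-1}}\big)$ genuinely depends on $y$. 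This term does \emph{not} vanish pointwise: $\partial_y\hat H_t(y)\neq0$ on the whole event $\{\hat H_t(y)>h\}$, on which $Y_t(y)$ need only satisfy $Y_t(y)\geq y_3(\hat H_t(y))$ (the infimum, not the current value, sits on the free boundary), and $V_h(q,\tilde h)\neq0$ for $q>y_3(\tilde h)$. Showing that this expectation nonetheless vanishes is exactly the free-boundary/Skorokhod-type argument ($d\hat H_t$ charges only $\{Y_t=y_3(\hat H_t)\}$, where $v_h=0$) that underlies Lemma \ref{lemma: 1} itself, so your step (b) is either circular or missing its key ingredient; your proposed fix (dominated convergence to handle the jump of $c^\dag$ and the $y$-dependence of $\hat H_t$) addresses the wrong obstruction. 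Relatedly, the strict monotonicity of $F$ that you deduce from convexity of $v$ is contingent on the unproved identity; a cleaner and self-contained route to monotonicity is the pathwise one (as $y$ increases, $Y_t(y)$ increases and $\hat H_t(y)$ decreases, and $c^\dag(q,\tilde h)$ is non-increasing in $q$ and non-decreasing in $\tilde h$, so the integrand decreases pathwise), which is closer in spirit to what the paper does.
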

\begin{proof}
By the definition of $V$, it is obvious that for all $(\pi, c) \in \mathcal{A}(x)$, $\tU(c_t, H_t) - Y_t(y)c_t \leq V(Y_t(y), H_t)$.
Moreover, the inequality becomes an equality with the optimal feedback $c^\dag(Y_t(y), H_t^\dag(y))$.
Thus, it follows that
$$
\int_0^\infty e^{-rt}(\tU(c_t, H_t) - Y_t(y)c_t)dt \leq \int_0^\infty e^{-rt}V(Y_t(y), H_t^\dag(y))dt.
$$
To turn \eqref{eq: verification_main} into an equality, the equality of \eqref{eq: ECM<=x} needs to hold with some $y^*>0$ to be determined later, and
\begin{equation}\label{eq: UV_eq}
\tU(c_t, H_t) - Y_t(y) c_t = V(Y_t(y), H_t)
\end{equation}
also needs to hold.
Hence, we choose to employ
$c^\dag(Y_t(y), \hat{H}_t(y)) := \hat{H}_t(y)F_t(y, Y_t(y))$,
where
$$\begin{aligned}
F_t(y,z) &:= \mathbb{I}_{\{y_3(\hat{H}_t(y)) \leq z < y_2(\hat{H}_t(y))\}} + \bigg(\lam + \frac{z^{\frac1{\beta_1-1}}}{\hat{H}_t(y)}\bigg)\mathbb{I}_{\{y_2(\hat{H}_t(y)) \leq z < y_1(\hat{H}_t(y))\}}.
\end{aligned}$$

It follows from definition that:
(i) If $y\rightarrow0_+$, then $\hat{H}_t(y) \rightarrow +\infty$ and $F_t(y, Y_t(y)) > 0$, it indicates that $\mathbb{E}[\int_0^\infty M_tc^\dag(Y_t(y), \hat{H}_t(y))dt] \rightarrow+\infty$;
(ii) If $y\rightarrow+\infty$, then $\hat{H}_t(y) \rightarrow h_+$ and $F_t(y, Y_t(y)) \rightarrow 0_+$, it indicates that $\mathbb{E}[\int_0^\infty M_tc^\dag(Y_t(y), \hat{H}_t(y))dt] \rightarrow 0_+$.
The existence of $y^*$ can thus be verified if $\mathbb{E}[\int_0^\infty M_tc^\dag(Y_t(y), \hat{H}_t(y))dt]$ is continuous in $y$.

Indeed, let $c^\ddag(Y_t(y), \hat{H}_t(y)) = \max(c^\dag(Y_t(y), \lam\hat{H}_t(y)))$, then
$\mathbb{E}[\int_0^\infty M_tc^\ddag(Y_t(y), \hat{H}_t(y))dt]$ exists and is continuous in $y$, and
$$\mathbb{E}[\int_0^\infty M_tc^\dag(Y_t(y), \hat{H}_t(y))dt] = \mathbb{E}[\int_0^\infty M_tc^\ddag(Y_t(y), \hat{H}_t(y))\mathbf{1}\{Y_t(y) \leq y_1(\hat{H}_t(y))\} dt].$$
Therefore, $\mathbb{E}[\int_0^\infty M_tc^\dag(Y_t(y), \hat{H}_t(y))dt]$ is also continuous in $y$.
\end{proof}

\begin{lemma}\label{lemma: 4}
The following transversality condition holds that for all $y>0$,
$$
\lim\limits_{T\rightarrow+\infty} \mathbb{E}\bigg[ e^{-rT}v(Y_T(y), \hat{H}_T(y)) \bigg] = 0.
$$
\end{lemma}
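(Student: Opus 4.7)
The plan is to identify $e^{-rt}v(Y_t,\hat H_t)$ as a (localized) martingale plus a finite-variation integral involving $V(Y_t,\hat H_t)$, and then invoke Lemma \ref{lemma: 1} to take the limit $T\to\infty$.

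First I would record the structural facts needed for an Ito expansion. The process $Y_t=ye^{rt}M_t=ye^{-\kappa^2 t/2-\kappa W_t}$ is a continuous positive $\mathbb{P}$-martingale with $dY_t=-\kappa Y_t\,dW_t$ and $d\langle Y\rangle_t=\kappa^2 Y_t^2\,dt$, while $\hat H_t$ in \eqref{eq: H_hat} is continuous, non-decreasing, of finite variation, and by construction its Stieltjes support lies entirely on the set $\{t\geq 0:\,Y_t=y_3(\hat H_t)\}$. Proposition \ref{prop: dual_solution_beta<1} shows that $v(\cdot,h)$ is $C^1$ in $y$ across the smooth-fit curves $y=y_1(h)$ and $y=y_2(h)$ and piecewise $C^2$ in $y$ inside each region, which is enough to apply an Ito-Tanaka argument (the local time of $Y$ at any fixed level is zero a.s.).

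Next, applying Ito's formula to $F(t,y,h)=e^{-rt}v(y,h)$ and using the dual ODE \eqref{eq: LODE_beta<1} in each region gives
\begin{equation*}
d\bigl(e^{-rt}v(Y_t,\hat H_t)\bigr)=e^{-rt}\Bigl[-rv+\tfrac{\kappa^2}{2}Y_t^2 v_{yy}\Bigr]dt+e^{-rt}v_y(Y_t,\hat H_t)\,dY_t+e^{-rt}v_h(Y_t,\hat H_t)\,d\hat H_t.
\end{equation*}
The bracketed term collapses to $-e^{-rt}V(Y_t,\hat H_t)$ by \eqref{eq: LODE_beta<1}, while the last summand vanishes because $d\hat H_t$ is supported on $\{Y_t=y_3(\hat H_t)\}$ on which $v_h\equiv 0$ by the free boundary condition \eqref{eq: free_boundary_dual}. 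Integrating on $[0,T\wedge\tau_n]$ for a localizing sequence $\tau_n\uparrow\infty$ that makes the stochastic integral a genuine martingale and taking expectations yields
\begin{equation*}
\mathbb{E}\bigl[e^{-r(T\wedge\tau_n)}v(Y_{T\wedge\tau_n},\hat H_{T\wedge\tau_n})\bigr]=v(y,h)-\mathbb{E}\Bigl[\int_0^{T\wedge\tau_n}e^{-rs}V(Y_s,\hat H_s)\,ds\Bigr].
\end{equation*}

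Finally I would pass to the limit in $n$ and then in $T$. Using the explicit piecewise form of $v$ in Proposition \ref{prop: dual_solution_beta<1} together with the asymptotic estimates for $C_2,\ldots,C_6$ in Lemma \ref{remark: C_order}, $|v(Y_t,\hat H_t)|$ is controlled by a polynomial in $Y_t$ (with exponents $r_1$, $r_2$, $\gamma_1$, $1$) and a polynomial in $\hat H_t$ (with exponents $\beta_1$ and $\beta_2$). The bound $\hat H_t\leq h\vee C(\inf_{s\leq t}Y_s)^{1/(\beta_1-1)}$ combined with Doob's $L^p$ inequality applied to the positive martingale $Y^q$ (with $q=\beta_i/(\beta_1-1)\in(r_2,0)$ thanks to Assumption (A1)) gives moment estimates of the form $\mathbb{E}|v(Y_T,\hat H_T)|\leq Ce^{\alpha T}$ for some $\alpha<r$, which both provides uniform integrability (removing the localization) and shows $e^{-rT}\mathbb{E}|v(Y_T,\hat H_T)|$ is bounded. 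Using the identity $\mathbb{E}[e^{-rT}v(Y_T,\hat H_T)]=v(y,h)-\mathbb{E}[\int_0^T e^{-rs}V(Y_s,\hat H_s)\,ds]$ and Lemma \ref{lemma: 1}, dominated convergence on $\int_0^\infty e^{-rs}|V(Y_s,\hat H_s)|\,ds$ (finite by the same polynomial estimates) forces the right-hand side to $0$ as $T\to\infty$, completing the proof.

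The main obstacle is the last paragraph: verifying that the growth of $|v|$ and $|V|$, coupled with the moments of $(Y_T,\hat H_T)$, is strictly sub-exponential of rate $r$. This is exactly where Assumption (A1) becomes indispensable, since $\beta_j<-r_2/r_1$ ensures that all exponents $q$ arising in $\hat H_T^{q}$-type bounds satisfy $q(q-1)<2r/\kappa^2$, so the relevant moments of $Y^q$ grow slower than $e^{rT}$ and are annihilated by the discount.
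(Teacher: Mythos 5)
Your route is genuinely different from the paper's: the paper proves the transversality condition by brute force, substituting $(Y_T,\hat H_T)$ into the explicit piecewise formula of Proposition \ref{prop: dual_solution_beta<1} and using the asymptotics of $C_2,\ldots,C_6$ from Lemma \ref{remark: C_order} together with the region constraints to reduce every term to $e^{-rT}\E[\hat H_T^{\beta}]$ or $e^{-rT}\E[Y_T^{\gamma}]$ with $\gamma$ strictly inside $(r_2,r_1)$, which vanish by Lemmas \ref{lemma: claim1} and \ref{lemma: claim2}. Your It\^o route, however, has a circularity problem: Lemma \ref{lemma: 1} is obtained in the paper by ``following the proof of Lemma 5.1 in Deng et al.,'' and that proof consists precisely of your It\^o identity followed by an application of the transversality condition to kill the boundary term as $T\rightarrow+\infty$. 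So you cannot take Lemma \ref{lemma: 1} as an input here without first proving the statement you are trying to prove (or re-deriving Lemma \ref{lemma: 1} by a direct computation of the expectation, which neither this paper nor Deng et al.\ does).

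Your fallback in the last paragraph --- a direct estimate $\E|v(Y_T,\hat H_T)|\leq Ce^{\alpha T}$ with $\alpha<r$ --- is the right idea and is essentially the paper's proof, but as stated it fails. The function $v$ contains the terms $C_2(\hat H_T)Y_T^{r_2}$, $C_3(\hat H_T)Y_T^{r_1}$, etc., and bounding ``a polynomial in $Y_T$ with exponents $r_1,r_2$'' separately from ``a polynomial in $\hat H_T$'' is hopeless: since $r_i^2-r_i-\frac{2r}{\kappa^2}=0$ one has $\E[Y_T^{r_i}]=y^{r_i}e^{\frac{\kappa^2}{2}r_i(r_i-1)T}=y^{r_i}e^{rT}$, so $e^{-rT}\E[Y_T^{r_i}]$ is a nonzero constant, not $o(1)$. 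The essential missing step is to exploit the region constraints tying $Y_T$ to $\hat H_T$ (namely $Y_T>y_1(\hat H_T)\geq w(\hat H_T)^{\beta_1-1}$ when $c_T=0$, $y_2(\hat H_T)\leq Y_T\leq y_1(\hat H_T)$ in the intermediate region, and $Y_T\geq(1-\lam)^{\beta_1}\hat H_T^{\beta_1-1}$ throughout) together with the precise orders of $C_i(h)$ from Lemma \ref{remark: C_order}, so that each mixed term $C_i(\hat H_T)Y_T^{r_i}$ collapses to $O(Y_T^{\gamma})$ with $\gamma\in\{\gamma_1,\gamma_2,\beta_2/(\beta_1-1)\}\subset(r_2,r_1)$ plus terms $O(\hat H_T^{\beta_j})$; only then do Assumption (A1) and the running-maximum moment computation (the paper uses the exact formula from Guasoni et al.\ rather than Doob's inequality, though your Doob variant would also work for exponents strictly inside $(r_2,r_1)$) deliver a rate $\alpha<r$. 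With that step supplied, your direct estimate stands on its own and the It\^o machinery becomes superfluous.
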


\begin{proof}
Recall that $
\hat{H}_t(y) := h \vee
\bigg((1-\lam)^{-\frac{\beta_1}{\beta_1-1}} (\inf_{s\leq t}Y_s(y))^{\frac{1}{\beta_1-1}} \bigg)$.
In this proof, the results in Lemma \ref{lemma: claim1} and Lemma \ref{lemma: claim2} are applied repeatedly, therefore, we omit the illustrations if there is no ambiguity.
In more details, we use Lemma \ref{lemma: claim1} with $\beta = \beta_2 \geq \min(\beta_1,\beta_2)$, and use Lemma \ref{lemma: claim2} with $\gamma = \gamma_1, \gamma_2,$ and $\frac{\beta_2}{\beta_1-1}$ since $r_1 > 0 > \frac{\beta_2}{\beta_1-1} \geq \min(\gamma_1, \gamma_2) > r_2$, which can be obtained by some simple computations.

Let us firstly consider the case $c_T = 0$.
We first write that
\begin{equation}\label{eq: trans_R1}
e^{-rT}\E[v(Y_T(y),\hat{H}_T(y))] = e^{-rT}\E\bigg[C_2(\hat{H}_T(y)) Y_T(y)^{r_2} - \frac{k}{r\beta_2}(\lam \hat{H}_T(y))^{\beta_2} \bigg],
\end{equation}
in which the second term converges to 0 as $T\rightarrow+\infty$ due to Lemma \ref{lemma: claim1}.
For the first term in \eqref{eq: trans_R1}, since $Y_T(y) > y_1(\hat{H}_T(y)) \geq w_T(y)^{\beta_1-1} $, we have
$$\begin{aligned}
e^{-rT}\E\bigg[ C_2(\hat{H}_T(y)) (Y_T(y))^{r_2} \bigg]
&= e^{-rT}O(\E[\hat{H}_T^{\beta_2}(y)w_T^{-r_2(\beta_1-1)}Y_T(y)^{r_2}]) + e^{-rT}O(\E[w_T(y)^{r_1\beta_1+r_2}Y_T(y)^{r_2}]) \\
&~~+e^{-rT}O(\E[w_T^{(\gamma_2-r_2)(\beta_1-1)}Y_T(y)^{r_2}]) \\
&= e^{-rT}O(\E[\hat{H}_T^{\beta_2}(y)]) + e^{-rT}O(\E[Y_T(y)^{\gamma_1}])+ e^{-rT}O(\E[Y_T(y)^{\gamma_2}]),
\end{aligned}$$
which vanishes as $T\rightarrow+\infty$ due to Lemma \ref{lemma: claim1} and Lemma \ref{lemma: claim2}.

We then consider the case $0<c_T<\hat{H}_T(y)$.
In this case, $y_2(\hat{H}_T(y)) \leq Y_T(y) \leq y_1(\hat{H}_T(y))$, and thus
\begin{equation}\label{eq: trans_R2}\begin{aligned}
e^{-rT}v(Y_T(y),\hat{H}_T(y)) = e^{-rT}&\bigg[
C_3(\hat{H}_T(y)) Y_T(y)^{r_1} + C_4(\hat{H}_T(y)) Y_T(y)^{r_2}\\
&+ \frac{2}{\kappa^2\gamma_1(\gamma_1-r_1)(\gamma_1-r_2)}Y_T(y)^{\gamma_1} - \frac{\lam \hat{H}_T(y)}{r}Y_T(y) \bigg].
\end{aligned}\end{equation}
We consider asymptotic behavior of the above equation term by term as $T\rightarrow+\infty$.

The third term in \eqref{eq: trans_R2} clearly converges to 0 by Lemma \ref{lemma: claim1}. For the fourth term in \eqref{eq: trans_R2}, since $Y_T(y) \leq y_1(\hat{H}_T(y)) = O(\hat{H}_T(y)^{\beta_2-1})+O(\hat{H}_T(y)^{\beta_1-1})$, we have
$$\begin{aligned}
\E[e^{-rT}Y_T(y)\hat{H}_T(y)]
&= e^{-rT}O(\E[\hat{H}_T(y)^{\beta_2}]) + e^{-rT}O(\E[\hat{H}_T(y)^{\beta_1}]),
\end{aligned}$$
which also vanishes as $T\rightarrow+\infty$ by Lemma \ref{lemma: claim1}.

Let us continue to consider the terms containing $C_3(\hat{H}_T(y))$ and $C_4(\hat{H}_T(y))$ in equation \eqref{eq: trans_R2}.
Because of the constraint
$w_T(y) = O(Y_t(y)^{\frac1{\beta_1-1}})$ due to $Y_t(y) \leq y_1(\hat{H}_T(y)) \leq w_T(y)^{\beta_1-1}$ which is discussed in the proof of Remark \eqref{remark: C_order},  we deduce that
$$\begin{aligned}
&e^{-rT}\E\bigg[ C_3(\hat{H}_T(y)) (Y_T(y))^{r_1} \bigg]\\
=& e^{-rT}O(\E[w_T(y)^{r_1+r_2\beta_1+(\beta_2-\beta_1)}Y_T(y)^{r_1}]) + e^{-rT}O(\E[w_T(y)^{r_1+r_2\beta_1}(Y_T(y))^{r_1}]) \\
=& e^{-rT}O(\E[Y_T(y)^{\frac{\beta_2}{\beta_1-1}}]) + e^{-rT}O(\E[Y_T(y)^{\gamma_1}]),
\end{aligned}$$
which converges to 0 by Lemma \ref{lemma: claim2}. \\

In addition, from $Y_T(y) \geq (1-\lam)^{\beta_1} \hat{H}_T(y)^{\beta_1-1}$, it follows that $\hat{H}_T(y)^{-1} = O(Y_T(y)^{\frac1{1-\beta_1}})$, and thus
$$\begin{aligned}
&e^{-rT}\E\bigg[ C_4(\hat{H}_T(y)) (Y_T(y))^{r_2} \bigg]\\
=& e^{-rT}O(\E[\hat{H}_T(y)^{r_1\beta_1+r_2+(\beta_2-\beta_1)}Y_T(y)^{r_2}]) + e^{-rT}O(\E[\hat{H}_T(y)^{r_1\beta_1+r_2}(Y_T(y))^{r_2}]) \\
=& e^{-rT}O(\E[Y_T(y)^{\frac{\beta_2}{\beta_1-1}}]) + e^{-rT}O(\E[Y_T(y)^{\gamma_1}]),
\end{aligned}$$
which vanishes as $T\rightarrow+\infty$ by Lemma \ref{lemma: claim2}.

Finally, we consider the case $c_T = \hat{H}_T(y)$ and write that
\begin{equation}\label{eq: trans_R3}\begin{aligned}
e^{-rT}v(Y_T(y), \hat{H}_T(y)) &= e^{-rT}\bigg( C_5(\hat{H}_T(y))Y_T(y)^{r_1} + C_6(\hat{H}_T(y))Y_T(y)^{r_2} \\
&~~+ \frac{1}{r\beta_1}((1-\lam)\hat{H}_T(y))^{\beta_1} - \frac{1}{r}\hat{H}_T(y)Y_T(y)\bigg).
\end{aligned}\end{equation}
In this case, similar to the discussion for \eqref{eq: trans_R2}, we have $w_T(y) = O(Y_T(y)^{\frac1{\beta_1-1}}).$

The last two terms of right-hand side in equation \eqref{eq: trans_R3}, similar to the last two terms of right-hand side in equation \eqref{eq: trans_R2}, also converge to 0 as $T\rightarrow+\infty$.

For the first term in \eqref{eq: trans_R3}, by Remark \ref{remark: C_order}, we have
$$\begin{aligned}
e^{-rT}C_5(\hat{H}_T(y))Y_T(y)^{r_1}
&= e^{-rT}\bigg( O(w_T(y)^{r_2\beta_1+r_1+(\beta_2-\beta_1)}) + O(w_T(y)^{r_2\beta_1+r_1})  \bigg)Y_T(y)^{r_1} \\
&= e^{-rT}O\bigg( Y_T(y)^{\frac{\beta_2}{\beta_1-1}} \bigg) + e^{-rT}O\bigg( Y_T(y)^{\gamma_1}\bigg),
\end{aligned}$$
which converges to 0 as $T\rightarrow+\infty$ by Lemma \ref{lemma: claim2}.

For the second term in \eqref{eq: trans_R3}, by Remark \ref{remark: C_order}, we have
$$\begin{aligned}
e^{-rT}C_6(\hat{H}_T(y))Y_T(y)^{r_2}
&= e^{-rT}\bigg( O(\hat{H}_T(y)^{r_1\beta_1+r_2 + (\beta_2-\beta_1)}) +O(\hat{H}_T(y)^{r_1\beta_1+r_2}))  \bigg)Y_T(y)^{r_1} \\
&= e^{-rT}O\bigg( Y_T(y)^{\frac{\beta_2}{\beta_1-1}} \bigg) + e^{-rT}O\bigg( Y_T(y)^{\gamma_1}\bigg),
\end{aligned}$$
which also vanishes as $T\rightarrow+\infty$ by Lemma \ref{lemma: claim2}.
Therefore, we get the desired result.
\end{proof}

\begin{lemma}\label{lemma: 4_2}
For any $T>0$, we have
\begin{align}\label{desclm}
\lim\limits_{n\rightarrow+\infty} \E\big[e^{-r\tau_n}v(Y_{\tau_n}(y), \hat{H}_{\tau_n}(y)) \mathbf{1}_{\{T> \tau_n\}} \big] = 0.
\end{align}
\end{lemma}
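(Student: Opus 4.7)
The plan is to reduce the statement to a dominated-convergence argument. The stopping times $\tau_n$ here are the standard localisation sequence used in the verification theorem (an exit time from a compact subset of the state space), so $\tau_n \uparrow \infty$ almost surely; consequently $\mathbf{1}_{\{T>\tau_n\}}\to 0$ a.s. as $n\to\infty$. On the event $\{T>\tau_n\}$ we also have $e^{-r\tau_n}\leq 1$ and the pair $(Y_{\tau_n}(y),\hat H_{\tau_n}(y))$ lies in the trajectory up to time $T$. Hence
\[
\bigl|e^{-r\tau_n} v(Y_{\tau_n}(y),\hat H_{\tau_n}(y))\bigr|\,\mathbf{1}_{\{T>\tau_n\}}
\;\leq\; \sup_{0\leq t\leq T}\bigl|v(Y_t(y),\hat H_t(y))\bigr|,
\]
and it will suffice to show that the right-hand side is in $L^1(\P)$, after which $\E[\sup_{t\le T}|v(Y_t,\hat H_t)|\,\mathbf{1}_{\{T>\tau_n\}}]\to 0$ by dominated convergence.

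To establish the $L^1$ bound, I would substitute the piecewise formula for $v$ from Proposition \ref{prop: dual_solution_beta<1} and use the coefficient asymptotics of Lemma \ref{remark: C_order}. The supremum over $[0,T]$ of $|v(Y_t,\hat H_t)|$ is bounded, up to a deterministic constant depending on $T$, by a finite sum of terms of the form $(\sup_{t\leq T}Y_t)^{\alpha_+}+(\inf_{t\leq T}Y_t)^{\alpha_-}$ (for $\alpha\in\{r_1,r_2,\gamma_1\}$) times powers of $\sup_{t\leq T}\hat H_t$. Because $Y_t(y)=y\,e^{-\kappa^2 t/2-\kappa W_t}$ is a geometric Brownian motion, it has finite moments of every real order on the bounded interval $[0,T]$; in particular $\E[(\inf_{t\leq T}Y_t)^{-p}]<\infty$ and $\E[(\sup_{t\leq T}Y_t)^{p}]<\infty$ for every $p>0$. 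Since $\hat H_t(y)=h\vee\bigl((1-\lam)^{-\beta_1/(\beta_1-1)}(\inf_{s\leq t}Y_s)^{1/(\beta_1-1)}\bigr)$ and $1/(\beta_1-1)<0$, the running maximum $\sup_{t\leq T}\hat H_t$ is controlled by a negative power of $\inf_{t\leq T}Y_t$, which again has all moments.

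Combining these two ingredients via the Cauchy--Schwarz (or Hölder) inequality, each summand obtained from the formula for $v$ has finite expectation. Summing the finitely many pieces produces the desired dominating integrable random variable, and dominated convergence then yields \eqref{desclm}. The main obstacle is purely bookkeeping: one must check that every exponent appearing in the piecewise expression for $v$ (including the $\gamma_1$-term and the implicit $w(h)$-dependence of the coefficients $C_i$) leads to a power of $Y$ or $\hat H$ whose relevant moment on $[0,T]$ is finite; once Lemma \ref{remark: C_order} is in hand this is a routine exponent count analogous to the estimates carried out in the proof of Lemma \ref{lemma: 4}, but now considerably easier because the time horizon $T$ is finite so no discounting-vs-growth trade-off has to be tracked.
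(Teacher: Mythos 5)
Your argument is correct, but it is organized differently from the paper's. The paper exploits the defining property of $\tau_n$ (that $Y_t(y)\in[\tfrac1n,n]$ for $t\le\tau_n$, hence $\hat H_{\tau_n}(y)=O(1)+O(n^{1-\beta_1})$) to bound $|v(Y_{\tau_n},\hat H_{\tau_n})|$ by a fixed polynomial $O(n^{-r_2})$ in $n$, case by case over the three regions, and then multiplies this by the super-polynomial tail estimate $\E[\mathbf 1_{\{\tau_n\le T\}}]\le n^{-2\xi}(1+y^{2\xi})e^{CT}$ (borrowed from Guasoni et al.), choosing $\xi$ large enough that the product vanishes. You instead avoid any quantitative rate in $n$: you note $\tau_n\uparrow\infty$ a.s.\ (so the indicator tends to $0$), dominate the integrand uniformly in $n$ by $\sup_{t\le T}|v(Y_t(y),\hat H_t(y))|$, and verify via Lemma \ref{remark: C_order}, the all-order moment bounds for $\sup_{t\le T}Y_t$ and $(\inf_{t\le T}Y_t)^{-1}$, and H\"older that this dominating variable is integrable; dominated convergence then finishes. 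The two routes rest on the same underlying moment facts for geometric Brownian motion on $[0,T]$ (the paper's tail bound for $\P(\tau_n\le T)$ is itself a Chebyshev-type consequence of them), but yours dispenses with the external citation and with tracking the polynomial order of $v$ in $n$, at the cost of the exponent bookkeeping you flag for the dominating variable, which is indeed routine since $T$ is finite. Both are sound; no gap.
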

\begin{proof}
By the definition of $\tau_n$, for all $t\leq\tau_n$, we have $Y_t(y) \in \big[\frac1n, n\big]$, and thus
$$\hat{H}_t(y) \leq \max(h, (1-\lam)^{-\frac{\beta_1}{\beta_1-1}}n^{1-\beta_1}) = O(1) + O(n^{1-\beta_1}). $$
Therefore, we have that $Y_t(y)^{r_1} \leq n^{r_1}$, $Y_t(y)^{r_2} \leq \big(\frac1n \big)^{r_2}= n^{-r_2}$.
Together with the fact that $r_1>0>\max\{\gamma_1,\gamma_2\}\geq \min\{\gamma_1,\gamma_2\} >r_2$ by Assumption (A1), we show the order of $v(Y_{\tau_n}(y), \hat{H}_{\tau_n}(y))$ in cases when $c_{\tau_n}^*=0$, $0<c_{\tau_n}^*<\hat{H}_{\tau_n}(y)$, and $c_{\tau_n}^* = \hat{H}_{\tau_n}(y)$.

Similar to the proof of Lemma \ref{lemma: 4}, if $c_{\tau_n}^*=0$, we have that
$$\begin{aligned}
v(Y_{\tau_n}(y), \hat{H}_{\tau_n}(y))
&= C_2(\hat{H}_{\tau_n}(y))Y_{\tau_n}(y)^{r_2} - \frac{k}{r\beta_2}(\lam \hat{H}_{\tau_n}(y))^{\beta_2} \\
&= O(1) + O(n^{\beta_2(1-\beta_1)})+ O(n^{-\gamma_1}) + O(n^{-\gamma_2}) \\
&= O(n^{-r_2}).
\end{aligned}$$
If $0<c_{\tau_n}^*<\hat{H}_{\tau_n}(y)$, we have that
$$\begin{aligned}
v(Y_{\tau_n}(y),\hat{H}_{\tau_n}(y)) = &
C_3(\hat{H}_{\tau_n}(y)) Y_{\tau_n}(y)^{r_1} + C_4(\hat{H}_{\tau_n}(y)) Y_{\tau_n}(y)^{r_2}\\
&~~+ \frac{2}{\kappa^2\gamma_1(\gamma_1-r_1)(\gamma_1-r_2)}Y_{\tau_n}(y)^{\gamma_1} - \frac{\lam \hat{H}_{\tau_n}(y)}{r}Y_{\tau_n}(y) \\
&= O(1) + O(n^{\beta_2(1-\beta_1)}) + O(n^{\beta_1(1-\beta_1)}) + O(n^{\frac{\beta_2}{1-\beta_1}}) + O(n^{-\gamma_1}) \\
&= O(n^{-r_2}).
\end{aligned}$$
If $c_{\tau_n} = \hat{H}_{\tau_n}(y)$, we have that
$$\begin{aligned}
v(Y_{\tau_n}(y), \hat{H}_{\tau_n}(y))
&=  C_5(\hat{H}_{\tau_n}(y))Y_{\tau_n}(y)^{r_1} + C_6(\hat{H}_{\tau_n}(y))Y_{\tau_n}(y)^{r_2} \\
&~~+ \frac{1}{r\beta_1}((1-\lam)\hat{H}_{\tau_n}(y))^{\beta_1} - \frac{1}{r}\hat{H}_{\tau_n}(y)Y_{\tau_n}(y)\\
&= O(n^{-r_2}).
\end{aligned}$$
In conclusion, in all the cases, $v(Y_{\tau_n}(y), \hat{H}_{\tau_n}(y)) = O(n^{-r_2})$.
In addition, similar to the proof of (A.25) in \cite{GuasoniHubermanR2020MFF}, there exists some constant $C$ such that
$\E[\mathbf{1}_{\{\tau\leq T\}}] \leq n^{-2\xi}(1+y^{2\xi})e^{CT}$, for any $\xi\geq 1$.
Putting all the pieces together, we get the desired claim \eqref{desclm}.
\end{proof}

\begin{lemma}\label{lemma: claim1}
For $\beta \in \{\beta_1,\beta_2\}$, we have
\begin{equation}
\lim\limits_{T\rightarrow+\infty}\E\bigg[e^{-rT}\hat{H}_T(y)^{\beta}\bigg] = 0.
\end{equation}
\end{lemma}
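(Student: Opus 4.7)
My plan is to bound $\hat{H}_T(y)$ explicitly in terms of the running infimum of $Y_t(y)$, then reduce the question to a bound on the running supremum of an exponential Brownian functional and finally invoke Doob's maximal inequality with an exponent chosen to beat the discount rate $r$.

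First I would write $m_T := \inf_{s\le T} Y_s(y)$. The definition of $\hat H_T(y)$ together with subadditivity of $x\mapsto x^\beta$ for $\beta\in(0,1)$ (which covers both cases $\beta=\beta_1,\beta_2$) yields
\begin{equation*}
\hat H_T(y)^{\beta}\;\le\; h^{\beta}+(1-\lambda)^{-\beta\beta_1/(\beta_1-1)}\,m_T^{\beta/(\beta_1-1)}.
\end{equation*}
Since $1/(\beta_1-1)<0$, setting $\alpha:=\beta/(1-\beta_1)>0$ one has $m_T^{\beta/(\beta_1-1)}=\sup_{s\le T}Y_s(y)^{-\alpha}$. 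Writing $Y_s(y)=y\exp(-\kappa^{2}s/2-\kappa W_s)$, I would factor
\begin{equation*}
Y_s(y)^{-\alpha}=y^{-\alpha}\,e^{(\alpha+\alpha^{2})\kappa^{2}s/2}\,N_s,\qquad N_s:=e^{\alpha\kappa W_s-\alpha^{2}\kappa^{2}s/2},
\end{equation*}
so that $N$ is a positive $\mathbb F$-martingale, and conclude
\begin{equation*}
\sup_{s\le T}Y_s(y)^{-\alpha}\;\le\; y^{-\alpha}\,e^{(\alpha+\alpha^{2})\kappa^{2}T/2}\,\sup_{s\le T}N_s.
\end{equation*}

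For any $p>1$, Doob's $L^{p}$ maximal inequality together with Jensen's inequality gives
\begin{equation*}
\mathbb{E}\bigl[\sup_{s\le T}N_s\bigr]\;\le\;\Bigl(\mathbb{E}\bigl[\sup_{s\le T}N_s^{p}\bigr]\Bigr)^{1/p}\;\le\;\tfrac{p}{p-1}\,\mathbb{E}[N_T^{p}]^{1/p}\;=\;\tfrac{p}{p-1}\,e^{(p-1)\alpha^{2}\kappa^{2}T/2}.
\end{equation*}
Combining the previous displays yields
\begin{equation*}
\mathbb{E}\bigl[e^{-rT}\hat H_T(y)^{\beta}\bigr]\;\le\; h^{\beta}e^{-rT}+\frac{Cp}{p-1}\,\exp\!\Bigl\{\Bigl[\bigl(\alpha+\alpha^{2}\bigr)\tfrac{\kappa^{2}}{2}+(p-1)\tfrac{\alpha^{2}\kappa^{2}}{2}-r\Bigr]T\Bigr\},
\end{equation*}
where $C=C(\beta,h,y,\lambda,\beta_1)$ is independent of $T$.

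The key step (and the main obstacle) is to verify that the bracketed exponent can be made negative by choosing $p$ close to $1$. As $p\downarrow 1$, this reduces to checking $\alpha^{2}+\alpha-2r/\kappa^{2}<0$, i.e.\ $\alpha<r_1-1=-r_2$, where $r_1,r_2$ are the roots from Assumption (A1) satisfying $r_1+r_2=1$ and $r_1r_2=-2r/\kappa^{2}$. Since $\alpha=\beta/(1-\beta_1)$, the required inequality is $\beta<(1-\beta_1)(-r_2)$. Assumption (A1) provides $\beta_j<-r_2/r_1$ for $j=1,2$, and in particular $\beta_1\le -r_2/r_1=1-1/r_1$, which is equivalent to $1/r_1\le 1-\beta_1$; thus $\beta<-r_2/r_1\le(-r_2)(1-\beta_1)$, as needed. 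Choosing $p>1$ sufficiently close to $1$ then makes the exponent strictly negative and the right-hand side vanishes as $T\to\infty$, completing the proof.
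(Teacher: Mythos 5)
Your proposal is correct, and the reduction in the first half is the same as the paper's: both bound $\hat H_T(y)^{\beta}$ by $h^{\beta}$ plus a constant times $\bigl(\inf_{s\le T}Y_s(y)\bigr)^{\beta/(\beta_1-1)}=\sup_{s\le T}Y_s(y)^{-\alpha}$ with $\alpha=\beta/(1-\beta_1)>0$. Where you diverge is the estimate of $\E\bigl[\sup_{s\le T}Y_s(y)^{-\alpha}\bigr]$. The paper writes $Y_s(y)^{-\alpha}$ as an increasing function of the drifted Brownian motion $W_s^{(\kappa/2)}$, so that the supremum passes inside, and then invokes the closed-form expression for $\E\bigl[\exp\{b\,(W_T^{(\zeta)})^*\}\bigr]$ from Corollary A.7 of Guasoni et al., which yields the \emph{exact} exponential growth rate $\tfrac{\kappa^2}{2}\gamma_0(\gamma_0-1)=\tfrac{\kappa^2}{2}(\alpha^2+\alpha)$ with $\gamma_0=-\alpha$. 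You instead peel off the deterministic factor $e^{(\alpha+\alpha^2)\kappa^2 s/2}$, bound it by its value at $T$, and control the remaining exponential martingale $N$ by Doob's $L^p$ maximal inequality with $p\downarrow 1$; this gives an upper rate $(\alpha+\alpha^2)\tfrac{\kappa^2}{2}+(p-1)\tfrac{\alpha^2\kappa^2}{2}$ that approaches the exact one, which suffices because the target inequality $(\alpha+\alpha^2)\tfrac{\kappa^2}{2}<r$, i.e.\ $\alpha<-r_2$ (equivalently $\gamma_0>r_2$), is an open condition and is exactly what \textbf{Assumption (A1)} delivers — your verification that $\beta<-r_2/r_1<(-r_2)(1-\beta_1)$ matches the paper's use of $r_1>0>\gamma_0\ge\min(\gamma_1,\gamma_2)>r_2$. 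The trade-off: the paper's route is sharper and reuses machinery needed elsewhere (Lemma \ref{lemma: claim2} and the transversality arguments), while yours is self-contained and avoids importing the reflection-principle formula, at the cost of an $\varepsilon$ of rate that you correctly absorb by taking $p$ close to $1$.
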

\begin{proof}
It is obvious that
$$\begin{aligned}
e^{-rT}\E\bigg[\hat{H}_T(y)^{\beta}\bigg]
\leq e^{-rT}\E\bigg[ \sup\limits_{s\leq T}Y_s(y)^{\frac{\beta}{\beta_1-1}}(1-\lam)^{-\frac{\beta_1\beta}{\beta_1-1}} \bigg] + e^{-rT}\E[h^{\beta} ], \\
    \end{aligned}$$
and it is clear that $e^{-rT}\E[h^{\beta} ] = O(e^{-rT})$ as $T\rightarrow+\infty$.

Define $W_t^{(\frac12\kappa)} := W_t + \frac12\kappa t$ with its running maximum $\bigg(W_t^{(\frac12\kappa)}\bigg)^*$.
It follows that
    $$\begin{aligned}
&e^{-rT}\E\bigg[ \sup\limits_{s\leq T}Y_s(y)^{\frac{\beta}{\beta_1-1}}(1-\lam)^{-\frac{\beta_1\beta}{\beta_1-1}} \bigg] \\
=& e^{-rT}O\bigg( \E\bigg[ \exp\bigg\{ aW_T^{(\zeta)} + b\bigg( W_T^{(\zeta)}\bigg)^* \mathbb{I}{\bigg\{\bigg( W_T^{(\zeta)}\bigg)^* \geq k\bigg\}} \bigg\} 
\bigg] \bigg),
\end{aligned}$$
where $a=0$, $b = -\frac{\beta}{\beta_1-1}\kappa > 0$, $\zeta = \frac12\kappa > 0$, and $k = 0$.
Note that $2a+b+2\zeta > 2a+b+\zeta > 0$, thanks to the Corollary A.7 in Guasoni et al. \cite{GuasoniHubermanR2020MFF}, we have that
$$\begin{aligned}
&\E\bigg[ \exp\bigg\{ aW_T^{(\zeta)} + b\bigg( W_T^{(\zeta)}\bigg)^*  \bigg\} 
\mathbb{I}{\bigg\{\bigg( W_T^{(\zeta)}\bigg)^* \geq k\bigg\}}
\bigg]
\\
=& \frac{2(a+b+\zeta)}{2a+b+2\zeta}\exp\bigg\{ \frac{(a+b)(a+b+2\zeta)}{2}T \bigg\} \Phi\bigg((a+b+\zeta)\sqrt T - \frac{k}{\sqrt T} \bigg) \\
+& \frac{2(a+\zeta)}{2a+b+2\zeta}\exp\bigg\{ (2a+b+2\zeta)k + \frac{a(a+2\zeta)}{2}T \bigg\} \Phi\bigg( -(a+\zeta)\sqrt T - \frac{k}{\sqrt T} \bigg),
\end{aligned}$$
and therefore
$$\begin{aligned}
&\lim\limits_{T\rightarrow+\infty}\frac1T\log \E\bigg[ \exp\bigg\{ aW_T^{(\zeta)} + b\bigg( W_T^{(\zeta)}\bigg)^*  \bigg\} 
\mathbb{I}{\bigg\{\bigg( W_T^{(\zeta)}\bigg)^* \geq k\bigg\}}
\bigg] -r \\
=& \frac{(a+b)(a+b+2\xi)}{2} -r = \frac{\kappa^2}{2}\gamma_0(\gamma_0-1) -r
  = \frac{\kappa^2}{2}(\gamma_0-r_1)(\gamma_0-r_2),
\end{aligned}$$
where $\gamma_0 = \frac{\beta}{\beta_1-1}$.
It thus holds that
$$
\begin{aligned}
&e^{-rT} \E\bigg[ \exp\bigg\{ aW_T^{(\zeta)} + b\bigg( W_T^{(\zeta)}\bigg)^* \mathbb{I}{\bigg\{\bigg( W_T^{(\zeta)}\bigg)^* \geq k\bigg\}} \bigg\} \\
=& \exp\bigg\{\bigg(\frac1T \log \E\bigg[ \exp\bigg\{ aW_T^{(\zeta)} + b\bigg( W_T^{(\zeta)}\bigg)^*  \bigg\} 
\mathbb{I}{\bigg\{\bigg( W_T^{(\zeta)}\bigg)^* \geq k\bigg\}}
\bigg] -r\bigg)T \bigg\} \\
=& O\bigg( \exp\bigg\{ \frac{\kappa^2}{2}(\gamma_0-r_1)(\gamma_0-r_2) T \bigg\} \bigg),
\end{aligned}
$$
as $T\rightarrow+\infty$.
Thanks to Assumption (A1), we have $r_1>0>\gamma_0 \geq \min(\gamma_1, \gamma_2) > r_2$.
It follows that $(\gamma_0-r_1)(\gamma_0-r_2) < 0$ and thus
$$\begin{aligned}
\E\bigg[e^{-rT}\hat{H}_T(y)^{\beta}\bigg]
&= O\bigg( \exp\bigg\{ \frac{\kappa^2}{2}(\gamma_0-r_1)(\gamma_0-r_2) T \bigg\} \bigg) + O(e^{-rT}), \\
\end{aligned}$$
which tends to 0 as $T\rightarrow+\infty$.
\end{proof}

\begin{lemma}\label{lemma: claim2}
For any $r_2<\gamma<r_1$, we have
\begin{equation}
\lim\limits_{T\rightarrow+\infty}\E\bigg[e^{-rT}Y_T(y)^{\gamma} \bigg] = 0.
\end{equation}
\end{lemma}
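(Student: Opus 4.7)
\medskip

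The plan is to reduce the claim to a direct log-normal moment computation. First I would substitute the definition $Y_T(y) = y e^{rT} M_T$ with $M_T = e^{-(r+\kappa^2/2)T - \kappa W_T}$ into $Y_T(y)^\gamma$. Cancelling the $e^{rT}$ factor yields the simple expression
\begin{equation*}
Y_T(y)^\gamma = y^\gamma \exp\!\left\{-\tfrac{\gamma\kappa^2}{2}T - \gamma\kappa W_T\right\},
\end{equation*}
so that, using the moment generating function of the Gaussian variable $W_T \sim \mathcal{N}(0,T)$,
\begin{equation*}
\E\!\left[e^{-rT} Y_T(y)^\gamma\right] = y^\gamma \exp\!\left\{\left(\tfrac{\kappa^2}{2}\gamma(\gamma-1) - r\right)T\right\}.
\end{equation*}

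Next I would invoke the defining quadratic $\eta^2 - \eta - 2r/\kappa^2 = 0$ of Assumption (A1), whose roots are $r_1$ and $r_2$. Multiplying by $\kappa^2/2$ gives the factorization
\begin{equation*}
\tfrac{\kappa^2}{2}\gamma(\gamma-1) - r = \tfrac{\kappa^2}{2}(\gamma-r_1)(\gamma-r_2).
\end{equation*}
Under the standing hypothesis $r_2 < \gamma < r_1$, the two factors have opposite signs, so the exponent is strictly negative; hence the expectation decays exponentially to $0$ as $T\to+\infty$.

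There is no real obstacle here: all the work is packaged into recognising $Y_T(y)$ as a geometric Brownian motion and using $r_1, r_2$ as the roots of the Cauchy--Euler characteristic polynomial attached to the dual linear ODE \eqref{eq: LODE_beta<1}. This is precisely the mechanism that forces the admissible range $r_2 < \gamma < r_1$ to be the correct one for transversality, and it is exactly what is used implicitly in Lemma \ref{lemma: 4} whenever a term of the form $e^{-rT}\E[Y_T(y)^{\gamma_j}]$ or $e^{-rT}\E[Y_T(y)^{\beta_2/(\beta_1-1)}]$ must be shown to vanish.
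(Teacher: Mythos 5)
Your proposal is correct and follows exactly the paper's own argument: write $Y_T(y)^\gamma$ as a log-normal variable, compute the Gaussian moment to get the exponent $\frac{\kappa^2}{2}\gamma(\gamma-1)-r = \frac{\kappa^2}{2}(\gamma-r_1)(\gamma-r_2)$, and observe this is negative for $r_2<\gamma<r_1$. No differences worth noting.
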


\begin{proof}
In fact, we have that
$$\begin{aligned}
\E\bigg[e^{-rT}Y_T(y)^{\gamma} \bigg]
&= e^{-rT} \E\bigg[ (ye^{rT}\cdot e^{-(r+\frac{\kappa^2}{2})T - \kappa W_T})^{\gamma} \bigg] \\
&= y^\gamma e^{-rT} \E\big[ e^{\gamma(-\frac{\kappa^2}{2}T - \kappa W_T)}\big] = O\bigg( e^{(\gamma-r_1)(\gamma-r_2)\frac{\kappa^2}{2}T} \bigg),
\end{aligned}$$
which converges to 0 in view that $r_2<\gamma<r_1$ by Assumption (A1).
\end{proof}

\subsection{Proof of Corollary \ref{cor: beta<1}}\label{sec:proofC3.1}
To conclude the main results in Corollary \ref{cor: beta<1}, it is sufficient to prove that the SDE \eqref{eq: SDE} has a unique strong solution $(X_t^*, H_t^*)$ for any initial value $(x,h) \in \mathcal{C}$. To this end, we can essentially follow the arguments in the proof of Proposition 5.9 in \cite{DengLiPY2020arXiv}. However, due to more complicated expressions of $C_2(h)$-$C_6(h)$ in \eqref{paraC} and different feedback functions, we need to prove the following auxiliary lemmas to conclude Corollary \ref{cor: beta<1}.

\begin{lemma}\label{lemma: 5}
The function $f$ is $C^1$ within each of the subsets of $\mathbb{R}_+^2: x\leq x_1(h),$ $x_1(h)<x<x_2(h)$ and $x_2(h)\leq x\leq x_3(h)$, and it is continuous at the boundary of $x = x_2(h)$ and $x=x_3(h)$. Moreover, we have that
{\small
\begin{equation}\label{eq: f_x}
\begin{aligned}
f_x(x,h) &= \frac{1}{g(y,h)} \\
&=\begin{cases}
\bigg(-C_2(h)r_2(r_2-1)(f(x,h))^{r_2-2} \bigg)^{-1}, & \mbox{if}~ x\leq x_\mathrm{zero}(h), \\
\begin{aligned}
\bigg(&-C_3(h)r_1(r_1-1)(f(x,h))^{r_1-2} \\
&- C_4(h)r_2(r_2-1)(f(x,h))^{r_2-2} \\
&- \frac{2(\gamma_1-1)}{\kappa^2(\gamma_1-r_1)(\gamma_1-r_2)}(f(x,h))^{\gamma_1-2} \bigg)^{-1},
\end{aligned}
& \mbox{if } x_\mathrm{zero}(h) < x < x_\mathrm{aggr}(h),  \\
\begin{aligned}
\bigg(&-C_5(h)r_1(r_1-1)(f(x,h))^{r_1-2} \\
&-C_6(h)r_2(r_2-1)(f(x,h))^{r_2-2} \bigg)^{-1},
\end{aligned}
& \mbox{if } x_\mathrm{aggr}(h) \leq x \leq x_\mathrm{lavs}(h),
\end{cases}
\end{aligned}
\end{equation}
}
and
\begin{equation}\label{eq: f_h}
f_h(x,h) = -g_h(f(x,h), h)\cdot f_x(x,h).
\end{equation}
\end{lemma}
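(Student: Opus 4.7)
The plan is to derive Lemma \ref{lemma: 5} by an implicit-function-theorem argument applied to the identity $x = g(f(x,h), h)$, where $g(y,h) := -v_y(y,h)$, and then to upgrade to continuity across the two interface curves by invoking the smooth-fit conditions built into the construction of $v$ in Proposition \ref{prop: dual_solution_beta<1}.

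First, within each of the three regions, $v(y,h)$ is an explicit combination of power functions of $y$ with $h$-dependent coefficients $C_i(h)$, so $g(y,h)$ and $g_y(y,h) = -v_{yy}(y,h)$ are jointly $C^1$ in $(y,h)$ on the interior of each region. Lemma \ref{lemma: beta<1} asserts that $v(\cdot,h)$ is strictly convex in each region, hence $v_{yy} > 0$ and $g_y \ne 0$, so the implicit function theorem delivers a $C^1$ local inverse $f(\cdot,h)$ of $g(\cdot,h)$. Differentiating $x = g(f(x,h), h)$ in $x$ gives $f_x = 1/g_y(f,h) = -1/v_{yy}(f,h)$, and differentiating in $h$ gives $f_h = -g_h(f,h)/g_y(f,h) = -g_h(f,h)\, f_x$, which is \eqref{eq: f_h}. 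The three piecewise formulas in \eqref{eq: f_x} then follow by direct computation of $v_{yy}$ from \eqref{eq: dual_solu_beta<1}; for instance, in the first region one gets $v_{yy}(y,h) = C_2(h) r_2(r_2-1) y^{r_2-2}$, and the other two regions proceed identically, with the $\gamma_1$-power term contributing in the middle region and only the $C_5, C_6$ pieces contributing in the third. Note that the complicated semi-analytical nature of the $C_i(h)$ plays no role here, since these coefficients are constant in $y$ when one differentiates in $y$.

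Second, for continuity across $x = x_\mathrm{aggr}(h)$ and $x = x_\mathrm{lavs}(h)$, I would use that the smooth-fit system \eqref{eq: dual_smooth_beta<1} ensures $v$ and $v_y$ are continuous across $y = y_1(h)$ and $y = y_2(h)$. Thus $g(y,h)$ is continuous in $y$ across these interfaces, and since it is also strictly monotone in $y$ by convexity of $v$, its inverse $f(\cdot, h)$ is continuous across the corresponding $x$-values, which are precisely $x_\mathrm{aggr}(h)$ and $x_\mathrm{lavs}(h)$ by the definitions in \eqref{eq: boundary_beta<1}. Continuity in $h$ along these boundary curves is inherited from the regularity of the $y_i(h)$ and $C_i(h)$ in $h$, which in turn rests on the implicit function $z(h)$ defined by \eqref{eq: beta<1,tagent z} and the formulas in \eqref{paraC}.

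The main (and only) technical point requiring care is the verification that $v_{yy}$ does not vanish in any of the three regions, so that the denominators in \eqref{eq: f_x} are well-defined and the implicit function theorem applies. This is exactly the content of Lemma \ref{lemma: beta<1} and is ultimately where \textbf{Assumption (A1)} enters, via the sign constraints it imposes on $r_1, r_2, \gamma_1$ and on the combinations appearing in the piecewise $v_{yy}$ expressions. Once positivity of $v_{yy}$ is in hand, the remainder is routine differentiation and no subtle estimate is required, only bookkeeping in writing out the three piecewise formulas and checking that the smooth-fit data glue them continuously across the interfaces.
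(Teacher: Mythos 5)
Your proposal is correct and is exactly the standard argument this lemma calls for: invert $x=g(f(x,h),h)$ with $g=-v_y$ via the implicit function theorem, using $v_{yy}>0$ from Lemma \ref{lemma: beta<1} for nondegeneracy, read off $f_x=1/g_y(f,h)$ and $f_h=-g_h(f,h)f_x$ by differentiating the identity, and glue across the interfaces using the smooth-fit conditions \eqref{eq: dual_smooth_beta<1}. The paper itself omits the proof entirely (deferring to Lemma 5.6 of Deng et al.), and your argument is the one that reference supplies; the only cosmetic slip is the pairing of interfaces, since $y=y_1(h)$ and $y=y_2(h)$ correspond to $x=x_\mathrm{zero}(h)$ and $x=x_\mathrm{aggr}(h)$ respectively (with $x_\mathrm{lavs}(h)$ being the edge of the effective domain rather than an interface between two pieces), which does not affect the substance.
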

\begin{proof}
The proof is the same as Lemma 5.6 in \cite{DengLiPY2020arXiv}, so we omit it.
\end{proof}

\begin{lemma}\label{lemma: pi_Lip}
The function $\pi^*$ is Lipschitz on $\mathcal{C}$.
\end{lemma}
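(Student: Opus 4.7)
The plan is to establish the Lipschitz property by showing that on each of the three subregions carved out by the boundaries $x_{\mathrm{zero}}(h)$ and $x_{\mathrm{aggr}}(h)$, the feedback $\pi^*(x,h)$ is $C^{1}$ with locally bounded partial derivatives, and that these pieces match continuously across the separating curves so that the mean value theorem yields the desired estimate.

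First, in the lowest region $\{x<x_{\mathrm{zero}}(h)\}$ the formula in \eqref{eq: pi_beta<1} reduces to $\pi^*(x,h)=\frac{\mu-r}{\sigma^{2}}(1-r_{2})x$, which is linear in $x$ and independent of $h$; hence it is globally Lipschitz with constant $\frac{(\mu-r)(1-r_{2})}{\sigma^{2}}$. In the two remaining regions $\pi^*$ is written as a combination of $C_{i}(h)$ and powers of $f(x,h)$. By Lemma \ref{lemma: 5}, $f$ is $C^{1}$ with $f_{x}=1/g_{y}$ and $f_{h}=-g_{h}(f,h)\,f_{x}$, and Lemma \ref{lemma: beta<1} (convexity of $v$) ensures $g_{y}>0$ so that $f_{x}$ is strictly positive and finite on compact subsets. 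Differentiating \eqref{eq: pi_beta<1} with respect to $x$ gives an expression of the form
\begin{equation*}
\pi^*_{x}(x,h)=\frac{\mu-r}{\sigma^{2}}\Big(a_{1}(h)f(x,h)^{r_{1}-2}+a_{2}(h)f(x,h)^{r_{2}-2}+a_{3}(h)f(x,h)^{\gamma_{1}-2}\Big)f_{x}(x,h),
\end{equation*}
with analogous formulas in the upper region, and these are bounded on compact subsets by the explicit expressions of $C_{i}(h)$ in \eqref{paraC}, together with the asymptotic estimates in Lemma \ref{remark: C_order}.

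Next, the derivatives in $h$ are handled the same way: $\pi^{*}_{h}$ is a sum of $C_{i}'(h)f^{r_{i}-1}$ terms and $C_{i}(h)(r_{i}-1)f^{r_{i}-2}f_{h}$ terms. The differentiability of the $C_{i}(h)$'s follows from the defining equations \eqref{paraC} and the implicit differentiation of \eqref{eq: beta<1,tagent z} for $z(h)$; boundedness on compact subsets follows from the asymptotic orders in Lemma \ref{remark: C_order} combined with the bounds on $f_{h}=-g_{h}\,f_{x}$. Continuity of $\pi^*$ across the boundary curves $x=x_{\mathrm{zero}}(h)$ and $x=x_{\mathrm{aggr}}(h)$ is a direct consequence of the smooth-fit conditions imposed in Proposition \ref{prop: dual_solution_beta<1} at $y=y_{1}(h)$ and $y=y_{2}(h)$: those conditions pin down the first two derivatives of $v$ in $y$, and $\pi^{\dag}=\frac{\mu-r}{\sigma^{2}}y v_{yy}$ inherits continuity because two of the three formulas in \eqref{eq: pi_y_beta<1} must agree at each common boundary, as verified by plugging the solved $C_{i}(h)$ into both sides.

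The main obstacle is controlling the derivatives uniformly in $h$ on $\mathcal{C}$: the coefficient functions $C_{i}(h)$ are implicit through $z(h)$, $w(h)$, $y_{1}(h)$, and $y_{2}(h)$, and one must show that $f_{x}$ stays away from $0$ and $\infty$ on bounded subsets. This is where Lemma \ref{lemma: beta<1} is essential (it guarantees $v_{yy}>0$, hence $g_{y}>0$), and the asymptotic bounds of Lemma \ref{remark: C_order} propagate through to $C_{i}'(h)$. Once these ingredients are assembled, applying the mean value theorem to each region separately, combined with the continuity at the interfaces, yields the local Lipschitz property of $\pi^*$ on $\mathcal{C}$, which is all that is needed to obtain strong existence and uniqueness of the SDE \eqref{eq: SDE} in Corollary \ref{cor: beta<1}.
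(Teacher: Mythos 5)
Your outline delivers only \emph{local} Lipschitz continuity, while the lemma asserts that $\pi^*$ is Lipschitz on all of $\mathcal{C}$, which is an unbounded set. Every bound you invoke is ``on compact subsets,'' and you even conclude with ``the local Lipschitz property of $\pi^*$ on $\mathcal{C}$.'' That is not the statement to be proved, and the gap is not cosmetic: the whole difficulty of this lemma is obtaining bounds on $\frac{\partial \pi^*}{\partial x}$ and $\frac{\partial \pi^*}{\partial h}$ that are \emph{uniform in $(x,h)$}, in particular as $h\rightarrow 0$ and $h\rightarrow +\infty$, where the implicit coefficients $C_i(h)$, $y_1(h)$, $y_2(h)$, $w(h)$ degenerate or blow up.

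Concretely, naively differentiating \eqref{eq: pi_beta<1} as you propose produces terms like $C_3(h)f^{r_1-2}f_x$ and $C_4(h)f^{r_2-2}f_x$ that are \emph{not} individually bounded uniformly in $h$; the asymptotic orders of Lemma \ref{remark: C_order} alone do not rescue this. The paper's proof exploits a cancellation: differentiating the identity $x=-v_y(f(x,h),h)$ (i.e., \eqref{eq: f_2_beta<1}, \eqref{eq: f_3_beta<1}) and using $r_1(r_1-1)=r_2(r_2-1)=\frac{2r}{\kappa^2}$, one rewrites $\frac{\partial\pi^*}{\partial x}=\frac{\mu-r}{\sigma^2}\big(\frac{A_1}{B_1}+(1-r_2)\big)$ (and analogously with $A_2,B_2$ in the top region), and then shows $0>\frac{A_1}{B_1}\geq -C$ and $0>\frac{A_2}{B_2}\geq r_2-r_1$ with constants \emph{independent of $h$}, by comparing $A_i$ and $B_i$ term by term using the sign information $C_3(h)>0$, $C_5(h)>0$, $C_6(h)>0$ from Lemma \ref{lemma: beta<1} and the inequality $y_2(h)\leq f\leq y_1(h)$. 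A similar, and more delicate, ratio argument is needed for $\frac{\partial\pi^*}{\partial h}$, where one must additionally bound quantities such as $C_3'(h)f_2(x,h)^{r_1-1}$ and $C_4'(h)f_2(x,h)^{r_2-1}$ uniformly via the relations $C_6'(h)=-C_5'(h)y_3(h)^{r_1-r_2}$ and $y_3(h)=(1-\lam)y_2(h)$. None of this uniformity appears in your sketch, so as written the argument would fail to establish the claimed global Lipschitz property (it would only give the statement on compact subsets, which is strictly weaker than what the lemma asserts).
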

\begin{proof}
By \eqref{eq: c_y_beta<1}, \eqref{eq: pi_y_beta<1} and the inverse transform, we can express $c^*$ and $\pi^*$ in terms of the primal variables as in \eqref{eq: c_beta<1} and \eqref{eq: pi_beta<1}.
Combining the expressions of $c^*$ and $\pi^*$ with Proposition \ref{prop: dual_solution_beta<1} which implies that the coefficients $(C_i)_{2\leq i\leq5}$ are $C^1$,
Lemma \ref{lemma: 5} which implies that the $C^1$ regularity of $f$, together with the continuity of $f$ at the boundary between the three regions, we can draw the conclusion that $(x,h)\rightarrow c^*(x,h)$ and $(x,h)\rightarrow \pi^*(x,h)$ are locally Lipschitz on $\mathcal{C}$.


\emph{(i)} Boundedness of $\frac{\partial\pi^*}{\partial x}$.

First using $\pi^*$ in \eqref{eq: pi_beta<1}, we have
\begin{equation}\label{eq: pi_d_beta<1}
\begin{aligned}
&\frac{\partial \pi^*}{\partial x}(x,h) = \frac{\mu-r}{\sigma^2} \\
\times&
\begin{cases}
1-r_2, & \mbox{if } x < x_\mathrm{zero}(h), \\
\begin{aligned}
&\bigg(\frac{2r}{\kappa^2} C_3(h)(r_1-1)f_2^{r_1-2}(x,h)\frac{\partial f_2}{\partial x} +  \frac{2r}{\kappa^2} C_4(h)(r_2-1)f_2^{r_2-2}(x,h)\frac{\partial f_2}{\partial x} \\
&+ \frac{2(\gamma_1-1)^2}{\kappa^2(\gamma_1-r_1)(\gamma_1-r_2)}f_2^{\gamma_1-2}(x,h)\frac{\partial f_2}{\partial x}\bigg),
\end{aligned}
& \mbox{if } x_\mathrm{zero}(h) \leq x \leq x_\mathrm{aggr}(h), \\
\begin{aligned}
\frac{2r}{\kappa^2} C_5(h)(r_1-1)f^{r_1-2}(x,h)\frac{\partial f_2}{\partial x} +  \frac{2r}{\kappa^2} C_6(h)(r_2-1)f^{r_2-2}(x,h)\frac{\partial f_2}{\partial x} ,
\end{aligned}
& \mbox{if } x_\mathrm{aggr}(h)< x\leq x_\mathrm{lavs}(h).
\end{cases}
\end{aligned}
\end{equation}
Note that the first line is constant and hence bounded.
For the second line, by differentiating \eqref{eq: f_2_beta<1} and using the fact that $r_1(r_1-1) = r_2(r_2-1) = \frac{2r}{\kappa^2}$, we have that
$$\begin{aligned}
1 = &-\frac{2r}{\kappa^2}C_3(h)f_2(x,h)^{r_1-2}\frac{\partial f_2}{\partial x}(x,h)
-\frac{2r}{\kappa^2}C_4(h)f_2(x,h)^{r_2-2}\frac{\partial f_2}{\partial x}(x,h) \\
&-\frac{2(\gamma_1-1)}{\kappa^2(\gamma_1-r_1)(\gamma_1-r_2)}f_2(x,h)^{\gamma_1-2}\frac{\partial f_2}{\partial x}(x,h).
\end{aligned}$$
Plugging this back to $\frac{\partial \pi^*}{\partial x}$, we can obtain
\begin{align*}
\frac{\partial \pi^*}{\partial x}(x,h)
= \frac{\mu-r}{\sigma^2}\bigg\{ \bigg(\frac{2r}{\kappa^2}C_3(h)f_2^{r_1-1}(x,h)(r_1-r_2) + \frac{2(\gamma_1-1)}{\kappa^2(\gamma_1-r_1)}f_2^{\gamma_1-1}(x,h)\bigg)\frac{1}{f_2}\frac{\partial f_2}{\partial x} + (1-r_2) \bigg\}
\end{align*}
Combining with Lemma \ref{lemma: 5}, we can obtain
$\frac{\partial \pi^*}{\partial x}(x,h) = \frac{\mu-r}{\sigma}(\frac{A_1}{B_1}+(1-r_2))$,
where
\begin{equation}\label{eq: A1B1_def}\begin{aligned}
A_1 &:= \frac{2r}{\kappa^2}C_3(h)f_2^{r_1-1}(x,h)(r_1-r_2) + \frac{2(\gamma_1-1)}{\kappa^2(\gamma_1-r_1)}f_2^{\gamma_1-1}(x,h), \\
B_1 &:= -\frac{2r}{\kappa^2}C_3(h)f_2(x,h)^{r_1-1} - \frac{2r}{\kappa^2}C_4(h)f_2(x,h)^{r_2-1} - \frac{2(\gamma_1-1)}{\kappa^2(\gamma_1-r_1)(\gamma_1-r_2)}f_2(x,h)^{\gamma_1-1}.
\end{aligned}\end{equation}

We next show that $A_1>0$ and $B_1<0$, and $\frac{A_1}{B_1}$ is bounded.
We only need to discuss the case that $y_1(h) > y_2(h) = ((1-\lam)h)^{\beta_1-1}$, because the second region reduces to a point for any fixed $h$ if $y_1(h) = y_2(h)$.
Indeed, it is obvious that $A_1>0$ since $C_3(h)>0$ according to the proof of Lemma \ref{lemma: beta<1} and $\gamma_1<0$.
Moreover, we have that
$$\begin{aligned}
A_1 &= \frac{2}{\kappa^2}\bigg( r(r_1-r_2)C_3(h)+\frac{\gamma_1-1}{\gamma_1-r_1}f_2^{\gamma_1-r_1}(x,h) \bigg)f_2^{r_1-1}(x,h) \\
&= \frac{2}{\kappa^2}\bigg( \frac{r_2}{\gamma_1-r_1}y_1(h)^{\gamma_1-r_1} + \frac{\lam}{(1-\lam)^{(\gamma_1-1)(\beta_1-1)}} y_2(h)^{\gamma_1-1} y_1(h)^{1-r_1} +\frac{\gamma_1-1}{\gamma_1-r_1}f_2^{\gamma_1-r_1}(x,h)\bigg)f_2^{r_1-1}(x,h)\\
&\leq  K_1\big(y_1(h)^{1-r_1}y_2(h)^{\gamma_1-1}f_2(x,h)^{r_1-1} + f_2(x,h)^{\gamma_1-1} ),
\end{aligned}$$
where $K_1$ is some positive constant.
For $B_1$, according to the proof of Lemma \ref{lemma: beta<1}, we have that
$$\begin{aligned}
B_1 &= -\frac{2r}{\kappa^2}C_3(h)f_2(x,h)^{r_1-1} - \frac{2r}{\kappa^2}C_4(h)f_2(x,h)^{r_2-1} - \frac{2(\gamma_1-1)}{\kappa^2(\gamma_1-r_1)(\gamma_1-r_2)}f_2(x,h)^{\gamma_1-1} \\
&\leq -\frac{2r}{\kappa^2}C_3(h)f_2(x,h)^{r_1-1} - Cf_2(x,h)^{\gamma_1-1} \\
&\leq - K_2 ( y_1(h)^{1-r_1}y_2(h)^{\gamma_1-1}f_2(x,h)^{r_1-1} + f_2(x,h)^{\gamma_1-1}),
\end{aligned}$$
where $K_2$ is some positive constant. Therefore, $0 > \frac{A_1}{B_1} \geq -C$ for some positive constant $C$ independent of $h$, and thus $\frac{\partial \pi^*}{\partial x}$ in the second line of \eqref{eq: pi_d_beta<1} is bounded.

For the third line, by differentiating \eqref{eq: f_3_beta<1} and using the fact that $r_1(r_1-1) = r_2(r_2-1) = \frac{2r}{\kappa^2}$, we have that
$1 = -\frac{2r}{\kappa^2}C_5(h)f_3(x,h)^{r_1-2}\frac{\partial f_3}{\partial x}(x,h)
-\frac{2r}{\kappa^2}C_6(h)f_3(x,h)^{r_2-2}\frac{\partial f_3}{\partial x}(x,h)$. Putting this back to the third line of \eqref{eq: pi_d_beta<1}, we can obtain
$\frac{\partial\pi^*}{\partial x}(x,h)
= \frac{\mu-r}{\sigma^2}\{ \frac{A_2}{B_2} + (1-r_2)\},
$
where
\begin{equation}\label{eq: A2B2_def}\begin{aligned}
A_2 &:= \frac{2r(r_1-r_2)}{\kappa^2}C_5(h)f_3^{r_1-1}(x,h), \\
B_2 &:= -\frac{2r}{\kappa^2}C_5(h)f_3(x,h)^{r_1-1} - \frac{2r}{\kappa^2}C_6(h)f_3(x,h)^{r_2-1},
\end{aligned}\end{equation}
by combining with the results of Lemma \ref{lemma: 5}.
In fact, by the proof of Lemma \ref{lemma: beta<1}, we have $C_5(h)>0$ and $C_6(h)>0$, therefore, $A_2>0$ and $B_2<0$.
Moreover, we have that
$$
B_2 = -\frac{2r}{\kappa^2}C_5(h)f_3(x,h)^{r_1-1} - \frac{2r}{\kappa^2}C_6(h)f_3(x,h)^{r_2-1}
\leq -\frac{2r}{\kappa^2}C_5(h)f_3(x,h)^{r_1-1},
$$
and thus
$0 > \frac{A_2}{B_2} \geq r_2-r_1$, indicating that $\frac{\partial \pi^*}{\partial x}$ is bounded in the third line.\\

\emph{(ii)} Boundedness of $\frac{\partial\pi^*}{\partial h}$.

First, using equations \eqref{eq: f_x} and \eqref{eq: f_h} and the definition of $g(\cdot,h) = -v_y(\cdot,h)$, we have
{\small
$$\begin{aligned}
&f_h(x,h) = -g_h(f,h)\cdot f_x(x,h) \\
=& \begin{cases}
C_2'(h)r_2f_1(x,h)^{r_2-1}\cdot\bigg(-\frac{2r}{\kappa^2}C_2(h)f_1(x,h)^{r_2-2}\bigg)^{-1}, &\mbox{if } x< x_{\mathrm{zero}}(h),\\
\begin{aligned}
&\bigg( C_3'(h)r_1f_2(x,h)^{r_1-1} + C_4'(h)r_2f_2(x,h)^{r_2-1} -\frac{\lam}{r} \bigg) \\
\times&\bigg(-\frac{2r}{\kappa^2}C_3(h)f_2(x,h)^{r_1-2}
-\frac{2r}{\kappa^2}C_4(h)f_2(x,h)^{r_2-2} \\
&-\frac{2(\gamma_1-1)}{\kappa^2(\gamma_1-r_1)(\gamma_1-r_2)}f_2(x,h)^{\gamma_1-2}\bigg)^{-1},
\end{aligned} &\mbox{if } x_{\mathrm{zero}}(h) \leq x \leq x_\mathrm{aggv}(h),\\
\begin{aligned}
&\bigg( C_5'(h)r_1f_3(x,h)^{r_1-1} + C_6'(h)r_2f_3(x,h)^{r_2-1} -\frac{1}{r} \bigg) \\
\times&\bigg( -\frac{2r}{\kappa^2}C_5(h)f_3(x,h)^{r_1-2}
-\frac{2r}{\kappa^2}C_6(h)f_3(x,h)^{r_2-2} \bigg)^{-1},
\end{aligned}
 &\mbox{if } x_\mathrm{aggv}(h) \leq x \leq x_\mathrm{lavs}(h).
\end{cases}
\end{aligned}$$
}We analyze the derivative $\frac{\partial \pi^*}{\partial h}$ in different regions separately. In the region $x < x_\mathrm{zero}(h)$, $\frac{\partial \pi^*}{\partial h} = 0$, hence it is bounded. In the region $x_\mathrm{zero}(h) \leq x(h) \leq x_\mathrm{aggv}(h)$, we also only need to discuss the case that $y_1(h) > y_2(h) = ((1-\lam)h)^{\beta_1-1}$, and
$$\begin{aligned}
\frac{\partial \pi^*}{\partial h}
= \frac{\mu-r}{\sigma^2}\bigg( &\frac{2r}{\kappa^2}C_3'(h)f_2(x,h)^{r_1-1} + \frac{2r}{\kappa^2}C_3(h)(r_1-1)f_2(x,h)^{r_2-2}\frac{\partial f_2}{\partial h}  \\
+& \frac{2r}{\kappa^2}C_4'(h)f_2(x,h)^{r_2-1} + \frac{2r}{\kappa^2}C_4(h)(r_2-1)f_2(x,h)^{r_2-2}\frac{\partial f_2}{\partial h}\\
+& \frac{2(\gamma_1-1)^2}{\kappa^2(\gamma_1-r_1)(\gamma_1-r_2)}f_2^{\gamma_1-2}(x,h)\frac{\partial f_2}{\partial h}
\bigg).
\end{aligned}$$
By differentiating \eqref{eq: f_2_beta<1} and using the fact that $r_1(r_1-1) = r_2(r_2-1) = \frac{2r}{\kappa^2}$, we have that
$$\begin{aligned}
&C_4'(h)\frac{2r}{\kappa^2}f_2(x,h)^{r_2-1} + C_4(h)\frac{2r}{\kappa^2}(r_2-1)\frac{\partial f_2}{\partial h}f_2(x,h)^{r_2-2}\\
=& -C_3'(h)r_1(r_2-1)f_2(x,h)^{r_1-1} - C_3(h)\frac{2r}{\kappa^2}(r_2-1)\frac{\partial f_2}{\partial h}f_2(x,h)^{r_1-2} \\
&- \frac{2(\gamma_1-1)(r_2-1)}{\kappa^2(\gamma_1-r_1)(\gamma_1-r_2)}\frac{\partial f_2}{\partial h}f_2(x,h)^{\gamma_1-2} + (r_2-1)\frac{\lam}{r}.
\end{aligned}$$
Putting this back to the previous expression of $\frac{\partial \pi^*}{\partial h}$, we can obtain that
\begin{equation}\label{eq: pi_h_R2}\begin{aligned}
\frac{\partial \pi^*}{\partial h}
&= \frac{\mu-r}{\sigma^2}\bigg(
\bigg[\frac{2r}{\kappa^2}(r_1-r_2)C_3(h)f_2(x,h)^{r_1-1} - \frac{2(\gamma_1-1)(r_2-1)}{\kappa^2(\gamma_1-r_1)(\gamma_1-r_2)} f_2(x,h)^{\gamma_1-1} \bigg]\frac{1}{f_2}\frac{\partial f_2}{\partial h} \\
&+ r_1(r_1-r_2)C_3'(h)f_2(x,h)^{r_1-1} + (r_2-1)\frac{\lam}{r}
\bigg) \\
&= \frac{\mu-r}{\sigma^2}\bigg( A_1\cdot \frac{1}{f_2}\frac{\partial f_2}{\partial h}
+ r_1(r_1-r_2)C_3'(h)f_2(x,h)^{r_1-1} - \frac{\lam}{r}(1-r_2) \bigg),
\end{aligned}\end{equation}
where $A_1$ is defined in \eqref{eq: A1B1_def}.
In \eqref{eq: pi_h_R2}, the third term is a constant.
For the second term, by the proof of Lemma \ref{lemma: beta<1}, we first have $C_3'(h)>0$, and
$$\begin{aligned}
C_3'(h) &= \frac{1}{r(r_1-r_2)h}(kr_2(\lam h)^{\beta_2} + \lam hr_1 w(h)^{\beta_1-1})w(h)^{-r_1(\beta_1-1)} \\
&= \frac{1}{r(r_1-r_2)}\bigg(\lam(r_1+r_2\beta_2)y_1(h)^{1-r_1} + \frac{r_2\beta_2}{\gamma_1h}y_1(h)^{\gamma_1-r_1} \bigg).
\end{aligned}$$
In the sequel of the proof below, let $K_1>0$ be a generic positive constant independent of $(x,h)$, which may be different from line to line.
Hence, we have that
$$\begin{aligned}
r_1(r_1-r_2)C_3'(h)f_2(x,h)^{r_1-1}
&= \bigg(\lam(r_1+r_2\beta_2)y_1(h)^{1-r_1} + \frac{r_2\beta_2}{\gamma_1h}y_1(h)^{\gamma_1-r_1} \bigg)f_2(x,h)^{r_1-1} \\
&\leq \bigg(\lam(r_1+r_2\beta_2)y_1(h)^{1-r_1} + \frac{r_2\beta_2}{\gamma_1h}y_1(h)^{\gamma_1-r_1} \bigg)y_1(x,h)^{r_1-1} \\
&\leq K_1\bigg( 1 + \bigg( \frac{y_2(h)}{y_1(h)}\bigg)^{1-\gamma_1} \bigg)
\leq K_1.
\end{aligned}$$
Therefore, the second term is bounded.

For the first term in \eqref{eq: pi_h_R2}, by virtue of $A_1 \leq C(y_1(h)^{1-r_1}y_2(h)^{\gamma_1-1}f_2(x,h)^{r_1-1} + f_2(x,h)^{\gamma_1-1})$, it is enough to show that $\frac{1}{f_2}\frac{\partial f_2}{\partial h} \geq C(y_1(h)^{1-r_1}y_2(h)^{\gamma_1-1}f_2(x,h)^{r_1-1} + f_2(x,h)^{\gamma_1-1})$ for some positive constant $C$.
Indeed, we have that
$\frac{1}{f_2}\frac{\partial f_2}{\partial h}
= \left( C_3'(h)r_1f_2(x,h)^{r_1-1} + C_4'(h)r_2f_2(x,h)^{r_2-1} - \frac{\lam}{r} \right) \times \frac{1}{B_1}$,
where $B_1$ is defined in \eqref{eq: A1B1_def}.
As $C_3'(h)r_1f_2(x,h)^{r_1-1}$ and $\frac{\lam}{r}$ are bounded, it is sufficient to show that $C_4'(h)r_2f_2(x,h)^{r_2-1}$ is bounded.
As $C_6'(h) = -C_5'(h)y_3(h)^{r_1-r_2}$, let $K_2$ be a generic constant that may differ from line to line, we have that
$$\begin{aligned}
|C_4'(h)| &= |C_4'(h) - C_6'(h) + C_6'(h)| \\
&= \bigg|\frac{(\gamma_1-r_2)(\beta_1-1)}{(r_2-r_1)(1-\beta_1)(\gamma_1-r_2)}(1-\lam)^{(\gamma_1-r_2)(\beta_1-1)}h^{r_1(\beta_1-1)}
- C_5'(h)y_3(h)^{r_1-r_2}\bigg| \\
&= \bigg| K_2h^{r_1(\beta_1-1)} - \big(C_3'(h) - (C_3'(h)-C'_5(h))\big)y_3(h)^{r_1-r_2}\bigg| \\
&= | K_2h^{r_1(\beta_1-1)} - C_3'(h)y_3(h)^{r_1-r_2}| \\
&\leq  K_2h^{r_1(\beta_1-1)} + K_2\big(y_1(h)^{1-r_1} + y_2(h)^{1-\gamma_1}y_1(h)^{\gamma_1-r_1}\big)y_2(h)^{r_1-r_2} \\
&\leq K_2\big( y_2(h)^{r_1} + y_2(h)^{r_1-r_2}y_1(h)^{1-r_1} + y_2(h)^{2r_1-\gamma_1}y_1(h)^{\gamma_1-r_1} \big),
\end{aligned}$$
where the first inequality holds because of $y_3(h) = (1-\lam)y_2(h)$.
Similarly, let $K_3$ be the constant that may differ from line to line, it follows that
$$\begin{aligned}
|C_4'(h)r_2(f_2(x,h)^{r_2-1})|
&=  K_3|C'_4(h)|f_2(x,h)^{r_2-1} \\
&\leq  K_3|C'_4(h)|y_2(h)^{r_2-1} \\
&\leq K_3\big( y_2(h)^{r_1} + y_2(h)^{r_1-r_2}y_1(h)^{1-r_1} + y_2(h)^{2r_1-\gamma_1}y_1(h)^{\gamma_1-r_1} \big)y_2(h)^{r_2-1}\\
&= K_3\bigg( 1 + \bigg(\frac{y_2(h)}{y_1(h)}\bigg)^{r_1-1} + \bigg( \frac{y_2(h)}{y_1(h)}\bigg)^{r_1-\gamma_1} \bigg),
\end{aligned}$$
which is bounded as $y_1(h)>y_2(h)$.

In the region $x_\mathrm{aggv}(h) \leq x(h) \leq x_\mathrm{lavs}(h)$, similar computations yield that $$\frac{\partial \pi^*}{\partial h}=\frac{\mu-r}{\sigma^2}\bigg( A_2\cdot \frac{1}{f_3}\frac{\partial f_3}{\partial h}
+ r_1(r_1-r_2)C_5'(h)f_3(x,h)^{r_1-1} - \frac{1}{r}(1-r_2) \bigg),$$
where $A_2$ is defined in \eqref{eq: A2B2_def}.
For the term $r_1(r_1-r_2)C_5'(h)f_3(x,h)^{r_1-1}$, due to $C_5'(h) < C_3'(h)$, we have that
$r(r_1-r_2)C_5'(h)f_3(x,h)^{r_1-1}
\leq r(r_1-r_2)C_3'(h)f_2(\bar{x},h)^{r_1-1},$
which is bounded as $f_3(x,h) < y_2(h) \leq f_2(\bar{x},h)$, where $\bar{x}$ is chosen such that $y_2(h) \leq f_2(\bar{x},h) \leq y_1(h)$.
Moreover, for the term $A_2\cdot \frac{1}{f_3}\frac{\partial f_3}{\partial h}$,
similar to the proof in the region $y_2(h)\leq f_2(x,h)\leq y_1(h)$, it is enough to check that $C_6'(h)f_3(x,h)^{r_2-1}$ is bounded.
Indeed, we can obtain
$$\begin{aligned}
|C_6'(h)f_3(x,h)^{r_2-1}|
= |C_5'(h)y_3(h)^{r_1-r_2}f_3(x,h)^{r_2-1}|
\leq C_5'(h)f_3(x,h)^{r_1-1},
\end{aligned}$$
which is shown to be bounded. Putting all the pieces together completes the proof.
\end{proof}

%


\subsection{Proof of Proposition \ref{prop: samevalue} (Concavification Principle)}\label{sec: proof_prop_samevalue}
To prove this proposition, we claim that under the optimal controls $c_t^*$ and $\pi_t^*$, it holds that $\tU(c_t^*, H_t^*) = U(c_t^* - \lam H_t^*)$ all the time.
In fact, for any $(x,h) \in\mathcal{C}$, according to the definition of concave envelop $\tU(x,h)$ of $U^*(x,h)$ in $x\in[0,h]$ in \eqref{eq: ce_beta<1}, we can easily see that $\tU(x,h) = U^*(x,h)$ if $x\in \mathcal{C}_h := \{0\}\cup[z(h), h]$, where $z(h)$ is defined in Section \ref{sec: ce}.
We verify the claim in all the regions of the wealth $X_t^*$.

If $X_t^* < x_\mathrm{zero}(H_{t}^*)$, then $c_t^* = 0 \in\mathcal{C}_{H_t^*}$, indicating that $\tU(c_t^*, H_t^*) = U(c_t^*-\lam H_t^*)$.

If $x_\mathrm{zero}(H_{t}^*) \leq X_t^* \leq x_\mathrm{aggr}(H_{t}^*)$, yielding the existence of the solution $z(H_t^*)$ for equation \eqref{eq: beta<1,tagent z} with $h = H_t^*$.
Moreover, the optimal consumption satisfies that $z(H_t^*) \leq c_t^* = \lam H_{t}^* + (f(X_t^*, H_{t}^*))^{\frac1{\beta_1-1}} \leq H_t^*$, where $f(x,h)$ is defined in Corollary \ref{cor: beta<1}.
This leads to the fact that $c_t^* \in \mathcal{C}_{H_t^*}$ and thus $\tU(c_t^*, H_t^*) = U(c_t^* - \lam H_t^*)$.

If $x_\mathrm{aggr}(H_t^*) < X_t^* \leq x_\mathrm{lavs}(H_t^*)$, then $c_t^* = H_t^* \in \mathcal{C}_{H_t^*}$, indicating that $\tU(c_t^*, H_t^*) = U(c_t^* - \lam H_t^*)$.

Therefore, we have verified that the optimal consumption rate $c_t^*$ always leads to  $U(c_t^* - \lam H_t^*) = \tU(c_t^*, H_t^*)$.
Thus, given the optimal portfolio $\pi^*_t$ and $c^*_t$ for the stochastic control problem \eqref{eq: concavevalue}, based on the fact that $\tU(x, h) \geq U(x-\lam h)$ everywhere and corresponding $\tu \geq u$, we have
\begin{equation*}
\tu(x,h) = \mathbb{E}\bigg[ \int_0^\infty e^{-rt}\tU(c_t^*, H_t^*)dt \bigg] = \mathbb{E}\bigg[ \int_0^\infty e^{-rt}U(c_t^* - \lam H_t^*)dt \bigg] \leq u(x,h) \leq \tu(x,h),
\end{equation*}
that is, $\tu = u$, and the optimal portfolio and consumption for \eqref{eq: primalvalue} are the same as \eqref{eq: concavevalue}.

\subsection{Proof of Lemma \ref{lemma: beta<1}}\label{sec: proof_lemma_beta<1}
We prove $v_yy(y,h)>0$ in three regions: $y>y_1(h)$, $y_2(h)\leq y\leq y_1(h)$, and $y_3(h)\leq y < y_2(h)$,  respectively.

(i) In the region $y_3(h) \leq y < y_2(h)$, $v_{yy}(y, h) = r_1(r_1-1)C_5(h)y^{r_1-2} + r_2(r_2-1)C_6(h)y^{r_2-2}$.

As $r_1(r_1-1) = r_2(r_2-1) = \frac{2r}{\kappa^2} > 0$,
we only need prove $C_5(h) > 0$ and $C_6(h)>0$.
We separate the proof into two cases: the case that $y_1(h) > y_2(h)$ and that $y_1(h) = y_2(h)$.
If $y_1(h) = w(h)^{\beta_1-1} > y_2(h) = ((1-\lam)h)^{\beta_1-1}$, we deduce that
$$\begin{aligned}
C_3(h) &= \frac{y_1(h)^{-r_1}}{r(r_1-r_2)}
\bigg( \frac{kr_2}{\beta_2}(\lam h)^{\beta_2} + \frac{r_1r_2}{(\gamma_1-r_1)\gamma_1}y_1(h)^{\gamma_1}+ \lam h r_1 y_1(h) \bigg),\\
&= \frac{w(h)^{-r_1(\beta_1-1)}}{r(r_1-r_2)}\bigg( \frac{r_2}{\gamma_1}w(h)^{\beta_1} + \lam hr_2 w(h)^{\beta_1-1} + \frac{r_1r_2}{(\gamma_1-r_1)\gamma_1}w(h)^{\beta_1} + \lam h r_1 w(h)^{\beta_1-1} \bigg) \\
&= \frac{w(h)^{-r_1(\beta_1-1)}}{r(r_1-r_2)}\bigg( \frac{r_2}{\gamma_1-r_1}w(h)^{\beta_1} + \lam h w(h)^{\beta_1-1} \bigg)>0,
\end{aligned}$$
and
$$\begin{aligned}
C_3(h) - C_5(h) &= \frac{y_2(h)^{-r_1}}{r(r_1-r_2)}\bigg( -\frac{r_2}{\beta_1}((1-\lam)h)^{\beta_1}+ \frac{r_1r_2}{\gamma_1(\gamma_1-r_1)}y_2(h)^{\gamma_1} - (1-\lam) h r_1y_2(h) \bigg) \\
&= \frac{1}{r(r_1-r_2)(1-\beta_1)(\gamma_1-r_1)} ((1-\lam)h)^{r_2\beta_1+r_1} < 0,
\end{aligned}$$
therefore, we have $C_5(h) = C_3(h) - (C_3(h)-C_5(h)) > 0$.

We next prove that $C_6'(h) < 0$, and hence $C_6(h) = -\int_h^\infty C'_6(s) ds > 0$. It is easy to see that $C_3'(h)-C_5'(h)<0$, and hence $C_5'(h) > C_3'(h)> 0$,
where the second inequality follows from
$$\begin{aligned}
C_3'(h)
&= \frac1{r(r_1-r_2)}\bigg(  \bigg(-\frac{k}{\beta_2}(\lam h)^{\beta_2}+\frac1{\gamma_1}w(h)^{\beta_1} + \lam h w(h)^{\beta_1-1} \bigg)r_1r_2(\beta_1-1)w(h)^{-r_1\beta_1-r_2}w'(h) \bigg) \\
&~~+ \frac1{r(r_1-r_2)h}(kr_2(\lam h)^{\beta_2} + \lam h r_1 w(h)^{\beta_1-1})w(h)^{-r_1(\beta_1-1)}> 0,
\end{aligned}$$
thanks to $\frac{k}{\beta_2}(\lam h)^{\beta_2} - \frac1{\gamma_1}w(h)^{\beta_1} - \lam h w(h)^{\beta_1-1} \leq 0$ and $w'(h) > 0$.

Along the free boundary condition \eqref{eq: free_boundary_dual}, we have
$C'_5(h)y_3(h)^{r_1} + C'_6(h)y_3(h)^{r_2}  = 0$,
therefore, we deduce that $C_6'(h) = -C_5'(h)y_3(h)^{r_1-r_2} < 0$.

We then consider the case that
$y_1(h) = y_2(h) = \frac{k}{\beta_2}\lam^{\beta_2}h^{\beta_2-1} +
\frac{1}{\beta_1}(1-\lam)^{\beta_1}h^{\beta_1-1} \leq ((1-\lam)h)^{\beta_1-1},$
in which we have that
$$\begin{aligned}
C_5(h) &= \frac{y_1(h)^{r_2-1}}{r(r_1-r_2)}\bigg( \frac{kr_2}{\beta_2}(\lam h)^{\beta_2} + \frac{r_2}{\beta_1}((1-\lam)h)^{\beta_1} + hr_1y_1(h) \bigg)= \frac{h}{r(r_1-r_2)}y_1(h)^{r_2} > 0,
\end{aligned}$$
and
$C_5'(h) = \frac{y_1(h)^{r_2-1}}{r(r_1-r_2)}\bigg( y_1(h) + r_2hy_1'(h) \bigg)  > 0$. Thus, it holds that $C_6'(h) = -C_5'(h)y_3(h)^{r_1-r_2} < 0$, implying that $C_6(h)>0$ when $y_1(h) = y_2(h)$.

(ii) In the region $y_2(h) \leq y \leq y_1(h)$, we only need to consider the case that $y_1(h) = w(h)^{\beta_1-1} > y_2(h) = ((1-\lam)h)^{\beta_1-1}$, otherwise the second-order derivative of $v(y,h)$ in $y$ is trivial because this region reduces to a point.

Because $C_3(h)>0$, $C_4(h) > C_4(h) - C_6(h)$, $r_1(r_1-1) = r_2(r_2-1) = \frac{2r}{\kappa^2}$,
we deduce that
$$\begin{aligned}
v_{yy}(y, h)
&= \frac{2r}{\kappa^2}\bigg( C_3(h)y^{r_1-\gamma_1} + C_4(h)y^{r_2-\gamma_1} + \frac{\gamma_1-1}{r(\gamma_1-r_1)(\gamma_1-r_2)} \bigg)y^{\gamma_1-2} \\
&> \frac{2r}{\kappa^2}\bigg( (C_4(h)-C_6(h))y^{r_2-\gamma_1} + \frac{\gamma_1-1}{r(\gamma_1-r_1)(\gamma_1-r_2)} \bigg)y^{\gamma_1-2} \\
&\geq \frac{2r}{\kappa^2}\bigg( (C_4(h)-C_6(h))((1-\lam)h)^{(r_2-\gamma_1)(\beta_1-1)} + \frac{\gamma_1-1}{r(\gamma_1-r_1)(\gamma_1-r_2)} \bigg)y^{\gamma_1-2},
\end{aligned}$$
where the last inequality holds because $y \geq ((1-\lam)h)^{\beta_1-1}$, $\gamma_1>r_2$, and $C_4(h)-C_6(h)<0$.
Moreover, we have that
$$\begin{aligned}
(C_4(h)-C_6(h))((1-\lam)h)^{(r_2-\gamma_1)(\beta_1-1)} + \frac{\gamma_1-1}{r(\gamma_1-r_1)(\gamma_1-r_2)}
= \frac{\gamma_1-1}{r(\gamma_1-r_1)(r_1-r_2)}
>0.
\end{aligned}$$
Thus, we deduce that $v_{yy}(y,h) > 0$.

(iii) In the region $y> y_1(h)$, $v_{yy}(y,h) = r_2(r_2-1)C_2(h)y^{r_2-2}$.
Since $r_2(r_2-1) = \frac{2r}{\kappa^2} > 0$, we only need to prove that $C_2(h)> 0$.
We also discuss $C_2(h)>0$ in two cases that $y_1(h) > y_2(h)$ or $y_1(h) = y_2(h)$ respectively.

If $y_1(h) > y_2(h)$, indicating that $
y_1(h) = w(h)^{\beta_1-1},
$ we have $\frac{k}{\beta_2}(\lam h)^{\beta_2}-\frac1{\gamma_1}w(h)^{\beta_1} - \lam h w(h)^{\beta_1-1} = 0$.
Similar to the proof of $C_5(h) > 0$, we have
$$\begin{aligned}
C_2(h) &> C_2(h)-C_6(h) = (C_2(h)-C_4(h)) + (C_4(h) - C_6(h)) \\
&= \frac{w(h)^{-r_2(\beta_1-1)}}{r(r_1-r_2)}\bigg( \frac{r_1}{\gamma_1-r_2}w(h)^{\beta_1} + \lam h w(h)^{\beta_1-1} \bigg) - \frac{1}{r(r_2-r_1)(1-\beta_1)(\gamma_1-r_2)} y_2(h)^{\gamma_1-r_2} \\
&> \frac{r_1}{r(r_1-r_2)(\gamma_1-r_2)}y_1(h)^{\gamma_1-r_2} + \frac{\gamma_1-1}{r(r_1-r_2)(\gamma_1-r_2)}y_2(h)^{\gamma_1-r_2} \\
&> \frac1{r(r_1-r_2)}y_2(h)^{\gamma_1-r_2}
> 0.
\end{aligned}$$
If $y_1(h) = y_2(h)$, similar to the proof of $C_5(h) > 0$, we can obtain that
$C_2(h)> C_2(h) - C_6(h) = \frac{h}{r(r_1-r_2)}y_1(h)^{r_1} > 0$.

\subsection{Proof of Corollary \ref{cor: asy_infty_wealth}}\label{sec: proof_asy_infty_wealth}
\begin{proof}

We first have that
$$\begin{aligned}
&\lim\limits_{h\rightarrow+\infty} C_6(h)h^{-r_2-r_1\beta_1}= \frac1{r_2+r_1\beta_1}\lim\limits_{h\rightarrow+\infty} \frac{C_6'(h)}{h^{r_1(\beta_1-1)}} = -\frac1{r_2+r_1\beta_1}(1-\lam)^{\beta_1(r_1-r_2)}\lim\limits_{h\rightarrow+\infty} \frac{C_5'(h)}{h^{r_2(\beta_1-1)}} \\
=& -\frac{\gamma_1-r_1}{\gamma_1-r_2}(1-\lam)^{\beta_1(r_1-r_2)}
\lim\limits_{h\rightarrow+\infty} C_5(h)h^{-r_1-r_2\beta_1},
\end{aligned}$$
by L'H$\hat{\mathrm{o}}$pital's rule.
To compute $\lim\limits_{h\rightarrow+\infty} C_5(h)h^{-r_1-r_2\beta_1}$, we need to consider two cases that $y_1(h) > y_2(h)$ and $y_1(h) = y_2(h)$.


We first consider the case that $y_1(h) = y_2(h)$ as $h\rightarrow+\infty$, indicating that $\beta_1>1-\lam$ and $\beta_2\leq\beta_1$ in condition (S2) or (S3), therefore,
$C_5(h) = \frac{h}{r(r_1-r_2)}y_1(h)^{r_2}$, and thus
$$\begin{aligned}
\lim\limits_{h\rightarrow+\infty} C_5(h)h^{-r_2\beta_1-r_1}
&= \frac{1}{r(r_1-r_2)}\lim\limits_{h\rightarrow+\infty}
\bigg( \frac{k}{\beta_2}\lam^{\beta_2}h^{\beta_2-\beta_1} + \frac{1}{\beta_1}(1-\lam)^{\beta_1} \bigg)^{r_2} \\
&= \frac{1}{r(r_1-r_2)}
\bigg( \frac{k}{\beta_2}\lam^{\beta_2}\mathbf{1}_{\{\beta_2=\beta_1\}} + \frac{1}{\beta_1}(1-\lam)^{\beta_1} \bigg)^{r_2}.
\end{aligned}$$

Therefore, we can derive that
$$\begin{aligned}
&\lim_{h\rightarrow+\infty} \frac{c^*(x_{\text{lavs}}(h),h)}{x_{\text{lavs}}(h)} = \lim\limits_{h\rightarrow+\infty} \frac{h}{x_\mathrm{lavs}(h)} \\
=& \lim\limits_{h\rightarrow+\infty} \frac{h}{-C_5(h)r_1(1-\lam)^{\beta_1(r_1-1)}h^{(\beta_1-1)(r_1-1)}
- C_6(h)r_2(1-\lam)^{\beta_1(r_2-1)}h^{(\beta_1-1)(r_2-1)} + \frac{h}{r}} \\
=& \bigg(1 - \frac{(1-\lam)^{\beta_1(r_1-1)}\gamma_1}{\gamma_1-r_2}
\bigg( \frac{k}{\beta_2}\lam^{\beta_2}\mathbf{1}_{\{\beta_2=\beta_1\}} + \frac{1}{\beta_1}(1-\lam)^{\beta_1} \bigg)^{r_2}\bigg)^{-1}r ,
\end{aligned}$$
and
$$\begin{aligned}
&\lim_{h\rightarrow+\infty}\frac{\pi^*(x_{\text{lavs}}(h),h)}{x_{\text{lavs}}(h)}
=\lim_{h\rightarrow+\infty}\frac{\pi^*(x_{\text{lavs}}(h),h)}{h}\cdot \frac{h}{x_{\text{lavs}}(h)}\\
=& \frac{2r}{\mu-r}\lim\limits_{h\rightarrow+\infty} \frac{h}{x_\mathrm{lavs}(h)}\times \lim\limits_{h\rightarrow+\infty} \frac{(1-\lam)^{\beta_1(r_1-1)}C_5(h)h^{-r_1-r_2\beta_1} + (1-\lam)^{\beta_1(r_2-1)}C_6(h)h^{-r_2-r_1\beta_1}}{h} \\
=& \frac{2r}{\mu-r} \times \bigg(1 - \frac{(1-\lam)^{\beta_1(r_1-1)}\gamma_1}{\gamma_1-r_2}
\bigg( \frac{k}{\beta_2}\lam^{\beta_2}\mathbf{1}_{\{\beta_2=\beta_1\}} + \frac{1}{\beta_1}(1-\lam)^{\beta_1} \bigg)^{r_2}\bigg)^{-1}  \\ &~~\times\frac{(1-\lam)^{\beta_1(r_1-1)}}{\gamma_1-r_2}\bigg( \frac{k}{\beta_2}\lam^{\beta_2}\mathbf{1}_{\{\beta_2=\beta_1\}} + \frac{1}{\beta_1}(1-\lam)^{\beta_1} \bigg)^{r_2}.
\end{aligned}$$

Let us then consider the other case when $y_1(h) > y_2(h)$. If $\beta_2<\beta_1$, the second term in \eqref{eq: tagent z_eq} converges to 0, and thus $\frac{w(h)}{h}$ converges to a constant $-\lam\gamma_1$.
If $\beta_2 = \beta_1$, the second term in \eqref{eq: tagent z_eq} equals a constant, and $\frac{w(h)}{h}$ becomes a constant $w(1)$ that is the unique solution to $-\frac1{\gamma_1}w(1)^{\beta_1}+\frac{k}{\beta_2}\lam^{\beta_2} - \lam w(1)^{\beta_1-1} = 0$.
Otherwise, if $\beta_2 > \beta_1$, the second term in \eqref{eq: tagent z_eq} goes to infinity as $h\rightarrow+\infty$, indicating that $\frac{w(h)}{h}$ converges to 0.

Thus, we always have that
$$\begin{aligned}
\lim\limits_{h\rightarrow+\infty} C_3(h)h^{-r_1-r_2\beta_1}&= \lim\limits_{h\rightarrow+\infty}\left[\frac{r_2}{r(r_1-r_2)(\gamma_1-r_1)}\bigg( \frac wh \bigg)^{r_1+r_2\beta_1}
+ \frac{\lam}{r(r_1-r_2)}\bigg( \frac wh \bigg)^{r_2(\beta_1-1)}\right] \\
&= \frac{w_0^{r_2(\beta_1-1)} (r_2w_0 + \lam(\gamma_1-r_1))} {r(r_1-r_2)(\gamma_1-r_1)},
\end{aligned}$$
where $w_0 := \lim\limits_{h\rightarrow+\infty} \frac{w(h)}{h}$. It holds that
$$\begin{aligned}
\lim\limits_{h\rightarrow+\infty} C_5(h)h^{-r_1-r_2\beta_1}
&= \frac{w_0^{r_2(\beta_1-1)} (r_2w_0 + \lam(\gamma_1-r_1)) + (\gamma_1-1)(1-\lam)^{r_2\beta_1+r_1}}{r(r_1-r_2)(\gamma_1-r_1)}.
\end{aligned}$$
Then we deduce that
$$\begin{aligned}
&\lim_{h\rightarrow+\infty} \frac{c^*(x_{\text{lavs}}(h),h)}{x_{\text{lavs}}(h)}
= \lim\limits_{h\rightarrow+\infty} \frac{1}{-(1-\lam)^{\beta_1(r_1-1)}\frac{\gamma_1(r_1-r_2)}{\gamma_1-r_2}C_5(h)h^{-r_2\beta_1-r_1} + \frac1r} \\
=& \bigg(1 - \frac{\gamma_1(1-\lam)^{-r_2\beta_1}}{(\gamma_1-r_1)(\gamma_1-r_2)} \bigg( w_0^{r_2(\beta_1-1)} (r_2w_0 + \lam(\gamma_1-r_1)) + (\gamma_1-1)(1-\lam)^{r_2\beta_1+r_1}\bigg) \bigg)^{-1}r
\end{aligned}$$
and
$$\begin{aligned}
&\lim_{h\rightarrow+\infty}\frac{\pi^*(x_{\text{lavs}}(h),h)}{x_{\text{lavs}}(h)}\\
=& \frac{2r}{\mu-r}\lim\limits_{h\rightarrow+\infty} \frac{h}{x_\mathrm{lavs}(h)}\cdot (1-\lam)^{\beta_1(r_1-1)}\frac{r_1-r_2}{\gamma_1-r_2}\cdot
\frac{w_0^{r_2(\beta_1-1)} (r_2w_0 + \lam(\gamma_1-r_1)) + (\gamma_1-1)(1-\lam)^{r_2\beta_1+r_1}}{r(r_1-r_2)(\gamma_1-r_1)} \\
=& \frac{2r(1-\lam)^{-r_2\beta_1}}{\mu-r} \cdot \bigg(1 - \frac{\gamma_1(1-\lam)^{-r_2\beta_1}}{(\gamma_1-r_1)(\gamma_1-r_2)} \bigg( w_0^{r_2(\beta_1-1)} (r_2w_0 + \lam(\gamma_1-r_1)) + (\gamma_1-1)(1-\lam)^{r_2\beta_1+r_1}\bigg) \bigg)^{-1} \\
&\times \frac{ w_0^{r_2(\beta_1-1)} (r_2w_0 + \lam(\gamma_1-r_1)) + (\gamma_1-1)(1-\lam)^{r_2\beta_1+r_1} }{(\gamma_1-r_1)(\gamma_1-r_2)},
\end{aligned}$$
where
$$
w_0 = \begin{cases}
-\lam\gamma_1 , & \mbox{if } \beta_2<\beta_1\leq 1-\lam, \\
w(1), & \mbox{if } \beta_2 = \beta_1, \\
0 , & \mbox{if } \beta_2>\beta_1.
\end{cases}
$$

Recall that $\frac{\pi^*(x)}{x} = \frac{\mu-r}{\sigma^2(1-\beta_1)}$ and $\frac{c^*(x)}{x}= \frac{(\gamma_1-r_1)(\gamma_1-r_2)}{r_1r_2}r$ in the Merton's problem. In our setting, as $\lam \rightarrow0$, it is obvious that $\beta_1 < 1-\lam$.
On the other hand, similar to the discussion of the limit of $\frac{w(h)}{h}$ as $h\rightarrow+\infty$, we have that $w(1)\rightarrow0$ as $\lam \rightarrow0$, and thus $w_0\rightarrow0$ as $\lam\rightarrow0$ in all three scenarios when $y_1(h)>y_2(h)$ as $h\rightarrow+\infty$.
Therefore, we deduce that
$$\begin{aligned}
&\lim\limits_{\lam\rightarrow0} \lim_{h\rightarrow+\infty} \frac{c^*(x_{\text{lavs}}(h),h)}{x_{\text{lavs}}(h)} \\
=& \lim\limits_{\lam\rightarrow0}
\bigg(1 - \frac{\gamma_1(1-\lam)^{-r_2\beta_1}}{(\gamma_1-r_1)(\gamma_1-r_2)} \bigg( w_0^{r_2(\beta_1-1)} (r_2w_0 + \lam(\gamma_1-r_1)) + (\gamma_1-1)(1-\lam)^{r_2\beta_1+r_1}\bigg) \bigg)^{-1}r \\
=& \frac{(\gamma_1-r_1)(\gamma_1-r_2)}{r_1r_2}r,
\end{aligned}$$
and
$$\begin{aligned}
&\lim\limits_{\lam\rightarrow0} \lim_{h\rightarrow+\infty}\frac{\pi^*(x_{\text{lavs}}(h),h)}{x_{\text{lavs}}(h)}\\
=&\lim\limits_{\lam\rightarrow0} \frac{2r(1-\lam)^{-r_2\beta_1}}{\mu-r} \times \bigg(1 - \frac{\gamma_1(1-\lam)^{-r_2\beta_1}}{(\gamma_1-r_1)(\gamma_1-r_2)} \bigg( w_0^{r_2(\beta_1-1)} (r_2w_0 + \lam(\gamma_1-r_1))\\
& + (\gamma_1-1)(1-\lam)^{r_2\beta_1+r_1}\bigg) \bigg)^{-1} \times \frac{ w_0^{r_2(\beta_1-1)} (r_2w_0 + \lam(\gamma_1-r_1)) + (\gamma_1-1)(1-\lam)^{r_2\beta_1+r_1} }{(\gamma_1-r_1)(\gamma_1-r_2)} \\
=& \frac{2r(\gamma_1-1)}{(\mu-r)r_1r_2}
= \frac{2r(\gamma_1-1)}{-\frac{2r}{\kappa^2}(\mu-r)}
= \frac{\mu-r}{\sigma^2(1-\beta_1)},
\end{aligned}$$
which complete the proof.
\end{proof}

\subsection{Proof of Corollary \ref{thm: long_run}}
\begin{proof}
Let us consider the auxiliary process $Y_t^*:= Y_t(y^*)$ and $H_t^*$ defined in Theorem \ref{thm: beta<1}.

(i) The long-run fraction of time that the agent stays in the region $\{x_\mathrm{aggr}(H^*_t)\leq X^*_t \leq x_\mathrm{lavs}(H^*_t)\}$ can be computed by
$$
\begin{aligned}
&\lim\limits_{T\rightarrow+\infty}\frac{1}{T}\E\bigg[\int_0^T \mathbf{1}_{\{x_\mathrm{aggr}(H_t^*) < X_t \leq x_\mathrm{lavs}(H_t^*)\}}dt\bigg] \\
=&\lim\limits_{T\rightarrow+\infty}\frac{1}{T}\E\bigg[\int_0^T\mathbf{1}_{\{ y_3(H_t^*) \leq Y_t(y^*) < y_2(H_t^*) \}}dt\bigg]\\
=& \lim\limits_{T\rightarrow+\infty}\frac{1}{T}\E\bigg[\int_0^T\mathbf{1}_{\{\inf\limits_{s\leq t}Y_s(y^*)\leq  Y_t(y^*)< \lim\limits_{h\rightarrow+\infty} \frac{y_2(h)}{y_3(h)}\inf\limits_{s\leq t}Y_s(y^*) \}}dt\bigg] \\
=& 1-\lim\limits_{h\rightarrow+\infty} \frac{y_3(h)}{y_2(h)},
\end{aligned}
$$
where the last equation holds by the same argument to prove Theorem 5.1 in \cite{GuasoniHubermanR2020MFF}. 

(ii) The long-run fraction of time that the agent stays in the region $\{0\leq X^*_t\leq x_\mathrm{zero}(H^*_t)\}$ can be computed by
$$
\begin{aligned}
&\lim\limits_{T\rightarrow+\infty}\frac{1}{T}\E\bigg[\int_0^T \mathbf{1}_{\{ X_t < x_\mathrm{zero}(H_t^*)\}}dt\bigg] \\
=&1 - \lim\limits_{T\rightarrow+\infty}\frac{1}{T}\E\bigg[\int_0^T \mathbf{1}_{\{x_\mathrm{zero}(H_t^*)\leq X_t \leq x_\mathrm{lavs}(H_t^*)\}}dt\bigg] \\
=&1 - \lim\limits_{T\rightarrow+\infty}\frac{1}{T}\E\bigg[\int_0^T\mathbf{1}_{\{ y_3(H_t^*) \leq Y_t(y^*)\leq y_1(H_t^*) \}}dt\bigg]. \\
=&1 - \lim\limits_{T\rightarrow+\infty}\frac{1}{T}\E\bigg[\int_0^T\mathbf{1}_{\{\inf\limits_{s\leq t}Y_s(y^*)\leq  Y_t(y^*)\leq \lim\limits_{h\rightarrow+\infty} \frac{y_1(h)}{y_3(h)}\cdot \inf\limits_{s\leq t}Y_s(y^*) \}}dt\bigg] \\
=&\lim\limits_{h\rightarrow+\infty} \frac{y_3(h)}{y_1(h)}.
\end{aligned}
$$

(iii) Let $\tv(y,h)$ be the solution to the following PDE:
$$
\begin{aligned}
\frac{\kappa^2}{2}y^2\tv_{yy}(y,h) - \frac{\kappa^2}{2}y\tv_y(y,h) = -1, ~~\mathrm{for}~~(y,h)\in\Omega, \\
\tv(y_1(h), h) = 0,~~ \tv_h(y_3(h), h) = 0,
\end{aligned}
$$
where $\Omega = \{(y,h)\in\mathbb{R}_+^2: y_3(h) \leq y \leq y_1(h)\}$. It holds that
$\tv(y,h) = \overline{C}_1(h)y^2 + \overline{C}_2(h) + \frac{\log y}{\kappa^2}$,
where $\overline{C}_1(h)$ and $\overline{C}_2(h)$ satisfy
$$
\begin{aligned}
&\overline{C}_1(h)y_1(h)^2 + \overline{C}_2(h) + \frac{\log y_1(h)}{\kappa^2} = 0,\\
&\overline{C}_1'(h)y_3(h)^{2} + \overline{C}_2'(h) = 0.
\end{aligned}
$$
Applying It$\hat{\mathrm{o}}$'s formula to $\tv(Y_t(y^*), H_t^*)$, and integrating from 0 to $\tau_\mathrm{zero}$, we have that
$$
\begin{aligned}
&\tv(Y_{\tau_\mathrm{zero}}(y^*), H_{\tau_\mathrm{zero}}^*) - \tv(y^*, H_0^*)\\
=& -\tau_\mathrm{zero} - \kappa\int_0^{\tau_\mathrm{zero}} Y_s(y^*)\tv_y(Y_s(y^*), H_s^*)dWs + \int_0^{\tau_\mathrm{zero}} \tv_h(Y_s(y^*), H_s^*)dH_s^*.
\end{aligned}
$$
Note that $\tv(Y_{\tau_\mathrm{zero}}(y^*), H_{\tau_\mathrm{zero}}^*) = 0$, the stochastic integral is square-integrable and thus a martingale with zero mean, and $H_t^*$ only increases when $\tv_h(Y_s(y^*), H_s^*)=0$, implying $\int_0^{\tau_\mathrm{zero}} \tv_h(Y_s(y^*), H_s^*)dH_s^* = 0$.
Together with the fact that $y^* = f(x,h)$, we can finally deduce that
$\E[\tau_\mathrm{zero}] = \tv(f(x,h), h) = \overline{C}_1(h)f(x,h)^2 + \overline{C}_2(h) + \frac{\log f(x,h)}{\kappa^2}$.

(iv) Before time $\tau_\mathrm{lavs}$, the historical consumption peak $H_t^* = h$ does not increase, and
$$\{Y_t(y^*) \leq y_3(h) \} = \bigg\{ -\kappa W_t - \frac{\kappa^2}{2}t \leq -\log\bigg( \frac{y^*h^{1-\beta_1}}{(1-\lam)^{\beta_1}} \bigg) \bigg\}.$$
Then, by equation (9.1) in \cite{RogersWilliams1994book}, let $b=\frac{1}{\kappa}\log\big(\frac{y^*h^{1-\beta_1}}{(1-\lam)^{\beta_1}}\big)$, $c = \frac{\kappa}{2}$, $\beta = \sqrt{c^2+2\nu}-c$, it follows that for any $\nu>0$:
$\E[e^{-\nu\tau_\mathrm{lavs}}] = e^{-b\beta}$.
Then, it holds that
$$
\E[\tau_\mathrm{lavs}] = -\frac{d\E[e^{{\cblue-}\nu\tau_\mathrm{lavs}}]}{d\nu}\bigg|_{\nu\downarrow0} = \frac{b}{c}= \frac{2}{\kappa^2}\log\bigg(\frac{y^*h^{1-\beta_1}}{(1-\lam)^{\beta_1}}\bigg) =  \frac{2}{\kappa^2}\log\bigg(\frac{f(x,h)h^{1-\beta_1}}{(1-\lam)^{\beta_1}}\bigg).
$$

\end{proof}

\section*{Acknowledgements}
{We thank two anonymous referees for their helpful comments on the presentation of this paper. X. Li is partially supported by the Hong Kong General Research Fund under grants 15215319, 15216720 and 15221621. X. Yu is supported by the Hong Kong Polytechnic University research grant under no. P0031417.}

\ \\


\begin{thebibliography}{10}
{\small
\bibitem{Abel1990AER}
A.~B. Abel.
\newblock Asset prices under habit formation and catching up with the joneses.
\newblock {\em The American Economic Review}, 80(2):38--42, 1990.

\bibitem{AngBay}
B. Angoshtari, E. Bayraktar and V. Young.
\newblock Optimal dividend distribution under drawdown and ratcheting constraints on dividend rates.
\newblock {\em SIAM Journal on Financial Mathematics}, 10(2):547-577, 2019.

\bibitem{Arun2020arXiv}
T.~Arun.
\newblock The {M}erton problem with a drawdown constraint on consumption.
\newblock {\em Preprint, arXiv:1210.5205}, 2012.


\bibitem{BerklKouwbgPost2004RES}
A.~B. Berkelaar, R.~Kouwenberg, and T.~Post.
\newblock Optimal portfolio choice under loss aversion.
\newblock {\em Review of Economics and Statistics}, 86(4):973--987, 2004.

\bibitem{BLY2021SICON}
L.~Bo, H.~Liao and X.~Yu.
\newblock Optimal tracking portfolio with a ratcheting capital benchmark.
\newblock {\em SIAM Journal on Control and Optimization}, 59(3):2346--2380,
  2021.


\bibitem{Constantinides1990JPE}
G.~M. Constantinides.
\newblock Habit formation: A resolution of the equity premium puzzle.
\newblock {\em Journal of Political Economy}, 98(3):519--543, 1990.

\bibitem{Curatola2017QREF}
G.~Curatola.
\newblock Optimal portfolio choice with loss aversion over consumption.
\newblock {\em The Quarterly Review of Economics and Finance}, 66:345--358,
  2017.

\bibitem{DengLiPY2020arXiv}
S. Deng, X. Li, H. Pham and X. Yu.
\newblock Optimal consumption with reference to past spending maximum.
\newblock {\em Finance and Stochastics}, 26:217-266, 2022.

\bibitem{DetempleKaratzas2003JET}
J.~Detemple and I.~Karatzas.
\newblock Non-addictive habits: Optimal consumption-portfolio policies.
\newblock {\em Journal of Economic Theory}, 113:265--285, 2003.

\bibitem{DetempleZapatero1992MF}
J.~Detemple and F.~Zapatero.
\newblock Optimal consumption-portfolio policies with habit formation.
\newblock {\em Mathematical Finance}, 2(4):251--274, 1992.

\bibitem{DongZheng2020EJOR}
Y.~Dong and H.~Zheng.
\newblock Optimal investment with S-shaped utility and trading and value at
  risk constraints: An application to defined contribution pension plan.
\newblock {\em European Journal of Operational Research}, 281:341--356, 2020.


\bibitem{Dyb1995}
P. H. Dybvig.
\newblock Dusenberry's ratcheting of consumption: Optimal dynamic consumption and investment given intolerance for any decline in standard of living.
\newblock {\em The Review of Economic Studies}, 62(2):287--313, 1995.


\bibitem{EnglezosKaratzas2009Sicon}
N.~Englezos and I.~Karatzas.
\newblock Utility maximization with habit formation: Dynamic programming and
  stochastic {PDE}s.
\newblock {\em SIAM Journal on Control and Optimization}, 48(2):481--520, 2009.

\bibitem{GuasoniHubermanR2020MFF}
P.~Guasoni, G.~Huberman and D.~Ren.
\newblock Shortfall aversion.
\newblock {\em Mathematical Finance}, 30(3):869-920, 2020.

\bibitem{HeStrub2019Preprint}
X.~He and M.~Strub.
\newblock How endogenization of the reference point affects loss aversion: a study of portfolio selection.
\newblock {\em Operations Research}, 70(6):3035-3053, 2022.

\bibitem{HeYang2019MF}
X.~He and L.~Yang.
\newblock Realization utility with adaptive reference points.
\newblock {\em Mathematical Finance}, 29(2):409--447, 2019.

\bibitem{HeZhou2011MS}
X.~He and X.Y. Zhou.
\newblock Portfolio choice under cumulative prospect theory: An analytical
  treatment.
\newblock {\em Management Science}, 57(2):315--331, 2011.

\bibitem{HeZhou2014QF}
X.~He and X.Y. Zhou.
\newblock Myopic loss aversion, reference point, and money illusion.
\newblock {\em Quantitative Finance}, 14(9):1541--1554, 2014.

\bibitem{KLSX}
I. Karatzas, J. P. Lehoczky, S. E. Shreve and  G. L. Xu.
\newblock Martingale and duality methods for utility maximization in an incomplete market.
\newblock {\em SIAM Journal on Control and Optimization}, 29:702-730, 1991.


\bibitem{LYZ23}
X. Li, X. Yu and Q. Zhang.
\newblock Optimal consumption and life insurance under shortfall aversion and a drawdown constraint.
\newblock {\em Insurance: Mathematics and Economics}, 108:25--45, 2023

\bibitem{JinZhou2008MF}
H.~Jin and X.Y. Zhou.
\newblock Behavioral portfolio selection in continuous time.
\newblock {\em Mathematical Finance}, 18(3):385--426, 2008.

\bibitem{KahnemanTversky2013Book}
D.~Kahneman and A.~Tversky.
\newblock Prospect theory: An analysis of decision under risk.
\newblock In {\em Handbook of the fundamentals of financial decision making:
  Part I}, pages 99--127. World Scientific, 2013.


\bibitem{Mehra}
R. Mehra and E. C. Prescott.
\newblock The equity premium: A puzzle.
\newblock {\em Journal of Monetary Economics}, 15(2):145-161, 1969.

\bibitem{Mert1969RES}
R.~C. Merton.
\newblock Lifetime portfolio selection under uncertainty: The continuous time case.
\newblock {\em The Review of Economics and Statistics}, 51(3):247--257, 1969.



\bibitem{Mert1971JET}
R.~C. Merton.
\newblock Optimal consumption and portfolio rules in a continuous-time model.
\newblock {\em Journal of Economic Theory}, 3:373--413, 1971.

\bibitem{Reichlin2013MFE}
C.~Reichlin.
\newblock Utility maximization with a given pricing measure when the utility is
  not necessarily concave.
\newblock {\em Mathematics and Financial Economics}, 7(4):531--556, 2013.

\bibitem{RogersWilliams1994book}
L. C. G. Rogers and D. Williams.
\newblock Diffusions, markov processes and martingales, Volume 1: Foundations.
\newblock {\em John Wiley \& Sons, Ltd., Chichester}, 7, 1994.

\bibitem{SchroderSkiadas2002RFS}
M.~Schroder and C.~Skiadas.
\newblock An isomorphism between asset pricing models with and without linear
  habit formation.
\newblock {\em The Review of Financial Studies}, 15(4):1189--1221, 2002.

\bibitem{BilsenLaevenN2020MS}
S.~van Bilsen, R.~Laeven and E.~Nijman.
\newblock Consumption and portfolio choice under loss aversion and endogenous
  updating of the reference level.
\newblock {\em Management Science}, 66(9):3799--4358, 2020.

\bibitem{Yu2022}
Y. Yang and X.~Yu.
\newblock Optimal entry and consumption under habit formation.
\newblock {\em Advances in Applied Probability}, 54(2):433--459, 2022.

\bibitem{Yu2015AoAP}
X.~Yu.
\newblock Utility maximization with addictive consumption habit formation in
  incomplete semimartingale markets.
\newblock {\em Annals of Applied Probability}, 25(3):1383--1419, 2015.

\bibitem{Yu2017AoAP}
X.~Yu.
\newblock Optimal consumption under habit formation in markets with transaction
  costs and random endowments.
\newblock {\em Annals of Applied Probability}, 27(2):960--1002, 2017.
}
\end{thebibliography}
\end{document}